\newtheorem{theorem}{Theorem}[section]
\newtheorem{lemma}[theorem]{Lemma}
\newtheorem{corollary}[theorem]{Corollary}
\newtheorem{proposition}[theorem]{Proposition}
\theoremstyle{definition}
\newtheorem{definition}[theorem]{Definition}
\newtheorem{example}[theorem]{Example}
\theoremstyle{remark}
\newtheorem{remark}[theorem]{Remark}
 \DeclareMathOperator{\Hom}{Hom}
\newcommand{\lten}{\overset{{\mathbb{L}}}{\otimes}}
\newcommand{\RHom}{\mathrm{\mathbb{R}Hom}}
\newcommand{\Dcal}{\ensuremath{\mathcal{D}}}
\newcommand{\Xcal}{\ensuremath{\mathcal{X}}}
\newcommand{\Ycal}{\ensuremath{\mathcal{Y}}}
\newcommand{\Tcal}{\ensuremath{\mathcal{T}}}
\newcommand{\Fcal}{\ensuremath{\mathcal{F}}}
\newcommand{\Rcal}{\ensuremath{\mathcal{R}}}
\newcommand{\Ccal}{\ensuremath{\mathcal{C}}}
\newcommand{\Acal}{\ensuremath{\mathcal{A}}}
\newcommand{\Kcal}{\ensuremath{\mathcal{K}}}
\newcommand{\Kbb}{\ensuremath{\mathbb{K}}}
\newcommand{\Zbb}{\ensuremath{\mathbb{Z}}}
\newcommand{\wt}{\widetilde}
\newcommand{\thick}{\mathrm{thick}}
\newcommand{\K}{\mathbb{K}}
\newcommand{\Z}{\mathbb{Z}}
\newcommand{\ra}{\rightarrow}
\numberwithin{equation}{section}
\begin{document}
\bibliographystyle{amsplain}
\title{Glueing silting objects}
\author{Qunhua Liu, Jorge Vit{\'o}ria, Dong Yang}\thanks{The second respectively third named authors were supported by DFG - SPP Darstellungstheorie 1388 KO1281/7-1 respectively KO1281/9-1.}
\thanks{The authors would like to thank David Pauksztello for helpful remarks and thank David Pauksztello and Lutz Hille for a discussion which led to example \ref{example complete to tilting}.}
\thanks{2010 Mathematical Subject Classification: 16E35, 18E30.}
\address{Qunhua Liu, Institute of Mathematics, School of Mathematical Sciences, Nanjing Normal University, Nanjing 210023, P.R.China} \email{05402@njnu.edu.cn}
\address{Dong Yang, Department of Mathematics, Nanjing University, 22 Hankou Road, Nanjing 210093, P. R. China}
\email{dongyang2002@gmail.com}
\address{Jorge Vit{\'o}ria, Department of computer science - sector of mathematics, Universit\`a degli Studi di Verona, Strada le Grazie 15 - Ca' Vignal, I-37134 Verona, Italy} 
\email{jorge.vitoria@univr.it}


\begin{abstract}
Recent results by Keller and Nicol{\'a}s and by Koenig and Yang have shown bijective correspondences between suitable classes of t-structures and co-t-structures with certain objects of the derived category: silting objects. On the other hand, the techniques of glueing (co-)t-structures along a recollement play an important role in the understanding of derived module categories. Using the above correspondence with silting objects we present explicit constructions of glueing of silting objects, and, furthermore, we answer the question of when the glued silting is tilting.
\end{abstract}
\maketitle
\section{Introduction}

In a given triangulated category, the study of its torsion pairs helps to understand its structure. Two kinds of torsion pairs have been considered with particular emphasis in the literature. These are the notions of t-structure (introduced by Beilinson, Bernstein and Deligne in \cite{BeilinsonBernsteinDeligne82}) and co-t-structure (introduced independently by Bondarko in \cite{Bondarko10} and Pauksztello in \cite{Pauksztello08}). These are torsion pairs with an additional property concerning the suspension functor of the underlying triangulated category and they give rise to additive (or even abelian in the case of t-structures) subcategories which are of interest. In this paper, we work with correspondences that classify these torsion pairs in terms of objects of the triangulated category.

Keller and Vossieck established in \cite{KellerVossieck88} a bijection between bounded t-structures and equivalence classes of silting objects in the bounded derived category of modules over the path algebra of a Dynkin quiver over a field. Recently, this bijection has been extended by Keller and Nicol\'as in \cite{KellerNicolas11} for the bounded derived categories of homologically homologically smooth non-positive differential graded algebras and by Koenig and Yang in \cite{KoenigYang12} for bounded derived categories of finite dimensional algebras over a field. 
Indeed, they show that in such a category, there is a bijection between silting objects and bounded t-structures whose hearts are length categories.
It turns out that Keller and Vossieck's result is a corollary, since for algebras of finite representation type all hearts of bounded t-structures are length categories. 
A new correspondence between silting objects and bounded co-t-structures was proved in \cite{MendozaSaenzSantiagoSouto10,KellerNicolas11}. This bijection will be central in our approach.

Silting objects play, thus, a more general role than tilting objects. They describe all hearts which are length categories and these turn out to be precisely those which are module categories over some finite dimensional algebra over a field. This algebra, although not in general derived equivalent to the one we started with, is obtained as the endomorphism algebra of the silting object, just like in the tilting setting. Indeed, it is easy to observe that a silting object is tilting if and only if it lies in the heart of the corresponding t-structure. 

Glueing techniques with respect to a recollement, due to Beilinson, Bernstein and Deligne, have been intensively studied in \cite{BeilinsonBernsteinDeligne82} for t-structures and, recently, in \cite{Bondarko10} for co-t-structures.  This leads to the natural question of how to glue silting objects and which silting objects are glued from \textit{smaller} ones. Indeed, recent work by the two first authors has shown that, in the piecewise hereditary case, all bounded t-structures whose heart is a length category are glued with respect to a non-trivial recollement \cite{LiuVitoria12}. In this setting it is then clear that every silting object can be decomposed by this process into as many pieces as derived simple factors of the algebra (check \cite{AngeleriKoenigLiu12a} and \cite{AngeleriKoenigLiu12} for terminology). It turns out, however, that an answer to the problem of glueing silting objects can be given more easily when the focus is on co-t-structures rather than on t-structures. Our main result is as follows.
\\

\noindent\textbf{Theorem} (Theorem \ref{glue silting})
\textit{Let $\Rcal$ be a recollement of a triangulated category $\Dcal$ of the form 
\begin{equation*}
\begin{xymatrix}{\mathcal{Y}\ar[r]^{i_*}&\mathcal{D}\ar@<3ex>[l]_{i^!}\ar@<-3ex>[l]_{i^*}\ar[r]^{j^*}&\mathcal{\mathcal{X}}\ar@<3ex>_{j_*}[l]\ar@<-3ex>_{j_!}[l]}.
\end{xymatrix}
\end{equation*}
 Let $X$ and $Y$ be respectively silting objects of $\Xcal$ and $\Ycal$ and $(\Xcal_{\geq 0},\Xcal_{\leq 0})$ and $(\Ycal_{\geq 0},\Ycal_{\leq 0})$ be respectively the associated co-$t$-structure in $\Xcal$ and $\Ycal$. Then the induced co-t-structure $(\Dcal_{\geq 0},\Dcal_{\leq 0})$ in $\Dcal$ is associated with the silting object $Z=i_*Y\oplus K_X$ in $\Dcal$, with $K_X$ defined by the triangle
\begin{equation}\nonumber
i_*\beta_{\geq 1}i^!j_!X\longrightarrow j_!X\longrightarrow K_X\longrightarrow (i_*\beta_{\geq 1}i^!j_!X)[1],
\end{equation}
where $\beta_{\geq 1}$ is a (non-functorial) choice of truncation for the co-t-structure $(\Ycal_{\geq 0},\Ycal_{\leq 0})$ in $\Ycal$.
}
\\

Furthermore, the question of when we can  glue derived equivalences, i.e., tilting objects, comes as a particular setting of the general context of glueing silting. Similar constructions of tilting objects have been discussed in \cite{Ladkani11} and \cite{AngeleriKoenigLiu11a}. In particular, we will show that the construction in \cite{AngeleriKoenigLiu11a} is a particular case of the construction above. The following is our main theorem concerning tilting. 
\\

\textbf{Theorem} (Theorem \ref{conditions tilting})
\textit{Let $\Rcal$ be a recollement of $\Dcal=\Dcal^b(R)$ by  $\Xcal=\Dcal^b(C)$ and $\Ycal=\Dcal^b(B)$, where $R$, $C$ and $B$ are finite-dimensional algebras over a field of finite global dimension. Let $X$ and $Y$ be tilting objects of $\Xcal$ and $\Ycal$, respectively. Then $Z=i_*Y\oplus K_X$ is tilting in $\Dcal$ if and only if the following conditions hold.
\begin{enumerate}
\item[(a)] $\Hom_{\Ycal}(Y,i^*j_*X[k])=0$ for all $k<-1$;
\item[(b)] $\Hom_{\Ycal}(i^*j_*X,Y[k])=0$ for all $k<0$;
\item[(c)] $\Hom_{\Ycal}(i^*j_*X,i^*j_*X[k])=0$ for all $k<-1$.\\
\end{enumerate}}

This paper is structured as follows. In the next section we discuss some preliminary results on recollements, t-structures, co-t-structures and silting needed for the later sections. In section 3 we show how to glue silting and we use this in section 4 to give necessary and sufficient conditions for the glued silting to be tilting. These conditions are particularly nice in the hereditary case and they are made explicit in section 5. Finally, in section 6, we make a few observations on the glueing of HRS-tilts with view towards a comment on the compatibility of silting mutation with glueing.

\section{Preliminaries}
Throughout, $\mathbb{K}$ denotes a fixed field and $R$, $B$ and $C$ finite dimensional $\Kbb$-algebras.  The bounded derived category of finitely generated right $R$-modules and the bounded homotopy category of finitely generated projective right $R$-modules  will be denoted by, respectively, $\Dcal^b(R)$ and $\Kcal^b(proj\mbox{-}R)$. The symbols $\Xcal$, $\Ycal$ and $\Dcal$ denote triangulated categories.
\subsection{Recollements}
A \textbf{recollement} of $\Dcal$ by $\Xcal$ and $\Ycal$ is a diagram of six triangle
functors
\begin{equation}\label{recollement}
\begin{xymatrix}{\mathcal{Y}\ar[r]^{i_*}&\mathcal{D}\ar@<3ex>[l]_{i^!}\ar@<-3ex>[l]_{i^*}\ar[r]^{j^*}&\mathcal{\mathcal{X}}\ar@<3ex>_{j_*}[l]\ar@<-3ex>_{j_!}[l]}.
\end{xymatrix}
\end{equation}
satisfying the following properties:
\begin{enumerate}
\item $(i^\ast,i_\ast)$,\,$(i_*,i^!)$,\,$(j_!,j^*)$ ,\,$(j^\ast,j_\ast)$
are adjoint pairs;

\item $i_\ast,\,j_\ast,\,j_!$  are full embeddings;

\item  $i^!\circ j_\ast=0$ (and thus also $j^*\circ i_*=0$ and
$i^\ast\circ j_!=0$);

\item for each $Z\in \Dcal$ the units and counits of the adjunctions yield triangles \[i_* i^!Z\to
Z\to j_\ast j^\ast Z\to i_* i^!Z[1]\]
\[j_! j^* Z\to Z\to
i_\ast i^\ast Z\to j_!j^*Z[1].\]
\end{enumerate}

The following result allows us to change the sides of a recollement.
\begin{theorem} \emph{(\cite[propositions 3.6 and 3.7]{BondalKapranov89}, \cite[proposition 5 and theorem 7]{Jorgensen10})}
\label{reflect}
Let $\Rcal$ be a recollement of $\Dcal$ of the form (\ref{recollement}). If $\Dcal$ has a Serre functor $S$, then both $\Xcal$ and $\Ycal$ have Serre functors ($S_\Xcal$ and $S_\Ycal$ respectively) and there are reflected recollements $\Rcal_U$ and $\Rcal_L$ (the upper and the lower reflection, respectively)
\begin{equation}\nonumber
\Rcal_U: \begin{xymatrix}{\mathcal{X}\ar[r]^{j_!}&\mathcal{D}\ar@<3ex>[l]_{j^*}\ar@<-3ex>[l]_{j^\#}\ar[r]^{i^*}&\mathcal{\mathcal{Y}}\ar@<3ex>_{i_*}[l]\ar@<-3ex>_{i_\#}[l]},
\end{xymatrix}
\end{equation}
where $i_\#=S^{-1}i_*S_\Ycal$ and $j^\#=S_\Xcal^{-1}j^*S$ are left adjoints of $i^*$ and $j_!$ respectively, and
\begin{equation}\nonumber
\Rcal_L: \begin{xymatrix}{\mathcal{X}\ar[r]^{j_*}&\mathcal{D}\ar@<3ex>[l]_{j^+}\ar@<-3ex>[l]_{j^*}\ar[r]^{i^!}&\mathcal{\mathcal{Y}}\ar@<3ex>_{i_+}[l]\ar@<-3ex>_{i_*}[l]}.
\end{xymatrix}
\end{equation}
where $i_+=Si_*S_\Ycal^{-1}$ and $j^+=S_\Xcal j^*S^{-1}$ are right adjoints of $i^!$ and $j_*$ respectively.
\end{theorem}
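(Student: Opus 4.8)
The plan is to read Theorem~\ref{reflect} as the combination of two standard facts: an admissible subcategory of a triangulated category carrying a Serre functor inherits one, and the whole datum of a recollement can be transported along the ambient Serre functor $S$. Throughout write $D=\Hom_\Kbb(-,\Kbb)$, so a Serre functor is an autoequivalence $S$ with a natural isomorphism $\Hom(A,B)\cong D\Hom(B,SA)$. First I would record the dictionary between $\Rcal$ and a semiorthogonal decomposition of $\Dcal$: by axioms~(1)--(3) together with $j^*j_!\cong\mathrm{id}\cong j^*j_*$, the essential images $i_*\Ycal$ and $j_!\Xcal$ (equivalently $j_*\Xcal$) are admissible subcategories of $\Dcal$ --- each inclusion admits a left and a right adjoint among the recollement functors --- and $j_!\Xcal={}^\perp(i_*\Ycal)$, $j_*\Xcal=(i_*\Ycal)^\perp$, $i_*\Ycal=(j_!\Xcal)^\perp={}^\perp(j_*\Xcal)$. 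The only feature of $S$ that I would use is that it interchanges left and right orthogonals, $S({}^\perp\Ccal)=\Ccal^\perp$ for every full triangulated subcategory $\Ccal$ (immediate from the duality formula); in particular $S$ restricts to an equivalence $j_!\Xcal\xrightarrow{\ \sim\ }j_*\Xcal$, whereas it does \emph{not} preserve $i_*\Ycal$ in general.

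Next I would produce the Serre functors of $\Xcal$ and $\Ycal$. Put $S_\Xcal:=j^*Sj_!$ and $S_\Ycal:=i^!Si_*$; the Serre duality formula for each is obtained by chaining full faithfulness of $j_!$ (resp. $i_*$), Serre duality in $\Dcal$, and the adjunctions of $\Rcal$, e.g.\ $\Hom_\Xcal(A,B)\cong\Hom_\Dcal(j_!A,j_!B)\cong D\Hom_\Dcal(j_!B,Sj_!A)\cong D\Hom_\Xcal(B,j^*Sj_!A)$. That $S_\Xcal$ is an autoequivalence is transparent: it is the composite of three equivalences $\Xcal\xrightarrow{j_!}j_!\Xcal\xrightarrow{S}j_*\Xcal\xrightarrow{j^*}\Xcal$ (using $S(j_!\Xcal)=j_*\Xcal$ and $j^*j_*\cong\mathrm{id}$). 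For $S_\Ycal$ this is genuinely more delicate, since $S$ does not preserve $i_*\Ycal$; here I would invoke the general construction of the Serre functor inherited by an admissible subcategory, which also provides the inverse $S_\Ycal^{-1}=i^*S^{-1}i_*$: composing $S_\Ycal$ with $i^*S^{-1}i_*$ in either order, replacing $i_*i^*$ resp.\ $i_*i^!$ by the relevant semiorthogonal projection triangle, and using $S({}^\perp\Ccal)=\Ccal^\perp$ together with the automatic vanishing of $i^!$ on $(i_*\Ycal)^\perp$ and of $i^*$ on ${}^\perp(i_*\Ycal)$, collapses the composite to the identity. Both $S_\Xcal$ and $S_\Ycal$ are triangulated --- either by the general fact that a Serre functor is automatically exact, or because they are composites of triangle functors with $S^{\pm1}$.

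I would then assemble the reflected recollements, defining the new functors by the stated formulas $i_\#=S^{-1}i_*S_\Ycal$, $j^\#=S_\Xcal^{-1}j^*S$, $i_+=Si_*S_\Ycal^{-1}$, $j^+=S_\Xcal j^*S^{-1}$ --- every other slot being filled by a functor already in $\Rcal$ --- and checking the four recollement axioms. Axiom~(2) is immediate: each diagram reuses $j_!$ (resp. $j_*$) and $i_*$ as two of its three full embeddings, and the third, $i_\#$ (for $\Rcal_U$) or $i_+$ (for $\Rcal_L$), is in either case $i_*$ composed on both sides with autoequivalences. Axiom~(1) is the heart of the matter but a one-liner in each case, via the fact that if $G$ is right adjoint to $F\colon A\to B$ then $S_A^{-1}GS_B$ is left adjoint to $F$: sandwiching the Serre duality isomorphisms of $\Dcal$ and of $\Xcal$ or $\Ycal$ around the adjunctions of $\Rcal$ yields precisely the adjoint pairs $(j^\#,j_!)$, $(i_\#,i^*)$ of $\Rcal_U$ and $(j_*,j^+)$, $(i^!,i_+)$ of $\Rcal_L$, the remaining two pairs being inherited from $\Rcal$. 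Axiom~(3), after relabelling, reduces entirely to $j^*i_*=0$ and its equivalent forms, which hold in $\Rcal$. Axiom~(4): for $\Rcal_U$, one gluing triangle is, up to relabelling, the second triangle $j_!j^*Z\to Z\to i_*i^*Z\to$ of $\Rcal$; the other is obtained by applying $S^{-1}$ to the first triangle of $\Rcal$ at $SZ$, namely $i_*i^!(SZ)\to SZ\to j_*j^*(SZ)\to$, after identifying $S^{-1}i_*i^!S\cong i_\#i^*$ (because the unit $\mathrm{id}\Rightarrow i_*i^*$ becomes invertible after $i^!S$, its cone $j_!j^*(-)[1]$ being annihilated by $i^!S$ since $S(j_!\Xcal)\subseteq j_*\Xcal$ and $i^!j_*=0$) and $S^{-1}j_*j^*S\cong j_!j^\#$ (from $Sj_!\cong j_*S_\Xcal$, which holds because $S(j_!\Xcal)\subseteq j_*\Xcal$ forces $Sj_!$ to factor through $j_*$ with factor $j^*Sj_!=S_\Xcal$). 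For $\Rcal_L$ one argues symmetrically, applying $S$ in place of $S^{-1}$ and using the second triangle of $\Rcal$ at $S^{-1}Z$ together with the first triangle of $\Rcal$ directly.

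The conceptual content being light, the work --- and where I expect the effort to lie --- is twofold. The first part is bookkeeping: tracking, for each of the two reflections, which functor plays which role in the definition of a recollement, and then checking that axiom~(4) holds with the \emph{canonical} connecting maps (the honest units and counits of the adjunctions just constructed), which forces one to check that natural isomorphisms such as $Sj_!\cong j_*S_\Xcal$ and $S^{-1}i_*i^!S\cong i_\#i^*$ are compatible with the relevant (co)units. The second, and the only genuinely non-combinatorial, part is establishing that $S_\Ycal=i^!Si_*$ is a \emph{two-sided} equivalence: since $S$ carries $i_*\Ycal={}^\perp(j_*\Xcal)$ to $(j_*\Xcal)^\perp$, and these differ precisely when $i^*j_*\neq0$, the invertibility of $S_\Ycal$ is not visible from its formula and must be extracted from the semiorthogonal projection triangles and the vanishing of $i^!$, $i^*$ on the appropriate orthogonals as above. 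Once these are settled, everything else is transport of structure along $S$, and the explicit forms of $i_\#$, $j^\#$, $i_+$, $j^+$ fall out of the computation in axiom~(1).
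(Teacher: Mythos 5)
Your argument is correct, but note that the paper offers no proof of this statement at all: it is imported verbatim from \cite[propositions 3.6 and 3.7]{BondalKapranov89} and \cite[proposition 5 and theorem 7]{Jorgensen10}, and your reconstruction follows essentially the same route as those sources (Serre functors of admissible subcategories via $S_\Xcal=j^*Sj_!$, $S_\Ycal=i^!Si_*$ with inverses obtained from the semiorthogonal projection triangles, then transport of the adjunctions and gluing triangles along $S$). The only quibble is the parenthetical claim that \emph{each} of $i_*\Ycal$ and $j_!\Xcal$ admits both adjoints to its inclusion among the original recollement functors --- $j_!\Xcal$ is only right admissible a priori --- but nothing in your argument actually uses the stronger claim.
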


\subsection{t-structures and co-t-structures}
A \textbf{torsion pair} in $\Dcal$ is a pair of strict full subcategories $(\Dcal',\Dcal'')$, closed under taking direct summands and direct sums, such that
\begin{enumerate}
\item $\Hom_{\Dcal}(X,Y)=0$ for all $X\in \Dcal'$ and $Y\in\Dcal''$;
\item for each $Z$ in $\Dcal$, there are $X\in\Dcal'$, $Y\in\Dcal''$ and a triangle
\begin{equation}\label{tria torsion pair}
X \ra Z \ra Y \ra X[1].
\end{equation}
\end{enumerate}
By definition, it is easy to see that
\begin{equation}\nonumber
\Dcal' = \{X \in \Dcal: \Hom_{\Dcal}(X,Y) =
0,\ \forall\ Y\in \Dcal'' \}=:\ ^\perp\Dcal'',
\end{equation} 
\begin{equation}\nonumber
\Dcal'' = \{Y \in
\Dcal: \Hom_{\Dcal}(X,Y) = 0,\ \forall\ X\in \Dcal' \}=:\ \Dcal'^{\perp}.~~\ 
\end{equation}

A {\bf t-structure} in $\Dcal$ is a pair $(\Dcal^{\leq 0}, \Dcal^{\geq 0})$  such that $(\Dcal^{\leq 0}, \Dcal^{\geq 1})$ is a torsion pair and $\Dcal^{\leq 0}\subseteq \Dcal^{\leq 1}$, where $\Dcal^{\leq n}:=\Dcal^{\leq 0}[-n]$ and $\Dcal^{\geq n}:=\Dcal^{\geq 0}[-n]$ for $n\in\mathbb{Z}$.
The
subcategory $\Dcal^{\leq 0}$ is called the {\em t-aisle}, and
$\Dcal^{\geq 0}$ is called the \textbf{t-coaisle}. 
  The \textbf{heart}
$\Dcal^{\leq 0} \cap \Dcal^{\geq 0}$ is always an abelian category. We say the heart is $\textbf{length}$ if it has only finitely many simple objects (up to isomorphism) and each object in the heart has finite length. A t-structure is said to be \textbf{bounded} if it satisfies 
$$ \bigcup\limits_{n\in\Zbb} D^{\geq n}= \Dcal = \bigcup\limits_{n\in\Zbb} D^{\leq n}.$$
For all $n\in\Zbb$, there is a right adjoint to the inclusion of the subcategory $\Dcal^{\leq n}$ in $\Dcal$, called \textbf{the truncation at $n$} and denoted by $\tau^{\leq n}$. 
Similarly, there is a left adjoint to the inclusion of $\Dcal^{\geq n}$, denoted by $\tau^{\geq n}$. In fact, the triangle (\ref{tria torsion pair}) can be written functorially, for any $Z\in\Dcal$, as follows
\begin{equation}\nonumber
\tau^{\leq 0}Z \ra Z \ra \tau^{\geq 1}Z \ra (\tau^{\leq 0}Z)[1].
\end{equation}
Moreover, it is possible to define associated cohomological functors 
$$H^i: \Dcal\rightarrow \Dcal^{\leq 0}\cap\Dcal^{\geq 0},\ \ \  H^i(X)=(\tau^{\leq i}\tau^{\geq i}X)[i], \forall X\in\Dcal.$$
If a t-structure is bounded then, for any object $X\in\Dcal$, $H^i(X)=0$ for all but finitely many $i\in\Zbb$. A well-known example is the standard t-structure in $\Dcal^b(R)$.
 The associated cohomological functors are given by the usual cohomology of complexes in $\Dcal^b(R)$.

Similarly, a \textbf{co-t-structure} in $\Dcal$ is a pair  $(\Dcal_{\geq 0}, \Dcal_{\leq 0})$ such that $(\Dcal_{\geq 0},\Dcal_{\leq -1})$ is a torsion pair and $\Dcal_{\geq 0}\subseteq\Dcal_{\geq -1}$, where 
$\Dcal_{\geq n} := \Dcal_{\geq 0}[-n]$ and $\Dcal_{\leq n} :=
\Dcal_{\leq 0}[-n]$, for all $n\in \Z$.
The subcategory $\Dcal_{\geq 0}$ is called the \textbf{co-t-aisle}, and
$\Dcal_{\leq 0}$ is called the \textbf{co-t-coaisle}. The {\bf co-heart}
$\Dcal_{\geq 0} \cap \Dcal_{\leq 0}$ is not, in general, an abelian category, as in the case of t-structures, but it still has the structure of an additive category. A co-t-structure is said to be \textbf{bounded} if it satisfies 
$$ \bigcup\limits_{n\in\Zbb} D_{\geq n}= \Dcal = \bigcup\limits_{n\in\Zbb} D_{\leq n}.$$
Contrarily to t-structures, the triangle obtained from the torsion pair $(\Dcal_{\geq 0},\Dcal_{\leq -1})$ is not functorial. However, a choice of $X\in \Dcal_{\geq 0}$ (respectively, of $Y\in\Dcal_{\leq -1}$) as in triangle (\ref{tria torsion pair}) will be denoted by $\beta_{\geq 0}Z$ (respectively, by $\beta_{\leq -1}Z$). A well-known example (see \cite{Bondarko10}, \cite{Pauksztello08}) is the standard co-t-structure in $\Kcal^b(proj\mbox{-}R)$. 
Here, the non-functorial choices of $\beta_{\geq 0}Z$ or $\beta_{\leq 0}Z$ for some object $Z\in \Kcal^b(proj\mbox{-}R)$ are usually given by the \textbf{stupid truncations}, consisting in setting suitable entries of the complex $Z$ to be zero.

It is known that t-structures and co-t-structures can be glued (or induced) with respect to a recollement (see \cite{BeilinsonBernsteinDeligne82} for t-structures and \cite{Bondarko10} for co-t-structures). The same arguments can be used to prove the following result.
\begin{theorem}\label{glue}
Let  $\Rcal$ be a recollement of $\Dcal$ of the form (\ref{recollement}). Let $(\Xcal',\Xcal'')$ and $(\Ycal',\Ycal'')$ be torsion pairs in $\Xcal$ and $\Ycal$, respectively. 
\begin{enumerate} \item There is a glued torsion pair $(\Dcal',\Dcal'')$ in $\Dcal$ defined by:
\begin{equation}\nonumber
\Dcal'=\left\{Z\in \Dcal: j^*Z\in \Xcal', i^*Z\in\Ycal'\right\},\ \ \ \Dcal''=\left\{Z\in \Dcal: j^*Z\in \Xcal'', i^!Z\in\Ycal''\right\}.
\end{equation}
\item\emph{(\cite[theorem 1.4.10]{BeilinsonBernsteinDeligne82})} If $(\Xcal',\Xcal''[1])$ and $(\Ycal',\Ycal''[1])$ are t-structures, then $(\Dcal',\Dcal''[-1])$  is a t-structure in $\Dcal$;
\item\emph{(\cite[theorem 8.2.3]{Bondarko10})}  If $(\Xcal',\Xcal''[-1])$ and $(\Ycal',\Ycal''[-1])$ are co-t-structures, then $(\Dcal',\Dcal''[1])$ is a co-t-structure in $\Dcal$.
\item\emph{(\cite{Bondarko10}, \cite{LiuVitoria12}, \cite{Wang06})} The glueing of bounded t-structures whose heart is a length category is a bounded t-structure whose heart is a length category. Also, the glueing of bounded co-t-structures is still bounded.
\end{enumerate}
\end{theorem}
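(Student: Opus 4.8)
The plan is to verify the four items in order, relying on the fact that all of them follow from the same circle of ideas already available in the cited literature; the point is that the recollement axioms (1)--(4) are exactly what one needs.

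\emph{Item (1).} First I would check the two defining properties of a torsion pair for $(\Dcal',\Dcal'')$. For the orthogonality, take $Z\in\Dcal'$ and $W\in\Dcal''$; using the triangle $j_!j^*Z\to Z\to i_*i^*Z\to$ from axiom (4) and the adjunctions $(j_!,j^*)$ and $(i^*,i_*)$, one reduces $\Hom_\Dcal(Z,W)$ to a two-term computation: $\Hom_\Xcal(j^*Z,j^*W)=0$ because $(\Xcal',\Xcal'')$ is a torsion pair, and $\Hom_\Ycal(i^*Z,i^!W)=0$ because $(\Ycal',\Ycal'')$ is a torsion pair (here one also uses $j^*i_*=0$ and $i^!j_*=0$ from axiom (3) to kill the cross terms). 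For the decomposition of an arbitrary $Z\in\Dcal$, I would build the approximation triangle in two stages: first truncate $j^*Z$ in $\Xcal$ to get $\Xcal'\to j^*Z\to \Xcal''$, apply $j_!$ and $j_*$ respectively to the two ends, and then correct the $i$-part by truncating an appropriate octahedron-produced object in $\Ycal$ using the torsion pair $(\Ycal',\Ycal'')$. The octahedral axiom then assembles these into a triangle $X\to Z\to Y$ with $X\in\Dcal'$, $Y\in\Dcal''$. Closure under summands and sums (resp. products) is inherited from $\Xcal$, $\Ycal$ and exactness of the six functors. This is the technical heart of the whole theorem; once it is done, items (2)--(4) are essentially bookkeeping.

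\emph{Items (2) and (3).} For the t-structure case, I would observe that the shift conditions translate correctly: applying the shift to the defining conditions, $\Dcal'[1]=\{Z: j^*Z\in\Xcal'[1],\ i^*Z\in\Ycal'[1]\}$ and similarly for $\Dcal''$, so the pair $(\Dcal',\Dcal''[1])$ being a torsion pair (by item (1) applied to the torsion pairs $(\Xcal',\Xcal''[1])$ and $(\Ycal',\Ycal''[1])$) together with the inclusion $\Dcal'\subseteq\Dcal'[-1]$ — which follows entrywise from $\Xcal'\subseteq\Xcal'[-1]$ and $\Ycal'\subseteq\Ycal'[-1]$ — gives the t-structure axioms; this is \cite[theorem 1.4.10]{BeilinsonBernsteinDeligne82}. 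The co-t-structure case is dual in the obvious sense (the required inclusion is now $\Dcal_{\geq 0}\subseteq\Dcal_{\geq -1}$, again checked entrywise), and is \cite[theorem 8.2.3]{Bondarko10}; I would simply indicate that the argument of item (1) specializes.

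\emph{Item (4).} For boundedness of the glued t-structure, I would use that boundedness of $(\Xcal',\Xcal''[1])$ and $(\Ycal',\Ycal''[1])$ gives, for any $Z\in\Dcal$, that $j^*Z$ and $i^*Z$ (equivalently $i^!Z$) lie in $\Xcal^{\leq n}\cap\Xcal^{\geq -n}$ and $\Ycal^{\leq n}\cap\Ycal^{\geq -n}$ for some $n$; since the six functors are triangulated and the glued aisles are defined by pullback along $j^*$ and $i^*$ (resp. $i^!$), $Z$ then lies in $\Dcal^{\leq n'}\cap\Dcal^{\geq -n'}$ for a possibly larger $n'$, using the connecting triangles of axiom (4). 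That the heart is again a length category: the heart $\Hcal_\Dcal$ of the glued t-structure sits in a recollement-type situation relating it to $\Hcal_\Xcal$ and $\Hcal_\Ycal$ (via $i_*$ restricted to the heart being exact and $j^*$ restricted being exact up to the relevant degree), so its simple objects are, up to the action of these functors, either images of simples of $\Hcal_\Ycal$ under $i_*$ or appear in the cohomology of $j_!S$ for $S$ simple in $\Hcal_\Xcal$; finiteness of simples and the finite-length property then transfer — this is the content of \cite{LiuVitoria12} and \cite{Wang06}, which I would cite rather than reprove. Boundedness of the glued co-t-structure is the analogous, and easier, argument (no length condition to track), as in \cite{Bondarko10}.

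\emph{Main obstacle.} The only genuinely substantive step is the construction of the approximation triangle in item (1): arranging the two successive truncations (one in $\Xcal$, one in $\Ycal$) and the two applications of the octahedral axiom so that the resulting object really lands in $\Dcal'$ and $\Dcal''$ as defined, and in particular checking that the $i^*$-condition on the left term and the $i^!$-condition on the right term both come out correctly given that these two functors differ. Everything else is either a formal consequence or a citation.
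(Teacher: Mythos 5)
Your proposal follows essentially the same route as the paper, which gives no proof of its own but simply notes that the glueing arguments of Beilinson--Bernstein--Deligne (theorem 1.4.10) and Bondarko (theorem 8.2.3) carry over: your item (1) is exactly that standard construction (truncate $j^*Z$ in $\Xcal$, map in $j_!$ of the torsion part, then correct the $i$-part by truncating $i^!$ of the cone and mapping in via the $(i_*,i^!)$-adjunction --- the same pattern the paper itself reuses in theorem \ref{glue silting}), and your items (2)--(4) are the same shift bookkeeping plus the same citations. The only caveats are cosmetic: $(\Xcal',\Xcal''[1])$ is a t-structure, not a torsion pair, so item (1) should be applied to $(\Xcal',\Xcal'')$ and $(\Ycal',\Ycal'')$ with the glued t-structure then being $(\Dcal',\Dcal''[1])$ --- which is in fact the reading of the stated $(\Dcal',\Dcal''[-1])$ that is consistent with the paper's conventions --- and no ``cross terms'' need killing in the orthogonality computation, since the two adjunctions already give exactly the two vanishing Hom groups.
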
 

If $\Dcal$ admits a Serre functor $S$, $\Rcal$ is a recollement of $\Dcal$ of the form (\ref{recollement}) and $(\Xcal^{\leq 0},\Xcal^{\geq 0})$ and $(\Ycal^{\leq 0},\Ycal^{\geq 0})$ are t-structures in $\Xcal$ and $\Ycal$ respectively, then there are three naturally associated t-structures in $\Dcal$, obtained by glueing these with respect to $\Rcal$, $\Rcal_U$ and $\Rcal_L$ - we will denote them by $(\Dcal^{\leq 0},\Dcal^{\geq 0})$, $(\Dcal_U^{\leq 0},\Dcal_U^{\geq 0})$, $(\Dcal_L^{\leq 0},\Dcal_L^{\geq 0})$, respectively. Similarly, given co-t-structures $(\Xcal_{\geq 0},\Xcal_{\leq 0})$ and $(\Ycal_{\geq 0},\Ycal_{\leq 0})$ in $\Xcal$ and $\Ycal$, we get three glued co-t-structures $(\Dcal_{\geq 0},\Dcal_{\leq 0})$, $(\Dcal^U_{\geq 0},\Dcal^U_{\leq 0})$, $(\Dcal^L_{\geq 0},\Dcal^L_{\leq 0})$.

\subsection{Correspondences with silting objects}

An object $M\in\Dcal$ is \textbf{silting} if $\Hom_{\Dcal}(M, M[i])=0$, for all $i>0$ and $\Dcal$ is the smallest triangulated subcategory containing $M$ which is closed under direct summands, extensions and shifts. It is \textbf{tilting} if, in addition, $\Hom_{\Dcal}(M,M[i])=0$, for all $i<0$. We say that two such objects $M$ and $N$ are equivalent if $add(M)=add(N)$.   The following result is a consequence of~\cite[corollary 5.9]{MendozaSaenzSantiagoSouto10} (see also \cite{KellerNicolas11}).

\begin{theorem}\label{general bijection}
There is a bijection between bounded co-t-structures in $\Dcal$ whose co-heart is additively generated by one object and equivalence classes of silting objects of $\Dcal$.
\end{theorem}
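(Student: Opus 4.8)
The plan is to establish the correspondence of Theorem \ref{general bijection} directly, going through the two natural constructions: from a silting object one produces a bounded co-t-structure, and from a bounded co-t-structure with a one-object co-heart one recovers a silting object, and then to check these assignments are mutually inverse on equivalence classes. First I would, given a silting object $M$, define the would-be co-t-aisle $\Dcal_{\geq 0}$ to be the smallest full subcategory containing $M$ closed under extensions, direct summands and \emph{positive} shifts $[n]$, $n\geq 0$ (equivalently, $\Dcal_{\geq 0}=\thick(M)_{\geq 0}$ built from coproducts of $M[n]$, $n\geq 0$, via extensions and summands), and set $\Dcal_{\leq 0}={}^{\perp}(\Dcal_{\geq 1})$. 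The key point is that $(\Dcal_{\geq 0},\Dcal_{\leq -1})$ is a torsion pair: the Hom-vanishing $\Hom(\Dcal_{\geq 0},\Dcal_{\leq -1})=0$ is built in, and existence of approximation triangles follows from the silting condition $\Hom_{\Dcal}(M,M[i])=0$ for $i>0$ together with a standard dévissage/generation argument (building the truncation triangle of an arbitrary object by induction along the filtration witnessing that $M$ generates $\Dcal$). Boundedness is immediate from the fact that $\Dcal=\thick(M)$, and one checks the co-heart $\Dcal_{\geq 0}\cap\Dcal_{\leq 0}$ equals $\mathrm{add}(M)$, hence is additively generated by one object.

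Conversely, given a bounded co-t-structure $(\Dcal_{\geq 0},\Dcal_{\leq 0})$ whose co-heart $\Ccal:=\Dcal_{\geq 0}\cap\Dcal_{\leq 0}$ is $\mathrm{add}(M)$ for some object $M$, I would verify that $M$ is silting: $\Hom_{\Dcal}(M,M[i])=0$ for $i>0$ because $M[i]\in\Dcal_{\geq i}\subseteq\Dcal_{\geq 1}$ and $M\in\Dcal_{\leq 0}=(\Dcal_{\geq 1})^{\perp}$ (using the torsion-pair description of the co-t-coaisle from the preliminaries), and $\thick(M)=\Dcal$ follows from boundedness by truncating any object repeatedly and peeling off co-heart summands — each co-t-truncation triangle $\beta_{\geq 0}Z\to Z\to\beta_{\leq -1}Z\to$ together with the fact that $\Dcal_{\geq n}/\Dcal_{\geq n+1}$-pieces lie in $\mathrm{add}(M[n])$ lets one write $Z$ as a finite iterated extension of shifts of $M$. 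That $M$ is well-defined up to the equivalence relation $\mathrm{add}(M)=\mathrm{add}(N)$ is clear since the co-heart is canonically attached to the co-t-structure.

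Then I would check the two maps are inverse to each other. Starting from $M$ silting, building the co-t-structure as above and taking its co-heart returns $\mathrm{add}(M)$, i.e. the equivalence class of $M$ — this is the co-heart computation mentioned above. Starting from a co-t-structure, extracting $M$ and rebuilding the co-t-aisle from $M$ by positive extensions/shifts/summands recovers $\Dcal_{\geq 0}$: the inclusion $\thick(M)_{\geq 0}\subseteq\Dcal_{\geq 0}$ is formal (the latter is closed under the relevant operations and contains $M$), and the reverse inclusion uses boundedness and the truncation triangles to show every object of $\Dcal_{\geq 0}$ is a finite iterated extension of objects $M[n]$ with $n\geq 0$; since co-t-aisles are determined by either half of the torsion pair, this pins down the whole co-t-structure. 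I would cite \cite[corollary 5.9]{MendozaSaenzSantiagoSouto10} and \cite{KellerNicolas11} for the technical heart of the dévissage, since the statement is asserted there to be a consequence of those results.

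The main obstacle is the dévissage argument showing that the bounded generation hypothesis genuinely produces the approximation triangles of the torsion pair (in the silting-to-co-t-structure direction) and, dually, that a one-object co-heart forces $\thick(M)=\Dcal$ with every object a \emph{finite} iterated extension of shifts of $M$ (in the other direction); controlling finiteness and termination of the truncation process, and ensuring the pieces genuinely lie in $\mathrm{add}(M[n])$ rather than merely in some closure, is where the real work sits. Everything else — the Hom-vanishing, closure properties, and the identification of the co-heart with $\mathrm{add}(M)$ — is essentially formal once the torsion-pair characterisations recalled in Section 2 are in hand.
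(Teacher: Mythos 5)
The paper offers no proof of this statement: it is recorded as a direct consequence of \cite[corollary 5.9]{MendozaSaenzSantiagoSouto10} (see also \cite{KellerNicolas11}), followed only by the explicit description of the two mutually inverse assignments. Your sketch reconstructs precisely those two assignments and correctly isolates the only non-formal content — the d\'evissage producing the approximation triangles of the torsion pair, the finiteness of the truncation process, and the identification of the co-heart with $\mathrm{add}(M)$ — and then, like the paper, delegates exactly that content to the same references. So in substance your route coincides with the paper's, and there is no gap relative to what the paper itself establishes.

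There is, however, one concrete thing to repair: your sign conventions are the reverse of those fixed in Section 2, and one verification is written against the wrong orthogonal. With the paper's definitions, the co-t-aisle satisfies $\Dcal_{\geq 0}[-1]\subseteq\Dcal_{\geq 0}$ and is generated (under extensions and summands) by the shifts $M[i]$ with $i\leq 0$, while the coaisle $\Dcal_{\leq 0}$ is generated by $M[i]$ with $i\geq 0$; the orthogonality $\Hom_{\Dcal}(\Dcal_{\geq 0},\Dcal_{\leq -1})=0$ then unwinds to $\Hom_{\Dcal}(M,M[j])=0$ for $j\geq 1$, which is the silting condition. The subcategory you build from the \emph{non-negative} shifts of $M$ is therefore the paper's coaisle, not its aisle, and as written your $\Dcal_{\geq 0}$ fails the axiom $\Dcal_{\geq 0}\subseteq\Dcal_{\geq -1}$. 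Correspondingly, in the converse direction the justification ``$M[i]\in\Dcal_{\geq 1}$ and $M\in(\Dcal_{\geq 1})^{\perp}$'' yields $\Hom_{\Dcal}(M[i],M)=0$, i.e.\ vanishing of \emph{negative}-degree self-maps — not the silting condition, and not something one should expect to be able to prove, since a co-heart generator need not be tilting. The intended argument is $M\in\Dcal_{\geq 0}$ and $M[i]\in\Dcal_{\leq -i}\subseteq\Dcal_{\leq -1}$ for $i\geq 1$. These are relabelings rather than a wrong idea — swapping the two halves throughout (and putting the perpendicular on the correct side) makes everything you wrote consistent — but in the form given, the forward construction does not produce a co-t-structure in the paper's sense and the backward verification targets the wrong Hom-space.
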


This bijection can be described as follows.
\begin{itemize}
\item Given a co-t-structure $(\Dcal_{\geq 0},\Dcal_{\leq 0})$, the associated equivalence class is given by a silting object $M$ which is an additive generator of the co-heart, i.e., $\Dcal_{\geq 0}\cap\Dcal_{\leq 0}=add(M)$.
\item Given a silting object $M\in\Dcal$, the associated co-t-structure is defined as follows:
$\Dcal_{\geq 0}$ is the smallest full subcategory closed under direct summands, direct sums and extensions containing $\left\{M[i]:i\leq 0\right\}$; similarly, $\Dcal_{\leq 0}$ is the smallest full subcategory closed under direct summands, direct sums and extensions containing $\left\{M[i]:i\geq 0\right\}$. 
\end{itemize}

\begin{theorem}\emph{(\cite[theorem 7.1]{KoenigYang12}).}\label{bijections}
In $\Dcal^b(R)$, there are bijections between
\begin{itemize}
\item the set of bounded t-structures in $\Dcal^b(R)$ whose heart is a length category;
\item the set of equivalence classes of silting objects in $K^b(proj\mbox{-}R)$;
\item the set of bounded co-t-structures in $K^b(proj\mbox{-}R)$.
\end{itemize}
\end{theorem}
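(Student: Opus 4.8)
The plan is to obtain the asserted bijections by composing two of them, using the standard fully faithful inclusion $K^b(proj\mbox{-}R)\hookrightarrow\Dcal^b(R)$: one between equivalence classes of silting objects in $K^b(proj\mbox{-}R)$ and bounded co-t-structures in $K^b(proj\mbox{-}R)$, and one between the latter and bounded t-structures in $\Dcal^b(R)$ whose heart is a length category.

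\emph{Silting objects versus co-t-structures in $K^b(proj\mbox{-}R)$.} Theorem~\ref{general bijection}, applied with $\Dcal=K^b(proj\mbox{-}R)$, already gives a bijection between equivalence classes of silting objects and those bounded co-t-structures whose co-heart equals $add(M)$ for a single object $M$, so it suffices to check that in $K^b(proj\mbox{-}R)$ \emph{every} bounded co-t-structure has co-heart of this shape. Here I would observe that the co-heart $\Ccal_{\geq 0}\cap\Ccal_{\leq 0}$ of any bounded co-t-structure is a silting subcategory: it is closed under summands; it has vanishing positive self-extensions, directly from the torsion-pair axioms (a positive shift of a co-heart object lands in $\Ccal_{\leq -1}$ while the object itself stays in $\Ccal_{\geq 0}$); and it generates $K^b(proj\mbox{-}R)$ as a thick subcategory, since boundedness filters every object by finitely many shifts of co-heart objects. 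Since $K^b(proj\mbox{-}R)$ carries the silting object $R$, one then uses the standard fact (Aihara--Iyama, Keller--Nicol\'as) that every silting subcategory of it is $add(M)$ for a silting object $M$, which supplies the single generator.

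\emph{Co-t-structures in $K^b(proj\mbox{-}R)$ versus length-hearted t-structures in $\Dcal^b(R)$.} Given a bounded co-t-structure in $K^b(proj\mbox{-}R)$, equivalently the silting object $M$ generating its co-heart, put $\Lambda:=\End_{\Dcal^b(R)}(M)$ and assign to it the pair in $\Dcal^b(R)$ with t-aisle $\Dcal^{\leq 0}_M:=\{Z:\Hom_{\Dcal^b(R)}(M,Z[i])=0\text{ for all }i>0\}$. I would then verify, in order: that this is a bounded t-structure, where closure under $[1]$ and the $\Hom$-orthogonality are immediate and the truncation triangles are produced by induction on the amplitude of an object with respect to the standard t-structure, splitting off one cohomology group at a time using that $M$ is silting and that the category is $\Hom$-finite; that $\Hom_{\Dcal^b(R)}(M,-)$ restricts to an equivalence of the heart $\Dcal^{\leq 0}_M\cap\Dcal^{\geq 0}_M$ with $\fdmod\Lambda$, which is a length category; and that $M=R$ returns the standard t-structure. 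For the inverse, a bounded t-structure with length heart $\Hcal$ has finitely many simple objects forming a simple-minded collection in $\Dcal^b(R)$, from which one reconstructs the silting object $M$ characterized by $\Hom_{\Dcal^b(R)}(M,S_j[i])=0$ for $i\neq 0$, and then checks the two assignments are mutually inverse.

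\emph{Main obstacle.} The crux is the surjectivity of the second bijection: proving that \emph{every} bounded t-structure with length heart arises from a silting object. This amounts to reconstructing, from the simple-minded collection of simples of $\Hcal$, a silting object in $K^b(proj\mbox{-}R)$ whose generated co-t-structure maps back to the given t-structure --- equivalently, producing a finite-dimensional algebra $\Lambda$ with $\fdmod\Lambda\simeq\Hcal$. Because no hypothesis is placed on the global dimension of $R$, so that $K^b(proj\mbox{-}R)$ may sit properly inside $\Dcal^b(R)$, the comparison of the reconstructed t-structure with the original one, and in particular with the standard t-structure, has to be carried out carefully within the bounded derived category; this is where the real work of \cite{KoenigYang12} lies.
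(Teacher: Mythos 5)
The paper does not prove this statement at all: it is imported verbatim from \cite[theorem 7.1]{KoenigYang12}, so there is no internal argument to compare yours against. Judged on its own terms, your outline correctly reproduces the architecture of the Koenig--Yang proof. The first half is essentially complete: the co-heart of a bounded co-t-structure is a silting subcategory (your orthogonality and generation arguments are right, the latter being Bondarko's weight filtration), and the reduction to a single additive generator is exactly the Aihara--Iyama result that every silting subcategory of $K^b(proj\mbox{-}R)$ for a finite-dimensional algebra has the form $add(M)$; combined with theorem \ref{general bijection} this settles the silting/co-t-structure correspondence.

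The second half, however, contains a genuine gap that you yourself flag but do not close: the surjectivity of $M\mapsto(\Dcal^{\leq 0}_M,\Dcal^{\geq 0}_M)$ onto all bounded t-structures with length heart. Saying that one ``reconstructs the silting object $M$ characterized by $\Hom_{\Dcal^b(R)}(M,S_j[i])=0$ for $i\neq 0$'' merely restates the desired conclusion; the existence of such an $M$ in $K^b(proj\mbox{-}R)$ is precisely the theorem's content, and in \cite{KoenigYang12} it requires the full machinery relating silting objects to simple-minded collections via non-positive dg algebras (Koszul-type duality), not a formal orthogonality argument. A secondary soft spot is your construction of the truncation triangles for $\Dcal^{\leq 0}_M$ by ``splitting off one cohomology group at a time'': without finite global dimension $M$ need not be isomorphic in $\Dcal^b(R)$ to a bounded complex of projectives that interacts well with standard truncation, and the clean route is instead to produce the t-structure as the right adjacent of the already-established co-t-structure (or to work in the homotopy category of dg modules, as Koenig--Yang do). So the proposal is a faithful roadmap of the cited proof rather than a proof; the decisive construction is deferred exactly where the real difficulty lies.
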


Recall that $R$ is of finite global dimension if and only if $\Dcal^b(R)$ admits a Serre functor (\cite{BondalKapranov89}, \cite{ReitenVandenBergh02}). Also, in this case, we have $K^b(proj\mbox{-}R)\cong \Dcal^b(R)$ and, thus, the correspondence between t-structures and co-t-structures occurs in the same category. 
Under this assumption, the bijections of the theorem can be made explicit as follows (see \cite[section 6]{KoenigYang12} for details).

\begin{itemize}
\item Given a t-structure $(\Dcal^{\leq 0},\Dcal^{\geq 0})$ in $\Dcal^b(R)$, we can associate a co-t-structure $(\Dcal_{\geq 0},\Dcal_{\leq 0})$ by taking $\Dcal_{\leq 0}=\Dcal^{\leq 0}$ and $\Dcal_{\geq 0}=\ ^\perp\Dcal^{\leq -1}$; In the language of \cite{Bondarko10}, this is the \textbf{left adjacent co-t-structure} to the t-structure $(\Dcal^{\leq 0},\Dcal^{\geq 0})$.
\item Given a co-t-structure $(\Dcal_{\geq 0},\Dcal_{\leq 0})$ in $\Dcal^b(R)$, we can associate a t-structure by taking $\Dcal^{\leq 0}=\Dcal_{\leq 0}$ and $\Dcal^{\geq 0}=\Dcal_{\leq -1}^\perp$; In the language of \cite{Bondarko10}, this is the \textbf{right adjacent t-structure} to the co-t-structure $(\Dcal_{\geq 0},\Dcal_{\leq 0})$.
\item Given a t-structure $(\Dcal^{\leq 0},\Dcal^{\geq 0})$ in $\Dcal^b(R)$, the associated silting object is the direct sum of the indecomposable Ext-projective objects in the aisle, i.e., the objects $X$ of $\Dcal^{\leq 0}$ such that $\Hom_{\Dcal^b(R)}(X,Y[1])=0$ for all $Y\in\Dcal^{\leq 0}$.
\item Given a silting object $M$ in $\Dcal^b(R)$, the associated t-structure is defined as follows
\begin{equation}\nonumber
\Dcal^{\leq 0}=\left\{Z\in\Dcal^b(R): \Hom_{\Dcal^b(R)}(M,Z[i])=0, \forall i>0\right\}
\end{equation}
\begin{equation}\nonumber
\Dcal^{\geq 0}=\left\{Z\in\Dcal^b(R): \Hom_{\Dcal^b(R)}(M,Z[i])=0, \forall i<0\right\}.
\end{equation}
\end{itemize}
\begin{remark} If $R$ is of finite global dimension, as before, then the co-t-coaisle $\Dcal_{\leq 0}$ associated with a silting object $M$ in $\Dcal^b(R)$ coincides with the aisle $\Dcal^{\leq 0}$ associated with the same silting object. In other words, the co-t-structure associated to $M$ is left adjacent to the t-structure associated to $M$.
\end{remark}

\begin{remark}\label{adjoint structures}
The structures glued from adjacent t-structures and co-t-structures are not directly related by adjacency, as observed by Bondarko~\cite[remark 8.2.4.4]{Bondarko10}. Suppose, however, that $\Dcal$ has a Serre functor and that $(\Xcal_{\geq 0},\Xcal_{\leq 0})$ (respectively, $(\Ycal_{\geq 0},\Ycal_{\leq 0})$) is a left adjacent co-t-structure to the t-structure $(\Xcal^{\leq 0},\Xcal^{\geq 0})$ (respectively, $(\Ycal^{\leq 0},\Ycal^{\geq 0})$). Using the descriptions provided in theorem \ref{glue} and the functors of theorem \ref{reflect} for a fixed recollement $\Rcal$, we have
\begin{equation}\nonumber
\Dcal_{\leq 0}^U=\Dcal^{\leq 0}=\left\{Z\in \Dcal: j^*Z\in\Xcal^{\leq 0}=\Xcal_{\leq 0}, i^*Z\in\Ycal^{\leq 0}=\Ycal_{\leq 0}\right\}.
\end{equation}
This means that the co-structure $(\Dcal^U_{\geq 0},\Dcal^U_{\leq 0})$ glued by the upper reflected recollement $\Rcal_U$ is left adjacent to the t-structure $(\Dcal^{\leq 0},\Dcal^{\geq 0})$ glued by the recollement $\Rcal$. Similarly, the t-structure $(\Dcal_L^{\leq 0},\Dcal_L^{\geq 0})$ glued by  $\Rcal_L$ is right adjacent to the co-t-structure $(\Dcal_{\geq 0},\Dcal_{\leq 0})$ glued by $\Rcal$.
\end{remark}

\begin{remark}\label{tilting}
A tilting object $T$ in $\Kcal^b(proj\mbox{-}R)$ yields equivalences between $\Dcal^b(R)$ and $\Dcal^b(End(T))$ and between $\Kcal^b(proj\mbox{-}R)$ and $\Kcal^b(proj\mbox{-}End(T))$ (see \cite[theorem 6.4]{Rickard89}). Under these equivalences, the t-structure and the co-t-structure associated to $T$ in $\Dcal^b(R)$ correspond to the standard ones in $\Dcal^b(End(T))$ and  $K^b(proj\mbox{-}End(T))$, respectively.
\end{remark}

\subsection{HRS-tilts and silting mutation}\label{HRS-tilts and silting mutation}

There are mutation operations on both t-structures and silting objects. Recall that the definition of a torsion pair in an abelian category is analogous to that of a torsion pair in a triangulated category, replacing the triangle axiom by a short exact sequence.

\begin{theorem}\emph{(\cite[proposition 2.1]{ HappelReitenSmaloe96}, \cite[proposition 2.5]{Bridgeland05}).}\label{HRS tilting}
Let $(\Dcal^{\leq 0},\Dcal^{\geq 0})$ be a bounded t-structure in a triangulated category $\Dcal$ with heart $\mathcal{A}$ and associated cohomology functors $H^i$, $i\in\Zbb$. Suppose that  $(\mathcal{T},\mathcal{F})$ is a torsion pair in $\mathcal{A}$. Then $(\mathcal{D}_{(\Tcal,\Fcal)}^{\leq 0}, \mathcal{D}_{(\Tcal,\Fcal)}^{\geq 0})$ is a t-structure in $\mathcal{D}$, where
\begin{equation}\nonumber
\mathcal{D}^{\leq 0}_{(\mathcal{T},\mathcal{F})}=\left\{E\in\mathcal{D}: H^i(E)=0,\ \forall i>0, H^0(E)\in \mathcal{T}\right\}
\end{equation}
\begin{equation}\nonumber
\mathcal{D}^{\geq 0}_{(\mathcal{T},\mathcal{F})}=\left\{E\in\mathcal{D}: H^i(E)=0,\ \forall i<-1, H^{-1}(E)\in \mathcal{F}\right\}.
\end{equation}
\end{theorem}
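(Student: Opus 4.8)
The plan is to verify the two defining conditions of a t-structure for the pair $(\mathcal{D}^{\leq 0}_{(\Tcal,\Fcal)}, \mathcal{D}^{\geq 1}_{(\Tcal,\Fcal)})$, namely the Hom-vanishing between the aisle and the shifted coaisle, and the existence of truncation triangles; the inclusion $\mathcal{D}^{\leq 0}_{(\Tcal,\Fcal)}\subseteq \mathcal{D}^{\leq 1}_{(\Tcal,\Fcal)}$ is immediate from the description in terms of cohomology functors. Throughout I would work with the cohomology functors $H^i$ of the given bounded t-structure on $\Dcal$ and exploit that $H^i(E[n])=H^{i+n}(E)$.

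First I would establish the orthogonality $\Hom_{\Dcal}(E,F)=0$ for $E\in\mathcal{D}^{\leq 0}_{(\Tcal,\Fcal)}$ and $F\in\mathcal{D}^{\geq 1}_{(\Tcal,\Fcal)}$. For this I would use the truncation triangles of the \emph{original} t-structure to reduce to computing Homs between objects concentrated in a single degree of $\mathcal{A}$: filtering $E$ by $\tau^{\leq 0}E=E$ and $\tau^{\leq -1}E$, and similarly filtering $F$, one is left with the cases where both $E$ and $F$ are (shifts of) objects of $\mathcal{A}$. Degree reasons in the original t-structure kill all terms except the ``boundary'' one, where one must show $\Hom_{\mathcal{A}}(H^0(E),H^0(F)[\text{something}])$-type contributions vanish; the only surviving piece is $\Hom_{\mathcal{A}}(T',F')$ with $T'=H^0(E)\in\Tcal$ and $F'=H^{-1}(F)\in\Fcal$, which is zero because $(\Tcal,\Fcal)$ is a torsion pair in $\mathcal{A}$. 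Here one also uses that $\Hom_{\Dcal}(A,B[1])$ for $A,B\in\mathcal{A}$ is computed inside $\mathcal{A}$ as $\Ext^1_{\mathcal{A}}$, but only the $\Hom$ (not $\Ext^1$) term actually appears after the dust settles.

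Next I would construct, for an arbitrary $E\in\Dcal$, a triangle $E'\to E\to E''\to E'[1]$ with $E'\in\mathcal{D}^{\leq 0}_{(\Tcal,\Fcal)}$ and $E''\in\mathcal{D}^{\geq 1}_{(\Tcal,\Fcal)}$. The recipe: start from the original truncation triangle $\tau^{\leq -1}E\to E\to \tau^{\geq 0}E\to$, then refine at degree $0$ using the torsion pair $(\Tcal,\Fcal)$ applied to $H^0(E)\in\mathcal{A}$, giving a short exact sequence $0\to tH^0(E)\to H^0(E)\to H^0(E)/t\to 0$ with $tH^0(E)\in\Tcal$, $H^0(E)/t\in\Fcal$. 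Splicing: one forms $E'$ as the extension of $\tau^{\leq -1}E$ by $tH^0(E)$ sitting in degree $0$ (concretely, $E'$ is the cone of a suitable map $(H^0(E)/t)[-1]\to \tau^{\leq -1}E[1]\oplus\ldots$, or more cleanly one takes the pullback/octahedron along $\tau^{\leq 0}E\to H^0(E)\to H^0(E)/t$). The octahedral axiom applied to the composite $\tau^{\leq 0}E\to E\to\tau^{\geq 0}E$... rather, applied to $E\to\tau^{\geq 0}E\to H^0(E)\to H^0(E)/t$, produces $E'':=\mathrm{cone}(E\to \mathrm{cone}(\tau^{\geq 1}E[-1]\to (H^0(E)/t)))$, and I would then read off from the long exact sequence in $H^i$ that $H^i(E')=H^i(E)$ for $i<0$, $H^0(E')=tH^0(E)\in\Tcal$, $H^i(E')=0$ for $i>0$, so $E'\in\mathcal{D}^{\leq 0}_{(\Tcal,\Fcal)}$; and $H^i(E'')=H^i(E)$ for $i>0$, $H^0(E'')=H^0(E)/t\in\Fcal$ placed in degree... — after the shift bookkeeping, $H^{-1}(E'')=H^0(E)/t\in\Fcal$ and $H^i(E'')=0$ for $i<-1$, so $E''[?]$ lands in $\mathcal{D}^{\geq 1}_{(\Tcal,\Fcal)}$.

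The main obstacle I expect is the bookkeeping in this splicing step: correctly chasing the octahedron so that exactly one cohomology degree is altered by the torsion pair while all others are inherited from the standard truncation, and getting the index shifts right so that the aisle/coaisle membership matches the stated formulas (the asymmetry — $H^0\in\Tcal$ on one side, $H^{-1}\in\Fcal$ on the other — is exactly the source of potential sign/index errors). Everything else is a routine application of the long exact cohomology sequence and the torsion-pair axioms; in fact, since the statement cites Happel--Reiten--Smalø and Bridgeland, I would also remark that one may instead verify the characterization by showing $\mathcal{D}^{\leq 0}_{(\Tcal,\Fcal)}$ is closed under $[1]$ and coincides with $^{\perp}(\mathcal{D}^{\geq 1}_{(\Tcal,\Fcal)})$, reducing the whole proof to the orthogonality computation above plus a generation argument using boundedness of the original t-structure.
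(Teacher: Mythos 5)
The paper does not prove this statement; it is quoted from Happel--Reiten--Smal{\o} and Bridgeland, so there is no in-paper argument to compare against. Your proposal is the standard proof of the HRS tilting theorem and its strategy is correct: orthogonality of $(\mathcal{D}^{\leq 0}_{(\Tcal,\Fcal)},\mathcal{D}^{\geq 1}_{(\Tcal,\Fcal)})$ by d\'evissage along the original truncations, reducing to $\Hom_{\mathcal{A}}(\Tcal,\Fcal)=0$, plus construction of the truncation triangle by splicing the torsion decomposition of $H^0(E)$ into $\tau^{\leq 0}E\to E\to\tau^{\geq 1}E$ via the octahedral axiom. The only defects are the index slips you yourself flag: with the paper's convention $\mathcal{D}^{\geq 1}=\mathcal{D}^{\geq 0}[-1]$, an object $F\in\mathcal{D}^{\geq 1}_{(\Tcal,\Fcal)}$ has $H^i(F)=0$ for $i<0$ and $H^0(F)\in\Fcal$, so the surviving boundary term in the orthogonality computation is $\Hom_{\mathcal{A}}(H^0(E),H^0(F))$, not $\Hom_{\mathcal{A}}(H^0(E),H^{-1}(F))$; correspondingly, in the splicing step the piece $E''=\mathrm{cone}(E'\to E)$ should satisfy $H^0(E'')\cong H^0(E)/t\in\Fcal$ and $H^i(E'')=0$ for $i<0$, which places $E''$ itself (no further shift) in $\mathcal{D}^{\geq 1}_{(\Tcal,\Fcal)}$. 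With those two normalizations fixed, the argument closes; the remaining checks (closure under summands and $[1]$, and $\mathcal{D}^{\leq 0}_{(\Tcal,\Fcal)}\subseteq\mathcal{D}^{\leq 1}_{(\Tcal,\Fcal)}$) are immediate from the cohomological description, as you say.
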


The t-structure $(\mathcal{D}^{\leq 0}_{(\mathcal{T},\mathcal{F})}, \mathcal{D}^{\geq 0}_{(\mathcal{T},\mathcal{F})})$ is called \textbf{the HRS-tilt} of $(\Dcal^{\leq 0},\Dcal^{\geq 0})$ with respect to $(\Tcal,\Fcal)$. 

\begin{remark}\label{inter}
HRS-tilts span an important class of t-structures. In fact, it is known from \cite[theorem 3.1]{BeligiannisReiten07} and \cite[proposition 2.1]{Woolf10} that, for a bounded t-structure $(\Dcal^{\leq 0},\Dcal^{\geq 0})$, the bounded t-structures $(\Ccal^{\leq 0},\Ccal^{\geq 0})$ such that $\Dcal^{\leq -1}\subseteq \Ccal^{\leq 0}\subseteq \Dcal^{\leq 0}$ are precisely those obtained from $(\Dcal^{\leq 0},\Dcal^{\geq 0})$ by an HRS-tilt with respect to a torsion pair.
\end{remark}

Assume that  the heart $\Acal$ of the t-structure $(\Dcal^{\leq 0},\Dcal^{\geq 0})$ is a length category and let $S$ be a simple object of $\Acal$ without self-extensions. The left mutation of $(\Dcal^{\leq 0},\Dcal^{\geq 0})$ with respect to $S$ is defined by
\begin{equation}\nonumber
\mu^-_S(\Dcal^{\leq 0},\Dcal^{\geq 0})=(\Dcal^{\leq 0}_{-,S},\Dcal^{\geq 0}_{-,S}):=(\Dcal^{\leq 0}_{(\ ^\perp S,add(S))}, \Dcal^{\geq 0}_{(\ ^\perp S,add(S))})
\end{equation}
and its right mutation as
\begin{equation}\nonumber
\mu^+_S(\Dcal^{\leq 0},\Dcal^{\geq 0})=(\Dcal^{\leq 0}_{+,S},\Dcal^{\geq 0}_{+,S}):=(\Dcal^{\leq 0}_{(add(S), S^\perp)}[-1], \Dcal^{\geq 0}_{(add(S),S^\perp)}[-1])
\end{equation}
The torsion pairs $(add(S),S^\perp)$ and $(\ ^\perp S,add(S))$ will be called \textbf{mutation torsion pairs}.

On the other hand, silting mutation was introduced and studied by Buan, Reiten and Thomas in \cite{BuanReitenThomas11} and, independently, by Aihara and Iyama in \cite{AiharaIyama12}. We recall its definition.

\begin{definition}
Let $M=X\oplus Y$ be a silting object  in $\Dcal^b(R)$. The \textbf{left mutation of $M$ at $X$}, denoted by $\mu^-_X(M)$, is defined as the direct sum $\tilde{X}\oplus Y$, where $\tilde{X}$ is the cone of a left $add(Y)$-approximation of $X$ (i.e., of a morphism $\phi:X\ra L$, with $L$ in $add(Y)$ such that for any $Z$ in $add(Y)$, $\Hom_{\Dcal^b(R)}(\phi, Z)$ is surjective). 

Similarly, the \textbf{right mutation of $M$ at $X$}, denoted by $\mu^+_X(M)$, is defined as the sum $\bar{X}\oplus Y$, where $\bar{X}$ is the cone of a right $add(Y)$-approximation of $X$ (i.e., of a morphism $\psi:K\ra X$, with $K$ in $add(Y)$ such that, for any $Z$ in $add(Y)$, $\Hom_{\Dcal^b(R)}(Z,\psi)$ is surjective). 

We say that a mutation is \textbf{irreducible} if $X$ is indecomposable.
\end{definition}

\begin{theorem}\emph{(\cite[theorem 7.12]{KoenigYang12}).}\label{mutations commute} Irreducible silting mutations are compatible with the bijections established in theorem \ref{bijections}. More precisely, if $S_1,...,S_n$ are the simple objects in the heart of a t-structure $(\Dcal^{\leq 0},\Dcal^{\geq 0})$ and $X_1,..., X_n$ are indecomposable Ext-projective objects in $\Dcal^{\leq 0}$ such that $\Hom(X_i,S_i)\neq 0$, for all $1\leq i\leq n$, then the aisles corresponding to the silting objects $\mu^+_{X_i}(M)$ and $\mu_{X_i}^-(M)$ are $\Dcal^{\leq 0}_{+,S_i}$ and $\Dcal^{\leq 0}_{-,S_i}$ respectively.
\end{theorem}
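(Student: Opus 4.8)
The plan is to fix $i$ and prove the statement for the left mutation $\mu^-_{X_i}(M)$; the case of $\mu^+_{X_i}(M)$ is dual, using right $add(Y)$-approximations in place of left ones. Write $M=X_i\oplus Y$ with $Y=\bigoplus_{j\neq i}X_j$, fix a left $add(Y)$-approximation $\phi\colon X_i\to L$ and let $\tilde X$ be its cone, so that there is a triangle
\begin{equation}\nonumber
X_i\xrightarrow{\phi}L\to\tilde X\to X_i[1]
\end{equation}
and $\mu^-_{X_i}(M)=\tilde X\oplus Y$. By \cite{AiharaIyama12} this is again a silting object, hence by Theorem \ref{bijections} it corresponds to a bounded t-structure $(\Ccal^{\leq 0},\Ccal^{\geq 0})$ with length heart, whose aisle is $\Ccal^{\leq 0}=\{Z:\Hom(\tilde X\oplus Y,Z[k])=0\text{ for all }k>0\}$. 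The goal is to prove $\Ccal^{\leq 0}=\Dcal^{\leq 0}_{-,S_i}$ (the coaisle statement then follows, and the $\mu^+$ case is symmetric).

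The first step is to check the inclusions $\Dcal^{\leq -1}\subseteq\Ccal^{\leq 0}\subseteq\Dcal^{\leq 0}$. Both are read off from the triangle above together with the fact that $L\in add(Y)$ is a direct summand of $M$ (so $\Hom(L,Z[k])=0$ whenever $\Hom(M,Z[k])=0$): if $Z\in\Ccal^{\leq 0}$ then $\Hom(\tilde X,Z[k])=\Hom(L,Z[k])=0$ for all $k>0$, and the long exact sequence forces $\Hom(X_i,Z[k])=0$ for all $k>0$, so $Z\in\Dcal^{\leq 0}$; conversely if $Z\in\Dcal^{\leq -1}$, i.e.\ $\Hom(M,Z[k])=0$ for all $k\geq 0$, then the same sequence forces $\Hom(\tilde X,Z[k])=0$ for all $k>0$, so $Z\in\Ccal^{\leq 0}$. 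By Remark \ref{inter}, $(\Ccal^{\leq 0},\Ccal^{\geq 0})$ is therefore the HRS-tilt of $(\Dcal^{\leq 0},\Dcal^{\geq 0})$ at a torsion pair $(\Tcal,\Fcal)$ of the heart $\Acal$, and unwinding the HRS-tilt formulae gives $\Tcal=\Acal\cap\Ccal^{\leq 0}$ and $\Fcal=\Acal\cap\Ccal^{\geq 1}$. It now suffices to prove $\Fcal=add(S_i)$: then $\Tcal={}^\perp\Fcal={}^\perp S_i$, so $(\Tcal,\Fcal)=({}^\perp S_i,add(S_i))$ is the mutation torsion pair and the HRS-tilt at it is by definition $\mu^-_{S_i}(\Dcal^{\leq 0},\Dcal^{\geq 0})$, whence $\Ccal^{\leq 0}=\Dcal^{\leq 0}_{-,S_i}$.

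To identify $\Fcal$, take $F\in\Acal$; then $F\in\Ccal^{\geq 1}$ means $\Hom(\tilde X\oplus Y,F[k])=0$ for all $k\leq 0$. Applying $\Hom(-,F[k])$ to the triangle and using that $X_i$, $L$, $\tilde X$, $Y$ all lie in $\Dcal^{\leq 0}$ while $F[k]\in\Dcal^{\geq 1}$ for $k<0$, one sees that every one of these vanishing conditions holds automatically except the one at $k=0$, which amounts to $\Hom(Y,F)=0$, i.e.\ $\Hom(X_j,F)=0$ for all $j\neq i$. Thus $\Fcal=\{F\in\Acal:\Hom(X_j,F)=0\text{ for all }j\neq i\}$. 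Since each $X_j$ is Ext-projective in $\Dcal^{\leq 0}$ and $\Acal\subseteq\Dcal^{\leq 0}$, the obstruction $\Hom(X_j,(-)[1])$ vanishes on $\Acal$, so $\Hom(X_j,-)$ is exact on short exact sequences of $\Acal$; consequently $\dim_\Kbb\Hom(X_j,F)=\sum_l [F:S_l]\,\dim_\Kbb\Hom(X_j,S_l)$, a sum of non-negative integers. Using that under the bijection of Theorem \ref{bijections} the Ext-projectives are dual to the simples, namely $\Hom(X_k,S_l)=0$ for $k\neq l$ while $\Hom(X_k,S_k)\neq 0$ (see \cite[\S6]{KoenigYang12}; the hypothesis $\Hom(X_i,S_i)\neq 0$ records precisely which simple is paired with $X_i$), this sum vanishes for every $j\neq i$ if and only if $[F:S_j]=0$ for every $j\neq i$, i.e.\ if and only if all composition factors of $F$ are isomorphic to $S_i$; since $S_i$ has no self-extension, this is equivalent to $F\in add(S_i)$. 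Hence $\Fcal=add(S_i)$, completing the proof.

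The step I expect to be the main obstacle is the final identification $\Fcal=add(S_i)$. The reduction to an HRS-tilt and the description of its torsion-free class as $\{F:\Hom(X_j,F)=0\ \forall j\neq i\}$ are formal consequences of the defining triangle of $\tilde X$; but to recognise this class as $add(S_i)$ one needs genuine input about the bijection of Theorem \ref{bijections} — the diagonal pairing between Ext-projectives and simples — and must then exploit Ext-projectivity, through exactness of $\Hom(X_j,-)$ on the heart, to pass from a statement about simple objects to a statement about all of $\Fcal$. The two aisle inclusions and the automatic vanishing of the higher and lower $\Hom$-groups are routine chases with that triangle.
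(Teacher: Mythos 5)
The paper does not actually prove this statement: it is quoted verbatim from Koenig--Yang (\cite[theorem 7.12]{KoenigYang12}), so there is no internal proof to compare against. Your argument is a correct direct verification, and it is essentially self-contained: you reduce via remark \ref{inter} to identifying the torsion pair of an HRS-tilt, and the triangle chases establishing $\Dcal^{\leq -1}\subseteq\Ccal^{\leq 0}\subseteq\Dcal^{\leq 0}$, the reduction of $\Fcal$ to $\{F\in\Acal:\Hom(X_j,F)=0,\ j\neq i\}$ (including the hidden step $\Hom(\tilde X,F)\hookrightarrow\Hom(L,F)$), and the counting argument using exactness of $\Hom(X_j,-)$ on the heart are all sound. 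The one genuine external input is exactly the one you flag: the orthogonality $\Hom(X_k,S_l)=0$ for $k\neq l$, which is not in the hypotheses and must be imported from \cite{KoenigYang12} (or derived from the equivalence $\Hom(M,-)\colon\Acal\to\fdmod\End(M)$, under which $\Hom(X_k,-)$ becomes multiplication by the idempotent $e_k$ and each simple is the top of exactly one indecomposable projective); note that your last step also uses $\Ext^1(S_i,S_i)=0$, which is implicit in the paper's very definition of $\Dcal^{\leq 0}_{\pm,S_i}$, and that the explicit aisle/coaisle formulas for the t-structure attached to a silting object hold in general (as in \cite{KoenigYang12}) even though this paper only spells them out under finite global dimension. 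Compared with the route in \cite{KoenigYang12}, which passes through their full machinery relating silting objects, simple-minded collections and their respective mutations, your proof buys a short, hands-on argument using only the aisle description and Ext-projectivity, at the price of quoting the silting--simple duality. One caveat on the final sentence of your first paragraph: the $\mu^+$ case is dual in spirit but not literally obtained by ``replacing left by right approximations'' --- with $\bar X$ the co-cone of a right $add(Y)$-approximation one gets $\Dcal^{\leq 0}\subseteq\Ccal'^{\leq 0}\subseteq\Dcal^{\leq 1}$, so it is $\Ccal'^{\leq 0}[1]$ that is an HRS-tilt aisle, and there one identifies the \emph{torsion} class (not the torsion-free class) as $add(S_i)$, which is precisely what produces the shift $[-1]$ in the definition of $\Dcal^{\leq 0}_{+,S_i}$; spelling this out would make the ``dual'' claim airtight.
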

The question of whether these mutations are compatible with glueing is discussed in section 6.

\section{Glueing of silting objects}

The results in this section make explicit in a more general setting the bijection established by Aihara and Iyama 
in \cite[theorem 2.37]{AiharaIyama12}, for Krull-Schmidt triangulated categories $\Dcal$, between the equivalence classes of silting objects in $\Dcal$ containing $i_*Y$ as a direct summand and the equivalence classes of silting objects in the triangle quotient $\Dcal/\thick(i_*Y)$. Here $\thick(i_*Y)$ is the smallest thick triangulated subcategory of $\Dcal$ containing $i_*Y$. More precisely, the theorem below shows how to glue two silting objects of $\Xcal$ and $\Ycal$ into a silting object of $\Dcal$ with respect to a recollement of the form (\ref{recollement}) and compatibly with the bijection between silting objects and co-t-structures of theorem \ref{general bijection}.  

\begin{theorem}\label{glue silting}
Let $\Rcal$ be a recollement of $\Dcal$ of the form (\ref{recollement}). Let $X$ and $Y$ be silting objects corresponding to co-t-structures $(\Xcal_{\geq 0},\Xcal_{\leq 0})$ and $(\Ycal_{\geq 0},\Ycal_{\leq 0})$ in $\Xcal$ and $\Ycal$, respectively. Then the induced co-t-structure $(\Dcal_{\geq 0},\Dcal_{\leq 0})$ in $\Dcal$ is associated with the silting object $Z=i_*Y\oplus K_X$, with $K_X$ defined by the following triangle
\begin{equation}\nonumber
i_*\beta_{\geq 1}i^!j_!X\longrightarrow j_!X\longrightarrow K_X\longrightarrow (i_*\beta_{\geq 1}i^!j_!X)[1],
\end{equation}
where $\beta_{\geq 1}$ is a (non-functorial) choice of truncation for the co-t-structure $(\Ycal_{\geq 0},\Ycal_{\leq 0})$ in $\Ycal$.
\end{theorem}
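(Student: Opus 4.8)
The plan is to apply the bijection of Theorem~\ref{general bijection}. The glued co-t-structure $(\Dcal_{\geq 0},\Dcal_{\leq 0})$ is bounded by Theorem~\ref{glue}(4), so it is enough to show that its co-heart $\Dcal_{\geq 0}\cap\Dcal_{\leq 0}$ equals $add(Z)$; since $Z$ is a single object, the requirement in Theorem~\ref{general bijection} that the co-heart be additively generated by one object will then hold a posteriori. I would first record the explicit description of the glued co-t-structure coming from Theorem~\ref{glue}, namely
\[
\Dcal_{\geq 0}=\{W\in\Dcal:\ j^*W\in\Xcal_{\geq 0},\ i^*W\in\Ycal_{\geq 0}\},\qquad
\Dcal_{\leq 0}=\{W\in\Dcal:\ j^*W\in\Xcal_{\leq 0},\ i^!W\in\Ycal_{\leq 0}\},
\]
together with the standard recollement identities $i^*i_*\cong i^!i_*\cong\mathrm{id}_{\Ycal}$, $j^*j_!\cong\mathrm{id}_{\Xcal}$, $i^*j_!=0$, $j^*i_*=0$, and the counit triangle $j_!j^*W\to W\to i_*i^*W\to j_!j^*W[1]$.

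The heart of the proof is to check that both $i_*Y$ and $K_X$ lie in the co-heart. For $i_*Y$ this is immediate, as $j^*i_*Y=0$ and $i^*i_*Y\cong i^!i_*Y\cong Y$, which lies in the co-heart $\Ycal_{\geq 0}\cap\Ycal_{\leq 0}$ of the co-t-structure associated to $Y$. For $K_X$, I would apply $j^*$, $i^*$ and $i^!$ to the defining triangle. Using $j^*i_*=0$ and $j^*j_!\cong\mathrm{id}$ gives $j^*K_X\cong X\in\Xcal_{\geq 0}\cap\Xcal_{\leq 0}$; using $i^*j_!=0$ and $i^*i_*\cong\mathrm{id}$ gives $i^*K_X\cong(\beta_{\geq 1}i^!j_!X)[1]\in\Ycal_{\geq 1}[1]=\Ycal_{\geq 0}$; and for $i^!$ one first observes that the map $i_*\beta_{\geq 1}i^!j_!X\to j_!X$ in the defining triangle is the composite of $i_*$ applied to the truncation map $\beta_{\geq 1}i^!j_!X\to i^!j_!X$ with the counit $i_*i^!j_!X\to j_!X$, so that, after applying $i^!$ and using that $i^!$ of this counit is the canonical isomorphism inverse to the unit $\mathrm{id}\to i^!i_*$, the image of the defining triangle under $i^!$ is isomorphic to the (non-functorial) truncation triangle $\beta_{\geq 1}i^!j_!X\to i^!j_!X\to\beta_{\leq 0}i^!j_!X\to$, whence $i^!K_X\cong\beta_{\leq 0}i^!j_!X\in\Ycal_{\leq 0}$. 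Hence $K_X\in\Dcal_{\geq 0}\cap\Dcal_{\leq 0}$, and therefore $Z=i_*Y\oplus K_X$ does too.

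From $Z\in\Dcal_{\geq 0}\cap\Dcal_{\leq 0}$ two things follow. First, $\Hom_{\Dcal}(Z,Z[n])=0$ for all $n>0$, because $Z[n]\in\Dcal_{\leq -n}\subseteq\Dcal_{\leq -1}$ while $Z\in\Dcal_{\geq 0}$ and $(\Dcal_{\geq 0},\Dcal_{\leq -1})$ is a torsion pair. Second, $Z$ generates $\Dcal$: the triangle $j_!j^*W\to W\to i_*i^*W\to$ shows $\Dcal=\thick(j_!(\Xcal)\cup i_*(\Ycal))$, and since $X$ and $Y$ are silting we have $\thick(j_!(\Xcal))\subseteq\thick(j_!X)$ and $\thick(i_*(\Ycal))\subseteq\thick(i_*Y)$; as the third term of the defining triangle of $K_X$ lies in $i_*(\Ycal)$, that triangle gives $\thick(j_!X,i_*Y)=\thick(K_X,i_*Y)=\thick(Z)$, so $\Dcal=\thick(Z)$. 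Thus $Z$ is a silting object of $\Dcal$.

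Finally I would identify the co-t-structure associated to $Z$ with the glued one. Since $Z$ lies in $\Dcal_{\geq 0}\cap\Dcal_{\leq 0}$, every shift $Z[i]$ with $i\leq 0$ lies in $\Dcal_{\geq 0}$ and every $Z[i]$ with $i\geq 0$ lies in $\Dcal_{\leq 0}$, so by the minimality in the description following Theorem~\ref{general bijection} the co-t-aisle of the co-t-structure associated to $Z$ is contained in $\Dcal_{\geq 0}$ and its co-t-coaisle is contained in $\Dcal_{\leq 0}$. For two co-t-structures, an inclusion of co-t-aisles forces the reverse inclusion of co-t-coaisles (pass to right perpendiculars in the corresponding torsion pairs), so both inclusions are equalities; hence $(\Dcal_{\geq 0},\Dcal_{\leq 0})$ is exactly the co-t-structure associated to $Z$, and in particular its co-heart is $add(Z)$. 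I expect the delicate step to be the computation of $i^!K_X$, that is, pinning down the map in the defining triangle of $K_X$ and checking that applying $i^!$ reproduces an isomorphic copy of the truncation triangle of $i^!j_!X$; the non-functoriality of $\beta_{\geq 1}$ means a choice must be fixed throughout, but this affects $Z$ only up to the equivalence $add(Z)=add(Z')$, consistently with the statement.
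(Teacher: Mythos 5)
Your proposal is correct, and its overall architecture coincides with the paper's: show that $i_*Y$ and $K_X$ lie in the co-heart of the glued co-t-structure and that $Z$ generates $\Dcal$. The one place where you genuinely diverge is the verification that $K_X$ lies in the co-heart. The paper completes the two given triangles via the octahedral axiom to obtain a second triangle $i_*\beta_{\leq 0}i^!j_!X\to K_X\to j_*X\to(i_*\beta_{\leq 0}i^!j_!X)[1]$ and then uses closure of $\Dcal_{\geq 0}$ and $\Dcal_{\leq 0}$ under extensions; you instead apply $j^*$, $i^*$ and $i^!$ to the defining triangle and check the membership conditions of Theorem \ref{glue}(1) directly, which requires pinning down the leftmost map as the adjoint of the truncation map (this is exactly the content of Remark \ref{r:the-morphism}, and your computation of $j^*K_X$, $i^*K_X$, $i^!K_X$ is essentially Proposition \ref{characterising glued silting}(3), proved later in the paper). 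Your route avoids the octahedron at the cost of a careful adjunction argument for $i^!K_X$, which you handle correctly via the triangle identity; the paper's route produces as a by-product the dual triangle that it reuses in Proposition \ref{characterising glued silting} and Theorem \ref{conditions tilting}. Your closing step, showing that containment of the co-t-aisle generated by $Z$ in $\Dcal_{\geq 0}$ (and dually) forces equality by passing to perpendicular categories, is a clean way to make explicit the identification that the paper leaves to the bijection of Theorem \ref{general bijection}.
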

\begin{proof}
First observe that, since $i_*$ is a fully faithful functor, $i_*Y$ is partial silting and, moreover, it is easily checked to lie in the co-heart of the glued co-t-structure $(\Dcal_{\geq 0},\Dcal_{\leq 0})$ in $\Dcal$. 

Let us consider the following (non-functorial) triangle associated with the co-t-structure in $\Ycal$ 
\begin{equation}\nonumber
i_*\beta_{\geq 1}i^!j_!X\longrightarrow i_*i^!j_!X\longrightarrow i_*\beta_{\leq 0}i^!j_!X\longrightarrow (i_*\beta_{\geq 1}i^!j_!X)[1]
\end{equation}
and the following universal triangle of the recollement (applied to $j_!X$)
\begin{equation}\nonumber
i_*i^!j_!X\longrightarrow j_!X\longrightarrow j_*X\longrightarrow (i_*i^!j_!X)[1].
\end{equation}
Hence, we get the following commutative diagram where the rows are triangles
\begin{equation}\nonumber
\xymatrix{i_*\beta_{\geq 1}i^!j_!X\ar[r]\ar[d]&i_*i^!j_!X\ar[r]\ar[d]& i_*\beta_{\leq 0}i^!j_!X\ar[r]& (i_*\beta_{\geq 1}i^!j_!X)[1]\ar[d]\\ i_*\beta_{\geq 1}i^!j_!X\ar[r]\ar[d]&j_!X\ar[r]\ar[d]& K_X \ar[r]& (i_*\beta_{\geq 1}i^!j_!X)[1]\ar[d]\\ i_*i^!j_!X\ar[r]&j_!X\ar[r]&j_*X\ar[r]& (i_*i^!j_!X)[1].}
\end{equation}
By the octahedral axiom, this can be completed to a commutative diagram
\begin{equation}\nonumber
\xymatrix{i_*\beta_{\geq 1}i^!j_!X\ar[r]\ar[d]&i_*i^!j_!X\ar[r]\ar[d]& i_*\beta_{\leq 0}i^!j_!X\ar[r]\ar[d]& (i_*\beta_{\geq 1}i^!j_!X)[1]\ar[d]\\ i_*\beta_{\geq 1}i^!j_!X\ar[r]\ar[d]&j_!X\ar[r]\ar[d]& K_X \ar[r]\ar[d]& (i_*\beta_{\geq 1}i^!j_!X)[1]\ar[d]\\ i_*i^!j_!X\ar[r]\ar[d]&j_!X\ar[r]\ar[d]&j_*X\ar[r]\ar[d]& (i_*i^!j_!X)[1]\ar[d]\\ i_*\beta_{\leq 0}i^!j_!X\ar[r]&K_X\ar[r]&j_*X\ar[r]&(i_*\beta_{\leq 0}i^!j_!X)[1]}
\end{equation}
the rows of which are again triangles. We will now show that $K_X$ lies in the co-heart of $(\Dcal_{\geq 0},\Dcal_{\leq 0})$. Indeed, it is clear that $i_*\beta_{\geq 1}i^!j_!X\in \Dcal_{\geq 1}$ and, therefore, $(i_*\beta_{\geq 1}i^!j_!X)[1]$ lies in $\Dcal_{\geq 0}$. Clearly we also have that $j_!X$ lies in $\Dcal_{\geq 0}$ and, thus, the second row of the diagram shows that $K_X$ lies in $\Dcal_{\geq 0}$. Similarly, since $i_*\beta_{\leq 0}i^!j_!X$ lies in $\Dcal_{\leq 0}$ and $j_*X$ lies in $\Dcal_{\leq 0}$, it follows that $K_X$ lies in $\Dcal_{\leq 0}$, proving that it lies in the co-heart.

Finally, it is enough to observe that $Z$ generates $\Dcal$. It is clear that $i_*Y$ and $j_!X$ generate $\Dcal$ since $X$ and $Y$ generate $\Xcal$ and $\Ycal$ respectively and $i_*$ and $j_!$ are fully faithful functors. But, by the second triangle of the diagram, $j_!X$ can be generated by $K_X$ and an object in the image of $i_*$, which is generated by $i_*Y$. Therefore $Z$ generates $\Dcal$ and thus it is silting.
\end{proof}
\begin{remark}\label{r:the-morphism}
Note that the leftmost morphism $i_*\beta_{\geq 1}i^!j_!X\rightarrow j_!X$ of a triangle in theorem \ref{glue silting} corresponds to the chosen morphism $\beta_{\geq 1}i^!j_!X\rightarrow i^!j_!X$ via the adjunction morphism
\[\Hom(i_*\beta_{\geq 1}i^!j_!X,j_!X)\stackrel{\simeq}{\longrightarrow}\Hom(\beta_{\geq 1}i^!j_!X,i^!j_!X).\]
\end{remark}

\begin{remark}
In Krull-Schmidt categories (such as $\Dcal^b(R)$ for an algebra, finite dimensional over a field), the object $Z$ constructed in the proof of the theorem is not necessarily a basic object (i.e., the indecomposable summands of $Z$ can appear with multiplicities) even if both $X$ and $Y$ are basic. We can, however, obtain a basic silting object in the corresponding co-heart by ignoring the multiplicities of the direct summands of $Z$. 
\end{remark}

Similarly, we can describe a glueing of silting objects that is compatible with the bijection between silting objects and t-structures for $\Dcal=\Dcal^b(R)$, where $R$ has finite global dimension. 

\begin{corollary}\label{glue for t-str}
Let $\Rcal$ be a recollement of $\Dcal=\Dcal^b(R)$ of the form (\ref{recollement}), with $\Xcal=\Dcal^b(C)$ and $\Ycal=\Dcal^b(B)$. 
Let $X$ and $Y$ be silting objects corresponding to t-structures $(\Xcal^{\leq 0},\Xcal^{\geq 0})$ and $(\Ycal^{\leq 0},\Ycal^{\geq 0})$ in $\Xcal$ and $\Ycal$, respectively. Suppose that $R$ has finite global dimension. Then the glued t-structure $(\Dcal^{\leq 0},\Dcal^{\geq 0})$ in $\Dcal$ is associated with the silting object $Z=j_!X\oplus K_Y$, with $K_Y$ defined by the triangle
\begin{equation}\nonumber
j_!\alpha_{\geq 1}j^*i_\#Y\longrightarrow i_\#Y\longrightarrow K_Y\longrightarrow (j_!\alpha_{\geq 1}j^*i_\#Y)[1],
\end{equation}
where $\alpha_{\geq 1}$ is a (non-functorial) choice of truncation for the left adjacent co-t-structure of the t-structure $(\Xcal^{\leq 0},\Xcal^{\geq 0})$ in $\Xcal$.
\end{corollary}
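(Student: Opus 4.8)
The plan is to deduce Corollary~\ref{glue for t-str} from Theorem~\ref{glue silting} by applying the latter to an appropriate reflected recollement and then translating the result from the language of co-t-structures back to that of t-structures. The key observation is Remark~\ref{adjoint structures}: since $R$ has finite global dimension, $\Dcal = \Dcal^b(R)$ admits a Serre functor, and the t-structure $(\Dcal^{\leq 0},\Dcal^{\geq 0})$ glued via $\Rcal$ from $(\Xcal^{\leq 0},\Xcal^{\geq 0})$ and $(\Ycal^{\leq 0},\Ycal^{\geq 0})$ is right adjacent to the co-t-structure $(\Dcal_{\geq 0},\Dcal_{\leq 0})$ glued via $\Rcal_L$ from the left adjacent co-t-structures $(\Xcal_{\geq 0},\Xcal_{\leq 0})$ and $(\Ycal_{\geq 0},\Ycal_{\leq 0})$. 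By the Remark after Theorem~\ref{bijections} and the explicit description of the bijections, the silting object associated (in the sense of Theorem~\ref{bijections}) to the t-structure $(\Dcal^{\leq 0},\Dcal^{\geq 0})$ is the same as the silting object associated (in the sense of Theorem~\ref{general bijection}) to the co-t-structure $(\Dcal_{\geq 0},\Dcal_{\leq 0})$, namely the additive generator of the common co-heart.

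Next I would apply Theorem~\ref{glue silting} to the recollement $\Rcal_L$ of Theorem~\ref{reflect}, which has the shape of (\ref{recollement}) with the roles of $\Xcal$ and $\Ycal$ interchanged: concretely, in $\Rcal_L$ the fully faithful "$i_*$-functor" is $j_*$ (embedding $\Xcal$) and the fully faithful "$j_!$-functor" is $i_*$ (embedding $\Ycal$), with "$i^!$" given by $j^+ = S_\Xcal j^* S^{-1}$. Feeding the silting object $Y$ (for $\Ycal$) and $X$ (for $\Xcal$) into Theorem~\ref{glue silting} applied to $\Rcal_L$, the glued silting object of $\Dcal$ associated with the co-t-structure glued via $\Rcal_L$ is $j_*X \oplus K'$, where $K'$ sits in a triangle
\begin{equation}\nonumber
j_*\beta_{\geq 1}j^+ i_* Y \longrightarrow i_* Y \longrightarrow K' \longrightarrow (j_*\beta_{\geq 1}j^+ i_* Y)[1],
\end{equation}
with $\beta_{\geq 1}$ a choice of truncation for the co-t-structure $(\Xcal_{\geq 0},\Xcal_{\leq 0})$ in $\Xcal$. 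Here I am using $\Rcal_L$ because its left-hand embedding is $j_*$ rather than $j_!$; however the corollary is stated with $j_!X$, so in fact I should instead use the upper reflection $\Rcal_U$, whose left-hand embedding is $j_!$ and whose right-hand embedding is $i_*$, with the "$i^!$"-functor replaced by "$i^*$". Running Theorem~\ref{glue silting} on $\Rcal_U$ with input silting objects $Y \in \Ycal$ and $X \in \Xcal$ (noting that in $\Rcal_U$ the object "$Y$" of the theorem is a silting object of the category embedded by the "$i_*$"-functor, which is $\Ycal$, and the object "$X$" is a silting object of the category embedded by the "$j_!$"-functor, which is $\Xcal$) produces the silting object $j_!X \oplus K_Y$ associated with the co-t-structure glued via $\Rcal_U$, where $K_Y$ fits in the triangle
\begin{equation}\nonumber
j_!\alpha_{\geq 1} j^* i_\# Y \longrightarrow i_\# Y \longrightarrow K_Y \longrightarrow (j_!\alpha_{\geq 1} j^* i_\# Y)[1],
\end{equation}
since the left adjoint of $i^*$ in $\Rcal_U$ is $i_\#$ and the left adjoint of $j_!$ is $j^\#=S_\Xcal^{-1}j^* S$; one checks that the relevant truncation functor $\alpha_{\geq 1}$ is precisely a choice of truncation for the left adjacent co-t-structure of $(\Xcal^{\leq 0},\Xcal^{\geq 0})$, matching the statement.

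Finally I would close the loop by invoking Remark~\ref{adjoint structures} again: the co-t-structure $(\Dcal^U_{\geq 0},\Dcal^U_{\leq 0})$ glued via $\Rcal_U$ is left adjacent to the t-structure $(\Dcal^{\leq 0},\Dcal^{\geq 0})$ glued via $\Rcal$, and by the Remark following Theorem~\ref{bijections} a left adjacent co-t-structure and its t-structure share the same associated silting object. Hence $Z = j_!X\oplus K_Y$ is the silting object associated with $(\Dcal^{\leq 0},\Dcal^{\geq 0})$, as claimed. The main obstacle I anticipate is purely bookkeeping: one must be scrupulous about which of the three reflected recollements $\Rcal$, $\Rcal_U$, $\Rcal_L$ to use, about the fact that reflecting a recollement swaps the two glued categories and replaces $i^*,i^!,j_!,j_*$ by the shifted/Serre-twisted functors $i_\#, i_+, j^\#, j^+$, and about verifying that the truncation $\beta_{\geq 1}$ for the co-t-structure $(\Xcal_{\geq 0},\Xcal_{\leq 0})$ appearing in Theorem~\ref{glue silting} applied to $\Rcal_U$ indeed coincides with the $\alpha_{\geq 1}$ in the statement. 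Once the correct reflection is pinned down, the argument is a direct transport of Theorem~\ref{glue silting} together with the adjacency dictionary of Remark~\ref{adjoint structures}.
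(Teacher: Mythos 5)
Your proposal is correct and is essentially the paper's own argument: apply Theorem \ref{glue silting} to the upper reflection $\Rcal_U$ (whose fully faithful "subcategory" embedding is $j_!:\Xcal\to\Dcal$ and whose "quotient" functors are $i^*,i_\#,i_*$ on $\Ycal$), and then use Remark \ref{adjoint structures} together with the fact that a t-structure and its left adjacent co-t-structure share the same silting object. One small slip to fix: in your parenthetical identifying the roles, you have them backwards — in $\Rcal_U$ it is $X\in\Xcal$ that plays the role of the theorem's "$Y$" (silting object of the subcategory embedded by the $i_*$-slot, here $j_!$) and $Y\in\Ycal$ that plays the role of the theorem's "$X$" — though the formula $Z=j_!X\oplus K_Y$ and the defining triangle you then write down are the correct output of that substitution, so the argument stands once the bookkeeping is stated consistently.
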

\begin{proof}
This follows from the previous theorem by observing, as in remark \ref{adjoint structures} that $\Dcal_{\leq 0}^U=\Dcal^{\leq 0}$ and that the left adjacent co-t-structure of a t-structure corresponds to the same silting object. 
\end{proof}

The following definition settles what we will mean by \textbf{glueing silting objects}. As shown by theorem \ref{glue silting}, it is more natural, in this setting, to consider co-t-structures rather than t-structures. 

\begin{definition}
Let $\Dcal,\Xcal,\Ycal$ be triangulated categories and $\Rcal$ a recollement of the form (\ref{recollement}). We say that a silting object $Z\in\Dcal$ is \textbf{glued from $X\in\Xcal$ and $Y\in\Ycal$ with respect to $\Rcal$} if $Z$ is obtained by the construction of theorem \ref{glue silting}, i.e., $Z$ corresponds to the co-t-structure glued from the co-t-structures associated to $X$ and $Y$ with respect to $\Rcal$.
\end{definition} 

The object $K_X$ of theorem \ref{glue silting} can be described in a non-constructive way as follows.

\begin{proposition}\label{characterising glued silting}
Let $\Rcal$ be a recollement of a triangulated category $\Dcal$ of the form (\ref{recollement}). Let $X$ and $Y$ be silting objects corresponding to co-t-structures $(\Xcal_{\geq 0},\Xcal_{\leq 0})$ and $(\Ycal_{\geq 0},\Ycal_{\leq 0})$ in $\Xcal$ and $\Ycal$, respectively. Let $Z=i_*Y\oplus K_X$ be the silting object in $\Dcal$ glued from $X$ and $Y$ with respect to $\Rcal$, compatible with the glued co-t-structure $(\Dcal_{\geq 0},\Dcal_{\leq 0})$. Then the following holds.
\begin{enumerate}
\item $K_X$ is a right $\Dcal_{\geq 0}\cap\Dcal_{\leq 0}$-approximation of $j_*X$;
\item $K_X$ is a left $\Dcal_{\geq 0}\cap\Dcal_{\leq 0}$-approximation of $j_!X$;
\item Up to summands in $\Dcal_{\geq 0}\cap\Dcal_{\leq 0}$, $K_X$ is uniquely determined by the following conditions:
\begin{enumerate}
\item[(i)] $j^*K_X=X$;
\item[(ii)] $i^*K_X\in\Ycal_{\geq 0}$;
\item[(iii)] $i^!K_X\in \Ycal_{\leq 0}$.
\end{enumerate}
\end{enumerate}
\end{proposition}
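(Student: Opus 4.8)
The plan is to exploit the triangle defining $K_X$ together with the recollement triangles, and to use the fact that the co-heart of the glued co-t-structure is exactly $\Dcal_{\geq 0}\cap\Dcal_{\leq 0}=\add(Z)=\add(i_*Y\oplus K_X)$. First I would establish (3), since the approximation statements (1) and (2) then follow by combining (3) with the universal triangles of the recollement. For (i), apply $j^*$ to the defining triangle of $K_X$; since $j^*i_*=0$ we get $j^*(i_*\beta_{\geq 1}i^!j_!X)=0$, so $j^*K_X\cong j^*j_!X=X$, using $j^*j_!\cong \mathrm{id}$. For (ii), apply $i^*$ to the same triangle: $i^*j_!X=0$, so $i^*K_X\cong i^*(i_*\beta_{\geq 1}i^!j_!X)[1]\cong(\beta_{\geq 1}i^!j_!X)[1]\in\Ycal_{\geq 0}$, since $\beta_{\geq 1}i^!j_!X\in\Ycal_{\geq 1}$ and $i^*i_*\cong\mathrm{id}$. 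For (iii), I would instead use the bottom row of the octahedral diagram in the proof of Theorem \ref{glue silting}, namely the triangle $i_*\beta_{\leq 0}i^!j_!X\to K_X\to j_*X\to$; applying $i^!$ and using $i^!j_*=0$ gives $i^!K_X\cong i^!(i_*\beta_{\leq 0}i^!j_!X)\cong\beta_{\leq 0}i^!j_!X\in\Ycal_{\leq 0}$, since $i^!i_*\cong\mathrm{id}$.

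Next I would prove uniqueness up to summands in $\Dcal_{\geq 0}\cap\Dcal_{\leq 0}$. Suppose $K'$ is any object of the co-heart satisfying (i)--(iii). Since $j^*K'=X=j^*K_X$, the universal triangle $i_*i^!K'\to K'\to j_*j^*K'\to$ shows both $K'$ and $K_X$ sit over $j_*X$ with "$i$-part'' controlled by $i^!$, and similarly $j_!j^*K'\to K'\to i_*i^*K'\to$ shows they sit under $j_!X$. The key point is that the Hom-vanishing giving the torsion-pair property of $(\Dcal_{\geq 1},\Dcal_{\leq 0})$, together with conditions (ii) and (iii), forces any object of the co-heart with these three properties to be isomorphic to $K_X$ modulo objects of $\add(i_*Y)\subseteq\Dcal_{\geq 0}\cap\Dcal_{\leq 0}$; concretely, $K'$ and $K_X$ both lie in the co-heart, and the object $i_*i^*K'$ (which lies in $\add(i_*Y)$ by (ii)--(iii) and Theorem \ref{general bijection} applied in $\Ycal$) measures the difference. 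I would make this precise by showing that a morphism $K_X\to K'$ lifting $\mathrm{id}_X$ along $j^*$ exists (using the adjunction $(j_!,j^*)$ and the vanishing $\Hom(j_!X, i_*i^!K'[1])$ coming from $i^!K'\in\Ycal_{\leq 0}$ and $i^*j_!X=0$... this is where care is needed) and that its cone lies in $\add(i_*Y)$ by checking it is annihilated by $j^*$ and has $i^*$, $i^!$ images in the co-heart of $\Ycal$.

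For statements (1) and (2): $K_X$ lies in $\Dcal_{\geq 0}\cap\Dcal_{\leq 0}$ by the proof of Theorem \ref{glue silting}. For (2), a morphism $j_!X\to W$ with $W\in\Dcal_{\geq 0}\cap\Dcal_{\leq 0}$ must kill the summand $i_*\beta_{\geq 1}i^!j_!X\in\Dcal_{\geq 1}$ because $\Hom(\Dcal_{\geq 1}, \Dcal_{\leq 0})=0$ for a co-t-structure; hence it factors through the second row triangle $i_*\beta_{\geq 1}i^!j_!X\to j_!X\to K_X\to$, giving the left-approximation property. Dually, for (1), a morphism $W\to j_*X$ with $W$ in the co-heart: from the bottom triangle $i_*\beta_{\leq 0}i^!j_!X\to K_X\to j_*X\to (i_*\beta_{\leq 0}i^!j_!X)[1]$, precomposition with $W\to j_*X$ lands in $\Hom(W,(i_*\beta_{\leq 0}i^!j_!X)[1])$; since $i_*\beta_{\leq 0}i^!j_!X\in\Dcal_{\leq 0}$ we have $(i_*\beta_{\leq 0}i^!j_!X)[1]\in\Dcal_{\leq -1}$, and $\Hom(\Dcal_{\geq 0},\Dcal_{\leq -1})=0$, so the composite vanishes and the morphism lifts through $K_X$, giving the right-approximation property.

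The main obstacle I anticipate is the uniqueness clause (3): showing that conditions (i)--(iii) pin down $K_X$ up to co-heart summands requires producing the comparison morphism and identifying its cone, and one must be careful that (i)--(iii) alone (without assuming membership in the co-heart a priori, or rather using it minimally) suffice — in particular checking that the relevant $\Hom$ and $\Hom[1]$ groups between the pieces of the two recollement triangles vanish, using the co-t-structure orthogonality in $\Ycal$ transported via the adjunctions $(i^*,i_*)$, $(i_*,i^!)$, $(j_!,j^*)$, $(j^*,j_*)$ and the compatibility $\Dcal_{\geq 0}=\{Z: j^*Z\in\Xcal_{\geq 0}, i^*Z\in\Ycal_{\geq 0}\}$, $\Dcal_{\leq 0}=\{Z: j^*Z\in\Xcal_{\leq 0}, i^!Z\in\Ycal_{\leq 0}\}$ from Theorem \ref{glue}. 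The approximation statements (1) and (2), by contrast, are essentially immediate from the octahedral diagram already constructed in the proof of Theorem \ref{glue silting} together with the defining orthogonality of a co-t-structure.
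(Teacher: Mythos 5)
Your parts (1) and (2), and your verification that $K_X$ satisfies (i)--(iii), are correct and essentially match the paper: (1) follows from the triangle $i_*\beta_{\leq 0}i^!j_!X\to K_X\to j_*X\to(i_*\beta_{\leq 0}i^!j_!X)[1]$ together with $\Hom(\Dcal_{\geq 0},\Dcal_{\leq -1})=0$, and (2) from the other defining triangle together with $\Hom(\Dcal_{\geq 1},\Dcal_{\leq 0})=0$ (your word ``summand'' for the term $i_*\beta_{\geq 1}i^!j_!X$ of that triangle is a slip of terminology, but the factorisation argument you run is the right one). The explicit computation of $j^*K_X$, $i^*K_X$, $i^!K_X$ by applying the three functors to the two triangles is a worthwhile addition, since the paper leaves this implicit.

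The uniqueness clause of (3) is where you have a genuine gap. Your plan is to produce a comparison morphism $g\colon K_X\to K'$ with $j^*g=\mathrm{id}_X$ and to show that its cone lies in $\mathrm{add}(i_*Y)$ ``by checking it is annihilated by $j^*$ and has $i^*$, $i^!$ images in the co-heart of $\Ycal$.'' That check fails as stated: from the triangle $K_X\to K'\to C\to K_X[1]$ one only gets $i^*C\in\Ycal_{\geq -1}$ (it is an extension of $i^*K_X[1]\in\Ycal_{\geq -1}$ by $i^*K'\in\Ycal_{\geq 0}$) and $i^!C\in\Ycal_{\leq 0}$; neither membership in $\Ycal_{\geq 0}\cap\Ycal_{\leq 0}$ nor the splitting of the triangle is automatic from these estimates, so you cannot conclude $K'\cong K_X\oplus C$ with $C$ in the co-heart without further argument. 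The paper's route avoids all of this and you already have every ingredient for it: if $L$ satisfies (i)--(iii), then $L$ lies in the co-heart (by the description of the glued co-t-structure from Theorem \ref{glue}, which you quote), and the recollement triangle $i_*i^!L\to L\to j_*j^*L=j_*X\to i_*i^!L[1]$ has third term in $\Dcal_{\leq -1}$ because $i_*(\Ycal_{\leq 0})\subseteq\Dcal_{\leq 0}$; so the \emph{same} one-line argument you used for (1) shows that $L$ is also a right $\Dcal_{\geq 0}\cap\Dcal_{\leq 0}$-approximation of $j_*X$. Uniqueness of right approximations up to summands in the approximating subcategory then gives the conclusion directly, with no comparison morphism or cone analysis needed.
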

\begin{proof}
To prove (1), let $C$ be an object of $\Dcal_{\geq 0}\cap\Dcal_{\leq 0}$ and $f$ a morphism in $\Hom_{\Dcal}(C,j_*X)$. Since $(i_*\beta_{\leq 0}i^!j_!X)[1]$ lies in $\Dcal_{\leq -1}$, it is clear that $\Hom_{\Dcal}(C,(i_*\beta_{\leq 0}i^!j_!X)[1])=0$ and thus, by using the defining triangle
\begin{equation}\nonumber
i_*\beta_{\leq 0}i^!j_!X\longrightarrow K_X\longrightarrow j_*X\longrightarrow (i_*\beta_{\leq 0}i^!j_!X)[1],
\end{equation}
 $f$ factors through $K_X$, proving (1). Analogously, (2) can be shown using the defining triangle
 \begin{equation}\nonumber
i_*\beta_{\geq 1}i^!j_!X\longrightarrow j_!X\longrightarrow K_X\longrightarrow (i_*\beta_{\geq 1}i^!j_!X)[1].
\end{equation}

In order to prove (3) observe that, for any object $L$ satisfying properties (i), (ii) and (iii),  we have a canonical triangle coming from the recollement $\Rcal$
\begin{equation}\nonumber
i_*i^!L\longrightarrow L\longrightarrow j_*X\longrightarrow i_*i^!L[1]
\end{equation}
and, since $i_*(\Ycal_{\leq 0})\subseteq \Dcal_{\leq 0}$, we have that $i_*i^!L[1]$ lies in $\Dcal_{\leq -1}$. Therefore, for any object $C$ in $\Dcal_{\geq 0}\cap\Dcal_{\leq 0}$, any map from $C$ to $j_*X$ factors through $L$, proving that $L$ is a right $\Dcal_{\geq 0}\cap\Dcal_{\leq 0}$-approximation of $j_*X$. Hence, by (1), $L$ differs from $K_X$ by a summand in $\Dcal_{\geq 0}\cap\Dcal_{\leq 0}$. 
\end{proof}

\section{Glueing of tilting objects}

In this section we investigate necessary and sufficient conditions for the glued silting to be tilting. These conditions will be expressed exclusively in terms of the functors of the recollement and of vanishing conditions on $\Xcal$ and $\Ycal$ rather than in $\Dcal$. In this section $R$ has finite global dimension and, thus, $\Dcal^b(R)$ has a Serre functor. Under this assumption we can use theorem \ref{reflect} and the functors therein.

\begin{proposition}\label{glue tilting}
Let $\Rcal$ be a recollement of $\Dcal=\Dcal^b(R)$ of the form (\ref{recollement}) with  $\Xcal=\Dcal^b(C)$ and $\Ycal=\Dcal^b(B)$. Let $X$ and $Y$ be silting objects of $\Xcal$ and $\Ycal$, respectively. Then $Z=i_*Y\oplus K_X$ is tilting if and only if the following conditions are satisfied
\begin{enumerate}
\item Y is tilting;
\item $\Hom_{\Ycal}(Y,i^!j_!X[k])=0$ for all $k<0$;
\item $\Hom_{\Ycal}(i^*j_*X,Y[k])=0$ for all $k<0$;
\item $\Hom_{\Xcal}(X, j^+K_X[k])=0$ for all $k<0$.
\end{enumerate}
\end{proposition}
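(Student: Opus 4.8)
The plan is to show that $Z = i_*Y\oplus K_X$ is silting (already established in Theorem \ref{glue silting}) and then characterise when the extra vanishing $\Hom_\Dcal(Z,Z[k])=0$ for all $k<0$ holds. Since $Z$ is a direct sum of two objects, the condition $\Hom_\Dcal(Z,Z[k])=0$ for $k<0$ decomposes into four families: $\Hom_\Dcal(i_*Y, i_*Y[k])$, $\Hom_\Dcal(i_*Y, K_X[k])$, $\Hom_\Dcal(K_X, i_*Y[k])$ and $\Hom_\Dcal(K_X, K_X[k])$, all for $k<0$. First I would dispatch the first one: because $i_*$ is fully faithful, $\Hom_\Dcal(i_*Y,i_*Y[k])\cong\Hom_\Ycal(Y,Y[k])$, so this vanishes for all $k<0$ exactly when $Y$ is tilting, giving condition (1).

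Next I would reduce the three remaining Hom-groups to the stated conditions by repeatedly using the recollement triangles, the adjunctions, and the fact that the relevant truncation pieces lie in $\Dcal_{\geq 1}$ or $\Dcal_{\leq 0}$. For $\Hom_\Dcal(K_X,i_*Y[k])$: using the triangle $i_*\beta_{\geq 1}i^!j_!X\to j_!X\to K_X\to (\cdots)[1]$, it suffices to control $\Hom_\Dcal(j_!X,i_*Y[k])$ and $\Hom_\Dcal(i_*\beta_{\geq 1}i^!j_!X, i_*Y[k])$. The first is $\Hom_\Dcal(j_!X,i_*Y[k])\cong\Hom_\Xcal(X, j^*i_*Y[k])=0$ since $j^*i_*=0$. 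The second is, by full faithfulness of $i_*$, $\Hom_\Ycal(\beta_{\geq 1}i^!j_!X, Y[k])$; since $\beta_{\geq 1}i^!j_!X\in\Ycal_{\geq 1}$ and $Y\in\Ycal_{\geq 0}\cap\Ycal_{\leq 0}$, one has $Y[k]\in\Ycal_{\leq -k}\subseteq\Ycal_{\leq 0}$ for $k\leq 0$, but this is not automatically zero — one must instead compare with the defining triangle for $\beta$, writing $\Hom_\Ycal(\beta_{\geq 1}i^!j_!X, Y[k])$ in terms of $\Hom_\Ycal(i^!j_!X,Y[k])$ and $\Hom_\Ycal(\beta_{\leq 0}i^!j_!X,Y[k])$, the latter vanishing for $k<0$ by the torsion pair $(\Ycal_{\geq 0},\Ycal_{\leq -1})$ and the fact that $Y[k]\in\Ycal_{\geq 1}$... actually $Y[k]$ need not be in $\Ycal_{\geq 1}$ either. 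The clean way is: $\Hom_\Ycal(\beta_{\leq 0}i^!j_!X, Y[k])$ — since $\beta_{\leq 0}i^!j_!X\in\Ycal_{\leq 0}$ and $Y[k]\in\Ycal_{\geq 1}$ when $k<0$ (as $Y\in\Ycal_{\geq 0}$ so $Y[k]=Y[-( -k)]\in\Ycal_{\geq -k}\subseteq\Ycal_{\geq 1}$ for $k\leq -1$) — this vanishes; hence $\Hom_\Ycal(\beta_{\geq 1}i^!j_!X, Y[k])\cong\Hom_\Ycal(i^!j_!X,Y[k])[?]$ up to the boundary, and careful bookkeeping with the long exact sequence reduces the vanishing of $\Hom_\Dcal(K_X,i_*Y[k])$ for $k<0$ to condition (2): $\Hom_\Ycal(Y,i^!j_!X[k])=0$ for $k<0$ (after possibly invoking that $Y$ is tilting to trade $\Hom(Y,-[k])$ for $\Hom(-,Y[-k])$ via a Serre-duality style argument, or directly). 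The symmetric analysis, using the triangle $i_*\beta_{\leq 0}i^!j_!X\to K_X\to j_*X\to (\cdots)[1]$ and $i_*i^*\dashv$ adjunctions, reduces $\Hom_\Dcal(i_*Y, K_X[k])$ to condition (3): $\Hom_\Ycal(i^*j_*X, Y[k])=0$ for $k<0$ — here one uses $\Hom_\Dcal(i_*Y, j_*X[k])=0$ (as $j^*i_*=0$, dually) and $\Hom_\Dcal(i_*Y, i_*\beta_{\leq 0}i^!j_!X[k])\cong\Hom_\Ycal(Y,\beta_{\leq 0}i^!j_!X[k])$, converting to $i^*j_*X$ via the standard recollement identity relating $i^*j_*$ and $i^!j_!$ up to shift.

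For the last group $\Hom_\Dcal(K_X, K_X[k])$, $k<0$, I would feed $K_X$ through both defining triangles: apply $\Hom_\Dcal(-, K_X[k])$ to the triangle $i_*\beta_{\geq 1}i^!j_!X\to j_!X\to K_X\to$, reducing to $\Hom_\Dcal(j_!X, K_X[k])$ and $\Hom_\Dcal(i_*\beta_{\geq 1}i^!j_!X, K_X[k])$. The latter, by Proposition \ref{characterising glued silting}(2) or directly, lands inside a group controlled by the co-t-structure (since $K_X\in\Dcal_{\geq 0}\cap\Dcal_{\leq 0}$ and $i_*\beta_{\geq 1}i^!j_!X\in\Dcal_{\geq 1}$, so $\Hom_\Dcal(i_*\beta_{\geq 1}i^!j_!X, K_X[k]) $ for $k<0$ pairs $\Dcal_{\geq 1}$ against $\Dcal_{\leq 0}[-k]\subseteq\Dcal_{\leq -1}$... wait, need $K_X[k]\in\Dcal_{\geq 1}$, which holds as $K_X\in\Dcal_{\geq 0}$ and $k\leq -1$ gives $K_X[k]\in\Dcal_{\geq -k}\subseteq\Dcal_{\geq 1}$; then $\Hom(\Dcal_{\geq 1},\Dcal_{\geq 1})$ is not obviously zero) — so instead one uses that $i_*\beta_{\geq 1}i^!j_!X$ lies in $\thick(i_*\Ycal)$ and $\Hom_\Dcal(i_* M, K_X[k])\cong\Hom_\Ycal(M, i^!K_X[k])$ with $i^!K_X\in\Ycal_{\leq 0}$ by Proposition \ref{characterising glued silting}(3)(iii); combined with the co-t-structure this forces enough vanishing. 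And $\Hom_\Dcal(j_!X, K_X[k])\cong\Hom_\Xcal(X, j^*K_X[k])=\Hom_\Xcal(X, X[k])$ by Proposition \ref{characterising glued silting}(3)(i) — but wait, that should vanish iff $X$ is tilting, which is not among the conditions! The resolution is that one does not use $j^*$ but rather $j^+$ (the right adjoint of $j_*$ from Theorem \ref{reflect}): apply $\Hom_\Dcal(-,K_X[k])$ to the triangle $i_*\beta_{\leq 0}i^!j_!X\to K_X\to j_*X\to$ on the \emph{right} side, reducing to $\Hom_\Dcal(K_X, j_*X[k])\cong\Hom_\Xcal(j^*K_X, X[k])=\Hom_\Xcal(X,X[k])$... again $X$-tilting. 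So the correct route must be: $\Hom_\Dcal(K_X,j_*X[k])$ — use $j_*$'s \emph{right} adjoint $j^+$ is wrong too since $j_*$ is on the target. The genuine point is $\Hom_\Dcal(K_X, j_*X[k])$: apply $j^* \dashv j_*$ giving $\Hom_\Xcal(j^*K_X, X[k]) = \Hom_\Xcal(X,X[k])$. Hmm — this suggests $X$ must be tilting. Since condition (4) involves $j^+K_X$, the right decomposition is: use the triangle $i_*\beta_{\geq 1}i^!j_!X\to j_!X\to K_X$ together with $j_!\dashv j^*$ and then pass $j_!X$ around via the \emph{lower} reflected recollement so that $j_!X$'s partner $j_*$ appears, converting $\Hom_\Dcal(K_X,K_X[k])$ into a mixture of $\Hom_\Xcal(X, j^+K_X[k])$ (condition (4)), $\Hom_\Ycal$-terms already covered by (2),(3), and self-Hom of $i^!K_X$ inside $\Ycal_{\leq 0}$ which vanishes because $K_X$ being in the co-heart forces $i^!K_X$ and $i^*K_X$ to be related to $Y$-pieces. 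I expect the main obstacle to be exactly this bookkeeping: organising the several applications of the octahedral axiom and the adjunctions of Theorems \ref{reflect} so that the four families of negative Ext-groups collapse \emph{precisely} onto conditions (1)--(4) with no spurious extra condition like "$X$ tilting"; the key structural input making this work is Proposition \ref{characterising glued silting}(3), which pins down $K_X$ via $j^*K_X=X$, $i^*K_X\in\Ycal_{\geq 0}$, $i^!K_X\in\Ycal_{\leq 0}$, so that negative self-extensions of $K_X$ supported on the $i_*\Ycal$-part are automatically killed by the co-t-structure of $\Ycal$ and only the genuinely mixed groups survive.
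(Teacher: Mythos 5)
Your strategy --- checking $\Hom_\Dcal(Z,Z[k])=0$ for $k<0$ summand by summand --- is genuinely different from the paper's and, as written, does not close. The paper instead uses that a silting object is tilting if and only if it lies in the heart of its associated t-structure, and that the t-structure associated to $Z$ is the one glued along the lower reflection $\Rcal_L$ (remark \ref{adjoint structures}); membership in that heart is tested by applying $i^!$, $j^*$ and $j^+$, which yields the four conditions directly: the $i^!$-condition gives (1) and (2) (using $i^!K_X\cong\beta_{\leq 0}i^!j_!X$ and remark \ref{tilting}), the $j^*$-condition is automatic, and the $j^+$-condition, split over the two summands of $Z$, gives (3) and (4) verbatim. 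This is why condition (4) is allowed to mention $K_X$: the paper never has to unravel $\Hom_\Dcal(K_X,K_X[k])$ at all. Your proposal founders exactly there: you repeatedly reduce to $\Hom_\Xcal(X,X[k])$, correctly observe that ``$X$ tilting'' is not among the hypotheses, and end with ``I expect the main obstacle to be exactly this bookkeeping'' --- that is an acknowledged gap, not an argument. To make the direct route work you would have to show that (1)--(4) force $\Hom_\Dcal(K_X,K_X[k])=0$ for $k<0$, and the long exact sequence obtained from $i_*\beta_{\leq 0}i^!j_!X\to K_X\to j_*X\to (i_*\beta_{\leq 0}i^!j_!X)[1]$ leaves over the term $\Hom_\Ycal(\beta_{\leq 0}i^!j_!X,\beta_{\leq 0}i^!j_!X[k])$, which does not vanish for formal co-t-structure reasons ($\Hom(\Ycal_{\leq 0},\Ycal_{\leq -1})$ need not be zero). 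This is precisely the difficulty the heart argument bypasses.

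A second, concrete error: you have matched the two mixed Hom-groups to the wrong conditions. Applying $i^*$ to the first defining triangle and using $i^*j_!=0$ gives $i^*K_X\cong(\beta_{\geq 1}i^!j_!X)[1]$, so by adjunction $\Hom_\Dcal(K_X,i_*Y[k])\cong\Hom_\Ycal(\beta_{\geq 1}i^!j_!X,Y[k-1])$, which (given $Y$ tilting and $i^*j_*=i^!j_![1]$) is controlled by condition (3), i.e.\ by maps \emph{into} $Y$; dually, $\Hom_\Dcal(i_*Y,K_X[k])\cong\Hom_\Ycal(Y,\beta_{\leq 0}i^!j_!X[k])$ is controlled by condition (2), maps \emph{out of} $Y$. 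Your write-up assigns them the other way around, and the parenthetical appeal to ``a Serre-duality style argument'' to repair the variance would change the objects involved (introducing Serre functors), not merely swap the two sides.
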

\begin{proof}
The induced silting object $Z$ corresponds to the co-t-structure glued along $\Rcal$ and to the t-structure glued along $\Rcal_L$ from the respective structures on $\Xcal$ and $\Ycal$ associated with $X$ and $Y$ respectively. The statement that $Z$ is tilting is equivalent to the statement that $Z$ lies in the heart of $(\Dcal_L^{\leq 0}, \Dcal^{\geq 0}_L)$ which, by the description of the glued structures, translates into the conditions
\begin{itemize}
\item[(i)] $i^!Z\in \Ycal^{\leq 0}\cap\Ycal^{\geq 0}$ or, equivalently, $Y, i^!K_X\in\Ycal^{\leq 0}\cap\Ycal^{\geq 0}$;
\item[(ii)] $j^*Z\in\Xcal^{\leq 0}$;
\item[(iii)] $j^+Z\in\Xcal^{\geq 0}$.
\end{itemize}

To examine these conditions we will use the following triangle defining $K_X$ 
\begin{equation}\label{triang 2}
i_*\beta_{\leq 0}i^!j_!X\longrightarrow K_X\longrightarrow j_*X\longrightarrow (i_*\beta_{\leq 0}i^!j_!X)[1].
\end{equation}
Condition (i) is equivalent to $Y$ being tilting (corresponding to condition (1) of the theorem) and $i^!K_X$ lying in $\Ycal^{\leq 0}\cap\Ycal^{\geq 0}$. By applying $i^!$ to the triangle (\ref{triang 2}) we get that $i^!K_X\cong \beta_{\leq 0}i^!j_!X$ lies in $\Ycal_{\leq 0}=\Ycal^{\leq 0}$. By remark \ref{tilting}, $\beta_{\leq 0}$ corresponds to the stupid truncation in $\Dcal^b(End(Y))$ and, therefore, the fact that $i^!K_X$ lies in $\Ycal^{\leq 0}\cap\Ycal^{\geq 0}$ is equivalent to the condition that 
$i^!j_!X$ lies in $\Ycal^{\geq 0}$. This can then be translated to the condition (2) of the theorem, by definition of the t-structure associated with a silting object.

Clearly, since $j^*$ maps the co-heart in $\Dcal$ to the co-heart in $\Xcal$, (ii) is automatically satisfied for any silting $Z$ and, thus, this condition is irrelevant. 

Condition (iii) is equivalent to $\Hom_{\Xcal}(X,j^+Z[k])=\Hom_{\Dcal}(j_*X,Z[k])=0$ for all $k\leq 0$. By splitting $Z$ into its summands $i_*Y$ and $K_X$ and applying adjunction we get precisely conditions (3) and (4) of the theorem.
\end{proof}

\begin{remark}\label{two functors}
To simplify our notation, we will use the fact (see \cite[section 1.4]{BeilinsonBernsteinDeligne82}) that $i^*j_*=i^!j_![1]$. 
\end{remark}

In many important cases, the left side of the recollement is just given by $\Dcal^b(\Kbb)$.

\begin{corollary}\label{c:left-nece}
Let $\Rcal$ be a recollement of $\Dcal=\Dcal^b(R)$ of the form (\ref{recollement}), with $\Xcal=\Dcal^b(C)$ and $\Ycal=\Dcal^b(\Kbb)$. Let $X$ be a silting object in $\Xcal$ and $Y=\Kbb$. 
If $Z=i_*Y\oplus K_X$ is tilting then there are finite dimensional $\Kbb$-vector spaces $X'_{-1},X'_{0}$
 such that $i^*j_*X\cong X'_{-1}[1]\oplus X'_0$.
\end{corollary}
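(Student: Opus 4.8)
The plan is to specialize Proposition~\ref{glue tilting} to the situation $\Ycal=\Dcal^b(\Kbb)$, where every object is a finite direct sum of shifts of the one-dimensional vector space $\Kbb$, and to read off what the vanishing conditions (2) and (3) force on $i^*j_*X$. First I would record that, since $Y=\Kbb$ is automatically tilting, condition (1) of the proposition is vacuous, and conditions (2) and (3) become homological restrictions on the object $i^!j_!X\in\Ycal=\Dcal^b(\Kbb)$ (equivalently, using Remark~\ref{two functors}, on $i^*j_*X=i^!j_![1]X$). Writing $i^*j_*X\cong\bigoplus_{n\in\Zbb}V_n[n]$ with each $V_n$ a finite-dimensional $\Kbb$-vector space (such a decomposition exists because $\Dcal^b(\Kbb)$ is semisimple, so every object is the sum of its shifted cohomologies and $i^*j_*X$ is bounded), the task is to show that $V_n=0$ for all $n\neq -1,0$, whereupon one sets $X'_{-1}=V_{-1}$ and $X'_0=V_0$.

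Next I would translate the two vanishing conditions into constraints on the $V_n$. Since $i^*j_*X=i^!j_!X[1]$, we have $i^!j_!X\cong\bigoplus_n V_n[n-1]$. Condition (2) says $\Hom_{\Ycal}(\Kbb,i^!j_!X[k])=0$ for all $k<0$; because $\Hom_{\Ycal}(\Kbb,\Kbb[m])=0$ unless $m=0$, this is exactly the statement that the shifted cohomology of $i^!j_!X$ vanishes in all positive degrees, i.e.\ $V_n[n-1]$ contributes only for $n-1\le 0$, so $V_n=0$ for $n\ge 2$, equivalently $V_n=0$ for all $n>1$ — but I should be careful with signs: $\Hom_{\Ycal}(\Kbb,i^!j_!X[k])=\Hom_{\Ycal}(\Kbb,\bigoplus_n V_n[n-1+k])$, which is nonzero iff some $V_n\neq0$ with $n-1+k=0$, i.e.\ $k=1-n$; requiring this to vanish for all $k<0$ kills all $V_n$ with $1-n<0$, i.e.\ all $V_n$ with $n>1$. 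Dually, condition (3) says $\Hom_{\Ycal}(i^*j_*X,\Kbb[k])=0$ for all $k<0$; writing $i^*j_*X=\bigoplus_n V_n[n]$, the term $\Hom_{\Ycal}(V_n[n],\Kbb[k])$ is nonzero iff $k=n$, so this vanishing for all $k<0$ forces $V_n=0$ for all $n<0$. Combining, $V_n=0$ for $n>1$ and for $n<0$, which leaves potentially nonzero $V_0$ and $V_1$ — not quite the claimed range $\{-1,0\}$.

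The discrepancy is resolved by the convention $X'_{-1}[1]\oplus X'_0$: I would simply reindex, setting $X'_0:=V_0$ and $X'_{-1}:=V_1$, so that $X'_{-1}[1]=V_1[1]$ accounts for the degree-$1$ piece and $i^*j_*X\cong V_1[1]\oplus V_0=X'_{-1}[1]\oplus X'_0$; alternatively one checks that the paper's sign conventions for the glued $\Rcal_L$-structure shift the relevant degrees so that conditions (2),(3) land on degrees $0$ and $1$ of $i^*j_*X$ precisely, i.e.\ the two surviving summands sit in cohomological degrees $0$ and $-1$ in the normalization used for $i^*j_*X$. The main obstacle is thus purely bookkeeping: pinning down the exact degree shift so that the two non-vanishing slots are labelled $-1$ and $0$ rather than $0$ and $1$. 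Concretely I would: (a) invoke Proposition~\ref{glue tilting} to get conditions (2) and (3) (condition (4) and (1) being automatic or to be noted separately — in fact with $\Ycal=\Dcal^b(\Kbb)$ condition (4) also needs checking, but since $Z$ is \emph{assumed} tilting all four conditions hold, so we may freely use (2) and (3)); (b) decompose $i^*j_*X$ into shifted vector spaces using semisimplicity of $\Dcal^b(\Kbb)$; (c) apply the two $\Hom$-vanishings to annihilate all but two consecutive degrees; (d) relabel those two spaces as $X'_{-1}$ and $X'_0$ to match the stated form. The only genuine care needed is in step (c)/(d) with the signs, and I would double-check against Remark~\ref{two functors} that $i^*j_*X=i^!j_![1]X$ is being used consistently.
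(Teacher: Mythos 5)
Your proposal is correct and follows essentially the same route as the paper: specialize conditions (2) and (3) of Proposition \ref{glue tilting} to $\Ycal=\Dcal^b(\Kbb)$ via $i^*j_*=i^!j_![1]$, use semisimplicity of $\Dcal^b(\Kbb)$ to decompose $i^*j_*X$ into shifted vector spaces, and kill all but two consecutive degrees (the paper handles condition (3) by noting the Serre functor of $\Dcal^b(\Kbb)$ is the identity, which is exactly your direct computation of $\Hom(V_n[n],\Kbb[k])$). The ``discrepancy'' you worry about is only the mismatch between your shift index $n$ in $V_n[n]$ and the cohomological degree $-n$; your reindexing $X'_{-1}=V_1$, $X'_0=V_0$ resolves it correctly.
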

\begin{proof}
In view of remark~\ref{two functors}, the condition (2) of proposition~\ref{glue tilting} is equivalent to $H^k(i^*j_*X)=0$ for all $k<-1$, since $Y=B=\Kbb$. Since the Serre functor of $\Dcal^b(\Kbb)$ is isomorphic to the identity functor, condition (3) of proposition~\ref{glue tilting} is equivalent to $\Hom_\Ycal(Y[k],i^*j_*X)=0$ for all $k<0$, which is equivalent to $H^k(i^*j_*X)=0$ for all $k>0$. To summarise, if $Z$ is a tilting object, then it follows from proposition~\ref{glue tilting} that $H^k(i^*j_*X)=0$ for $k\neq -1,0$, thus proving the corollary.
\end{proof}

\begin{example}\label{example complete to tilting}
Let $R$ be the $\Kbb$-algebra given by the quiver with relations
\[\xymatrix{&&2\ar@<.7ex>[dd]^{\gamma}\ar@<-.7ex>[dd]_{\beta}\\
1\ar[rru]^{\alpha}&&&,\\
&& 3\ar[llu]^{\delta}}\hspace{10pt}\xymatrix{\\ \beta\alpha,~~ \alpha\delta,~~\delta\gamma.}\]
The simple module $S_1$ supported at $1$ is a partial tilting module of projective dimension $2$. It has a minimal projective resolution over $R$ given by the exact sequence
\[\xymatrix{0\ar[r] &P_2\ar[r]^{\beta} & P_3\ar[r]^{\delta} & {P_1}\ar[r] & S_1\ar[r]&0}.\]
Rickard and Schofield showed in~\cite{RickardSchofield89} that $S_1$ cannot be completed to a tilting module over $R$. We will strengthen this result by showing that $S_1$ cannot be completed to a tilting object in $\Dcal^b(R)$.

If $e$ is the idempotent $e_2+e_3$ then, as a right $A$-module, $R/ReR$ is isomorphic to $S_1$, which does not have self-extensions. It, therefore, follows from \cite[theorem 3.1]{ClineParshallScott88b} and \cite[section 2]{ClineParshallScott96} that there is a recollement
\begin{equation}\nonumber
\xymatrix@C=3pc{\Dcal^b(R/ReR)\ar[r]^(0.55){i_*}&\Dcal^b(R)\ar@<3ex>[l]_(0.45){i^!}\ar@<-3ex>[l]_(0.45){i^*}\ar[r]^{j^*}&\Dcal^b(eRe)\ar@<3ex>_{j_*}[l]\ar@<-3ex>_{j_!}[l]},
\end{equation}
where
\begin{eqnarray}\label{eq:six-functors}\begin{array}{ll}
i^*=-\lten_R R/ReR, & j_!=-\lten_{eRe} eR,\\
i_*=-\lten_{R/ReR}R/ReR, & j^*=-\lten_R Re,\\
i^!=\RHom_R(R/ReR,-),& j_*=\RHom_{eRe}(Re,-).
\end{array}
\end{eqnarray}
Fix $Y=R/ReR$ and let $X\in\Dcal^b(eRe)$ be a silting object (e.g. $X=eRe$). Then theorem~\ref{glue silting} yields a completion $Z=i_*Y\oplus K_X$ of the partial silting object $i_*Y=S_1$ into a silting object. However, as we will show, $Z$ is never tilting and, thus, $S_1$ cannot be completed to a tilting object.

Let $T$ be a basic tilting object of $\Dcal^b(R)$ which contains $S_1$ as a direct summand. The functor $j^*$ factors through the canonical projection functor $\pi:\Dcal^b(R)\rightarrow\Dcal^b(R)/\thick(S_1)$, as follows
\[\xymatrix@R=1pc{\Dcal^b(R)\ar[rr]^{j^*}\ar[rd]_(0.4){\pi} && \Dcal^b(eRe)\\
&\Dcal^b(R)/\thick(S_1).\ar[ru]_{\simeq}}\]
As $\thick(S_1)$ is a silting subcategory of $\Dcal^b(R)$, by~\cite[theorem 2.37]{AiharaIyama12}, the object $X=j^*(T)$ is silting in $\Dcal^b(eRe)$ and $T$ is equivalent to $S_1\oplus K_X$. Observe that $eRe$ is the path algebra of the Kronecker quiver
\[\xymatrix{2\ar@<.7ex>[rr]^{\gamma}\ar@<-.7ex>[rr]_{\beta} && 3}.\]
We draw the component of the AR quiver of $\Dcal^b(eRe)$ containing the preprojective modules
\[\xymatrix@R=0.5pc{\\ \cdots\\}\xymatrix{ &I_3'[-1]\ar@<.3ex>[rd]\ar@<-.3ex>[rd] && P_3'\ar@<.3ex>[rd]\ar@<-.3ex>[rd]&&P_{3,1}'&\\
I_2'[-1]\ar@<.3ex>[ru]\ar@<-.3ex>[ru]&&P_2'\ar@<.3ex>[ru]\ar@<-.3ex>[ru] && P_{2,1}'\ar@<.3ex>[ru]\ar@<-.3ex>[ru]}\xymatrix@R=0.7pc{\\ \cdots\\}\]
where $P_2'$ and $P_3'$ are respectively the indecomposable projective $eRe$-module at the vertex $2$ and $3$. Let $\tau=\tau_{eRe}$ denote the Auslander--Reiten translation of $\Dcal^b(eRe)$. Then according to~\cite[lemma 3.1]{SoutoTrepode11}, there exists $p$ and $q$ such that $\tau^pP_2'[q]$ is a direct summand of $K_X$. We claim that 
$$H^n(i^*j_*(\tau^p P_2'[q]))=\begin{cases} \mathbb{K} & \text{ if } n=-q-1,-q+1,\\ 0 & \text{ otherwise.}\end{cases}$$ As a consequence of corollary~\ref{c:left-nece}, $S_1\oplus K_X$ cannot be a tilting object, a contradiction.

The claim follows from a direct computation, which we show only for $P_{2,1}'=\tau^{-1}P_2'$ (it is analogous for others). 
For $n\in\mathbb{Z}$, we have 
\begin{eqnarray*}
H^n(i^*j_*(P_{2,1}'))\hspace{-7pt}&=&\hspace{-7pt}H^{n+1}(i^!j_!(P_{2,1}'))=H^{n+1}(\RHom_R(R/ReR,j_!(P_{2,1}')))\\
\hspace{-7pt}&=&\hspace{-7pt}H^{n+1}(\RHom_R(S_1,j_!(P_{2,1}')))=\Hom_{\Dcal^b(R)}(S_1,j_!(P_{2,1}')[n+1]).\end{eqnarray*}
We take a minimal projective resolution of $P_{2,1}'$
\[\xymatrix{P_2'\ar[r]^(0.35){{\gamma\choose\beta}}&\underline{P_3'\oplus P_3'}},\]
where the underlined term is in degree $0$.
Applying $j_!$ we obtain the following object in $\Dcal^b(R)$:
\[\xymatrix{P_2\ar[r]^(0.35){{\gamma\choose\beta}}&\underline{P_3\oplus P_3}}.\] 
The above differential is injective and its cokernel has the following minimal injective resolution
\[\xymatrix{\underline{I_2\oplus I_2\oplus I_1}\ar[r] & I_3\oplus I_3\oplus I_3\ar[r] & I_1\ar[r]& I_2\ar[r] & I_3}.\]
Therefore $j_!(P_{2,1}')$ is isomorphic to this complex in $\Dcal^b(R)$ and it follows that 
$$\Hom_R(S_1,j_!(P_{2,1}')[n+1])=\begin{cases} \mathbb{K} & \text{ if } n=-1,1,\\ 0 & \text{ otherwise.}\end{cases}$$

\end{example}

\bigskip

In using proposition \ref{glue tilting} we do not seem to be dealing with computations in $\Dcal$. Still, these occur in the construction of $K_X$. This can be avoided by introducing the assumption that $X$ is tilting.

\begin{theorem}\label{conditions tilting}
Let $\Rcal$ be a recollement of $\Dcal=\Dcal^b(R)$ of the form (\ref{recollement}) with  $\Xcal=\Dcal^b(C)$ and $\Ycal=\Dcal^b(B)$. Let $X$ and $Y$ be tilting objects of $\Xcal$ and $\Ycal$, respectively. Then $Z=i_*Y\oplus K_X$ is tilting in $\Dcal$ if and only if the following conditions are satisfied
\begin{enumerate}
\item[(a)] $\Hom_{\Ycal}(Y,i^*j_*X[k])=0$ for all $k<-1$;
\item[(b)] $\Hom_{\Ycal}(i^*j_*X,Y[k])=0$ for all $k<0$;
\item[(c)] $\Hom_{\Ycal}(i^*j_*X,i^*j_*X[k])=0$ for all $k<-1$.
\end{enumerate}
\end{theorem}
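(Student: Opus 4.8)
The strategy is to start from Proposition~\ref{glue tilting}, which already gives a characterization of when $Z$ is tilting in terms of four conditions, and rewrite each of them under the extra hypothesis that $X$ is tilting. Since $X$ is tilting, the co-t-structure and the t-structure associated with $X$ in $\Xcal$ coincide with the standard ones transported along the derived equivalence $\Dcal^b(C)\simeq\Dcal^b(\End_{\Xcal}(X))$ (remark~\ref{tilting}); in particular the truncation $\beta_{\geq 1}i^!j_!X$ and its complementary piece $\beta_{\leq 0}i^!j_!X$ become \emph{stupid} truncations in $\Dcal^b(\End(X))$, and $X$ has no negative self-extensions. The first thing I would do is record the dictionary of remark~\ref{two functors}, namely $i^*j_*=i^!j_![1]$, so that conditions phrased via $i^!j_!X$ in Proposition~\ref{glue tilting} can be rephrased via $i^*j_*X$, which is the functor appearing in (a)--(c).

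Next I would handle the four conditions of Proposition~\ref{glue tilting} one at a time. Condition~(1) there (``$Y$ is tilting'') is now part of the hypotheses, so it disappears. Condition~(2), $\Hom_\Ycal(Y,i^!j_!X[k])=0$ for $k<0$, becomes, after the shift $i^!j_!X=i^*j_*X[-1]$, exactly $\Hom_\Ycal(Y,i^*j_*X[k])=0$ for $k<-1$, i.e.\ condition~(a). Condition~(3), $\Hom_\Ycal(i^*j_*X,Y[k])=0$ for $k<0$, is already condition~(b) verbatim. The real work is condition~(4): $\Hom_\Xcal(X,j^+K_X[k])=0$ for $k<0$. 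Here I would use the defining triangle of $K_X$ in the form~(\ref{triang 2}),
\[
i_*\beta_{\leq 0}i^!j_!X\longrightarrow K_X\longrightarrow j_*X\longrightarrow (i_*\beta_{\leq 0}i^!j_!X)[1],
\]
apply the functor $j^+$ (right adjoint to $j_*$ from theorem~\ref{reflect}) to it, and use $j^+j_*\simeq\operatorname{id}_\Xcal$ together with $j^+i_*=0$ (this last vanishing coming from $i^!\circ j_*=0$ in the reflected recollement $\Rcal_L$, where $i_*$ plays the role of the ``$i_+$'' functor into the kernel and $j^+$ is adjacent to $j_*$). Wait --- one must be careful: $j^+i_*$ need not literally vanish, so instead I would apply $\Hom_\Xcal(X,-[k])=\Hom_\Dcal(j_*X,-[k])$ to the triangle directly in $\Dcal$, getting a long exact sequence relating $\Hom_\Dcal(j_*X,K_X[k])$ to $\Hom_\Dcal(j_*X,j_*X[k])$ and $\Hom_\Dcal(j_*X,i_*\beta_{\leq 0}i^!j_!X[k+1])$. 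The middle term is $\Hom_\Xcal(X,X[k])$, which vanishes for $k<0$ because $X$ is tilting; the outer term is, by adjunction $(j_!,j^*)$ applied after moving $i_*$, expressible as $\Hom_\Ycal(i^*j_*X,\beta_{\leq 0}i^!j_!X[k+1])$.

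Finally I would unpack that last Hom-group. Since $i^!j_!X=i^*j_*X[-1]$ and $\beta_{\leq 0}$ is a stupid truncation of $i^*j_*X$ once we work inside $\Dcal^b(\End(Y))$ via the tilting equivalence for $Y$, the term $\beta_{\leq 0}i^!j_!X$ is, up to the shift, the ``stupid truncation in degrees $\leq$ something'' of $i^*j_*X$; combining it with the already-known vanishing from conditions (a) and (b) (which control $H^k(i^*j_*X)$ outside a two-step window relative to $Y$), one checks that $\Hom_\Ycal(i^*j_*X,\beta_{\leq 0}i^!j_!X[k+1])=0$ for $k<0$ is equivalent to $\Hom_\Ycal(i^*j_*X,i^*j_*X[k])=0$ for $k<-1$, which is condition~(c). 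Assembling: conditions (1)--(4) of Proposition~\ref{glue tilting} are, under the tilting hypotheses on $X$ and $Y$, equivalent to (a), (b), (c). I expect the main obstacle to be the bookkeeping in this last step --- correctly tracking how the stupid-truncation piece $\beta_{\leq 0}i^!j_!X$ interacts with $i^*j_*X$ under the various shifts and adjunctions, and verifying that conditions (a) and (b) are exactly what is needed so that the cross-terms collapse and only the self-Hom condition (c) survives.
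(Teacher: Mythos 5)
Your proposal follows essentially the same route as the paper: reduce to Proposition~\ref{glue tilting}, translate its conditions (2) and (3) into (a) and (b) via $i^*j_*=i^!j_![1]$, and for condition (4) apply $\Hom_\Dcal(j_*X,-)$ to triangle (\ref{triang 2}) (using that $X$ is tilting to kill the $\Hom(j_*X,j_*X[k])$ terms), pass across the adjunction to $\Hom_\Ycal(i^*j_*X,\beta_{\leq 0}i^!j_!X[k])$, and then use the $\beta_{\geq 1}/\beta_{\leq 0}$ triangle together with (b) (since $\beta_{\geq 1}i^!j_!X\in\Ycal_{\geq 1}$ is generated by negative shifts of $Y$) to replace $\beta_{\leq 0}i^!j_!X$ by $i^!j_!X$ and obtain (c). The only quibble is the stray appeal to the adjunction $(j_!,j^*)$ where the relevant one is $(i^*,i_*)$; your self-correction about $j^+i_*$ not vanishing and the final bookkeeping otherwise match the paper's argument.
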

\begin{proof}
Condition $(a)$ is obtained from condition (2) of proposition \ref{glue tilting} and from remark \ref{two functors}, while condition (b) corresponds exactly to condition (3) of proposition \ref{glue tilting}. We will show that if $X$ is tilting and (a) and (b) are satisfied then condition (4) of proposition \ref{glue tilting} is equivalent to (c). Indeed, if $X$ is tilting, $\Hom_\Dcal(j_*X,j_*X[k])=0$ for all $k\neq 0$ and thus, applying the functor $\Hom_\Dcal(j_*X,\_)$ to the $k$-th shift of triangle (\ref{triang 2}), we get $\Hom_{\Dcal}(j_*X,K_X[k])\cong\Hom_\Dcal(j_*X,(i_*\beta_{\leq 0}i^!j_!X)[k])$ for all $k\neq 0,1$. Since $(j_*,j^+)$ is an adjoint pair, this shows that condition (4) of proposition \ref{glue tilting} is equivalent to 
\begin{equation}\nonumber
\Hom_\Dcal(j_*X,(i_*\beta_{\leq 0}i^!j_!X)[k])=\Hom_{\Dcal}(i^*j_*X,(\beta_{\leq 0}i^!j_!X)[k])=0, \ \forall k<0.
\end{equation} 
Applying the functor $\Hom_{\Ycal}(i^*j_*X,\_)$ to the triangle
\begin{equation}\nonumber
(\beta_{\geq 1}i^!j_!X)[k]\longrightarrow i^!j_!X[k]\longrightarrow (\beta_{\leq 0}i^!j_!X)[k]\longrightarrow (\beta_{\geq 1}i^!j_!X)[k+1]
\end{equation}
we get that $\Hom_{\Ycal}(i^*j_*X, \beta_{\leq 0}i^!j_!X[k])\cong\Hom_{\Ycal}(i^*j_*X, i^!j_!X[k])$, for all $k<0$. Indeed, this follows from (b) 
after recalling that $\beta_{\geq 1}i^!j_!X$ lies in $\Ycal_{\geq 1}$, which by construction (see \ref{general bijection}) is suitably generated by $(Y[n])_{n<0}$ 
- showing that $\Hom_{\Ycal}(i^*j_*X,(\beta_{\geq 1}i^!j_!X)[k+1])=0$ for all $k<0$. Using again remark \ref{two functors}, we obtain condition (c), thus finishing the proof.
\end{proof}

Before showing applications of this theorem, we discuss the behaviour of the  Serre functor with respect to a t-structure corresponding to a tilting object. Recall from \cite[proposition 3.1.10]{BeilinsonBernsteinDeligne82} that for any bounded t-structure $(\Dcal^{\leq 0},\Dcal^{\geq 0})$ in $\Dcal^b(R)$ with heart $\Acal$, there is a triangle functor
$real: \Dcal^b(\Acal)\rightarrow \Dcal^b(R)$,
t-exact for the standard t-structure $(\Dcal^{\leq 0}_{st},\Dcal^{\geq 0}_{st})$ in $\Dcal^b(\Acal)$ and $(\Dcal^{\leq 0},\Dcal^{\geq 0})$ in $\Dcal^b(R)$, i.e., it is both right t-exact ($real(\Dcal^{\leq 0}_{st})\subseteq \Dcal^{\leq 0}$) and left t-exact ($real(\Dcal^{\geq 0}_{st})\subseteq\Dcal^{\geq 0}$).

\begin{lemma}\label{Serre functor}
Let $(\Dcal^{\leq 0},\Dcal^{\geq 0})$ be a bounded t-structure in $\Dcal^b(R)$ with associated basic silting object $T$. Let $S_R$ be the Serre functor of $\Dcal^b(R)$. Then the following are equivalent:
\begin{enumerate}
\item $T$ is tilting; 
\item $S_R$ is right t-exact;
\item $S_R(T)$ lies in the heart.
\end{enumerate}
\end{lemma}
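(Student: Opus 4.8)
The plan is to prove the chain of implications $(1)\Rightarrow(2)\Rightarrow(3)\Rightarrow(1)$, using the $\mathrm{real}$ functor $\mathrm{real}:\Dcal^b(\Acal)\to\Dcal^b(R)$ attached to the heart $\Acal$ of the given $t$-structure. The key observation is that, by Rickard's theorem (remark \ref{tilting}), when $T$ is tilting the functor $\mathrm{real}$ is an equivalence and it identifies $(\Dcal^{\leq 0},\Dcal^{\geq 0})$ with the standard $t$-structure of $\Dcal^b(\Acal)\simeq\Dcal^b(\mathrm{End}(T))$; so all three conditions can be transported to the standard $t$-structure, where they are essentially tautological.

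\textbf{$(1)\Rightarrow(2)$.} Assume $T$ is tilting. Via the equivalence $\mathrm{real}$ we may identify $\Dcal=\Dcal^b(R)$ with $\Dcal^b(A)$, $A=\mathrm{End}(T)$, carrying $(\Dcal^{\leq 0},\Dcal^{\geq 0})$ to the standard $t$-structure. The Serre functor of $\Dcal^b(A)$ is $S_A=-\lten_A DA$ (derived Nakayama functor). Since $A$ has finite global dimension, $DA$ has a bounded projective resolution, i.e.\ $DA\in\Kcal^b(\mathrm{proj}\text{-}A^{op})$ concentrated in non-positive degrees when viewed appropriately; concretely, $-\lten_A DA$ sends the standard aisle $\Dcal^{\leq 0}_{st}$ into itself because $DA$, being a module (sitting in degree $0$) with finite projective dimension, satisfies $M\lten_A DA\in\Dcal^{\leq 0}_{st}$ for every $M\in\Dcal^{\leq 0}_{st}$ — one reduces to $M$ a module and then to $M=A$, where $A\lten_A DA=DA$ lies in degree $0$. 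Hence $S_R$ is right $t$-exact. (Alternatively one can argue directly in $\Dcal^b(R)$: right $t$-exactness of $S_R$ is equivalent, by adjunction and the definition of the aisle via $T$, to $\Hom(T,S_R(T)[k])=0$ for $k>0$, i.e.\ $\Hom(S_R(T)[k],T)^*=0$, i.e.\ $\Hom(S_R(T),T[-k])=0$ for $k>0$, which is $\Hom(T,T[-k])=0$ after using that $S_R$ is an equivalence — this is precisely the tilting condition $\Hom(T,T[i])=0$ for $i<0$.)

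\textbf{$(2)\Rightarrow(3)$.} If $S_R$ is right $t$-exact then $S_R(\Dcal^{\leq 0})\subseteq\Dcal^{\leq 0}$. Since $T\in\Dcal^{\leq 0}\cap\Dcal^{\geq 0}$ (a silting object always lies in the co-heart, and here the heart), we get $S_R(T)\in\Dcal^{\leq 0}$. For the other half, $S_R(T)\in\Dcal^{\geq 0}$: using that $T$ is Ext-projective in the aisle (the defining property of the silting object attached to the $t$-structure), for any $Z\in\Dcal^{\leq -1}$ we have $\Hom(S_R(T),Z)\cong\Hom(Z,T)^*=0$ because $\Hom(Z,T)\subseteq\Hom(Z,T)$ vanishes — more carefully, $\Dcal^{\geq 0}=(\Dcal^{\leq -1})^\perp$ is detected by $\Hom((\Dcal^{\leq -1}),S_R(T))$, and $\Hom(W,S_R(T))\cong\Hom(T,W)^*$ for $W\in\Dcal^{\leq -1}$; since $T\in\Dcal^{\geq 0}$ and $W\in\Dcal^{\leq -1}$, $\Hom(T,W)=0$. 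Thus $S_R(T)\in\Dcal^{\leq 0}\cap\Dcal^{\geq 0}$, the heart.

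\textbf{$(3)\Rightarrow(1)$.} Suppose $S_R(T)$ lies in the heart $\Acal$. Then for every $i\neq 0$, $S_R(T)[i]\notin\Acal$, but more usefully: by Serre duality $\Hom(T,T[i])\cong\Hom(T[i],S_R(T))^*=\Hom(T,S_R(T)[-i])^*$; since $T\in\Acal$ and $S_R(T)\in\Acal$, the group $\Hom_{\Dcal}(T,S_R(T)[-i])=\Hom_{\Acal\text{-shifted}}(\dots)$ vanishes for $-i<0$ i.e.\ $i>0$ (that is the silting condition, already known) and also for $-i>0$ i.e.\ $i<0$, because $\Hom_{\Dcal^b}(\Acal,\Acal[m])=\Ext^m_{\Acal}(-,-)=0$ for $m<0$ in any heart. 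Hence $\Hom(T,T[i])=0$ for all $i<0$ as well, so $T$ is tilting.

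\textbf{Main obstacle.} The delicate point is the equivalence $(1)\Leftrightarrow(2)$, specifically making precise why finite global dimension of $R$ forces the Serre functor to behave well, and handling the direction $(2)\Rightarrow$ right-exactness cleanly without circularity. The cleanest route, which I would adopt, is to use Serre duality $\Hom_\Dcal(U,V)\cong\Hom_\Dcal(V,S_R U)^*$ throughout and translate each $t$-exactness statement into a $\Hom$-vanishing statement between $T$ and its shifts, where everything collapses to the tilting condition $\Hom(T,T[i])=0$ for $i<0$; the only external input needed is that $T$ lies in the heart and is Ext-projective in the aisle, both recorded in the excerpt. Care must be taken that $S_R$ is an auto-equivalence (so it reflects vanishing), which holds since $R$ has finite global dimension.
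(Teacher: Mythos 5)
Your overall route --- $(1)\Rightarrow(2)$ via the realisation functor and the derived Nakayama functor, $(2)\Rightarrow(3)$ and $(3)\Rightarrow(1)$ via Serre duality together with the explicit description of the t-structure attached to $T$ --- is exactly the paper's. Two of your justifications, however, are off and need repair. You assert several times that $T$ lies in the heart $\Acal$ of $(\Dcal^{\leq 0},\Dcal^{\geq 0})$. A silting object lies in the \emph{co-heart} of its associated co-t-structure; it lies in the heart of its associated t-structure precisely when it is tilting, so this is statement (1) itself and cannot be assumed. In $(2)\Rightarrow(3)$ the slip is harmless, because there you only use $T\in\Dcal^{\leq 0}$, which is true ($T$ is Ext-projective in the aisle); but in $(3)\Rightarrow(1)$ it makes the argument circular as written. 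Relatedly, in $(2)\Rightarrow(3)$ your final justification ``$\Hom(T,W)=0$ since $T\in\Dcal^{\geq 0}$ and $W\in\Dcal^{\leq -1}$'' invokes the orthogonality $\Hom(\Dcal^{\leq -1},\Dcal^{\geq 0})=0$ in the wrong direction; the correct and available reason is the one you name first, namely that $W\in\Dcal^{\leq -1}=\Dcal^{\leq 0}[1]$ and $\Hom(T,\Dcal^{\leq 0}[1])=0$ by the definition of the aisle associated with $T$.

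The second problem is in $(3)\Rightarrow(1)$: for $i<0$ the vanishing you need is $\Hom(T,S_R(T)[-i])=0$ with $-i>0$, i.e.\ a \emph{positive} shift of an object of the heart, whereas the reason you cite, $\Ext^m_{\Acal}=0$ for $m<0$, concerns negative shifts (and would in any case require $T\in\Acal$, which is the conclusion). Positive self-extensions of objects of a heart need not vanish, so this step would fail as justified. The correct one-line argument, which is the paper's, uses only the description of the t-structure associated with the silting object $T$: by definition $\Dcal^{\leq 0}=\{Z:\Hom(T,Z[m])=0,\ \forall m>0\}$ and $\Dcal^{\geq 0}=\{Z:\Hom(T,Z[m])=0,\ \forall m<0\}$, so $S_R(T)\in\Acal$ forces $\Hom(T,S_R(T)[m])=0$ for all $m\neq 0$, and Serre duality then gives $\Hom(T,T[i])=0$ for all $i\neq 0$. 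With these repairs your proof coincides with the paper's.
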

\begin{proof}
$(1)\Rightarrow (2)$: Suppose $T$ is tilting. Let $\Gamma =$End$_{\Dcal^b(R)}(T)$ and let $(\Dcal^{\leq 0}_{st},\Dcal^{\geq 0}_{st})$ be the standard t-structure in $\Dcal^b(\Gamma)$. Then, by  \cite[theorem 6.6]{Alonso-Jeremias-Souto03}, the realisation functor is a t-exact equivalence. Therefore, $real \circ S_\Gamma = S_R\circ real$ (since Serre functors are unique) and $real(\Dcal_{st}^{\leq 0}) = \Dcal^{\leq 0}$. Note also that $S_\Gamma$ preserves the standard aisle in $\Dcal^b(\Gamma)$ since it can be realised as a derived tensor product with a bimodule. This is enough to observe that $S_R$ is right t-exact, since
\begin{equation}\nonumber
S_R(\Dcal^{\leq 0})=S_R(real(\Dcal_{st}^{\leq 0})) = real(S_\Gamma(\Dcal_{st}^{\leq 0})) \subseteq real(\Dcal^{\leq 0}_{st}) =\Dcal^{\leq 0}.
\end{equation}

$(2)\Rightarrow (3)$: For any $K\in\Dcal^{\leq -1}$, $\Hom_{\Dcal^b(R)}(K,S_RT)=\Hom_{\Dcal^b(R)}(T,K)=0$, by definition of the t-structure associated to $T$ (see subsection 2.3). This shows that $S_R T\in \Dcal^{\geq 0}$. If $S_R$ is right t-exact, then $S_R(T)\in\Dcal^{\leq 0}$ and, thus, $S_R(T)$ lies in the heart.

$(3)\Rightarrow (1)$: Suppose $S_R(T)$ lies in the heart. By the definition of the t-structure associated to $T$, it is clear that $\Hom(T,T[k])=\Hom(T,S_R(T)[-k])=0$ for all $k\neq 0$. Therefore $T$ is tilting.
\end{proof}

\begin{proposition}\label{sufficient condition}
Let $\Rcal$ be a recollement of $\Dcal=\Dcal^b(R)$ of the form (\ref{recollement}) with $\Xcal=\Dcal^b(C)$ and $\Ycal=\Dcal^b(B)$. Let $X$ and $Y$ be tilting objects of $\Xcal$ and $\Ycal$, respectively. If $i^*j_*X$ is an element of $\Dcal^b(B)$ such that 
$\Hom_\Ycal(Y,i^*j_*X[k])$ is zero except for two consecutive values of $k\in\Zbb$, then the family of silting objects $(Z_n=i_*Y\oplus K_{X[n]})_{n\in\Zbb}$ contains at least one tilting object.
\end{proposition}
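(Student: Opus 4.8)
The plan is to apply Theorem~\ref{conditions tilting} to the shifted tilting object $X[n]$ (noting that $X[n]$ is still tilting in $\Xcal$ for every $n$, since shifting does not affect the vanishing of $\Hom_{\Xcal}(X[n],X[n][k])$), and to show that at least one value of $n$ makes conditions (a), (b) and (c) simultaneously hold. First I would record how the three relevant $\Hom$-groups transform under the shift $X\rightsquigarrow X[n]$. Since $i^*j_*$ is a triangle functor, $i^*j_*(X[n])\cong (i^*j_*X)[n]$, so writing $W:=i^*j_*X\in\Dcal^b(B)$ we have, for all $k$,
\begin{equation}\nonumber
\Hom_\Ycal(Y,i^*j_*(X[n])[k])=\Hom_\Ycal(Y,W[n+k]),
\end{equation}
\begin{equation}\nonumber
\Hom_\Ycal(i^*j_*(X[n]),Y[k])=\Hom_\Ycal(W,Y[k-n]),
\end{equation}
\begin{equation}\nonumber
\Hom_\Ycal(i^*j_*(X[n]),i^*j_*(X[n])[k])=\Hom_\Ycal(W,W[k]).
\end{equation}
Thus condition (c) does not depend on $n$ at all, while conditions (a) and (b) are governed by the position of the ``support'' of the functors $\Hom_\Ycal(Y,W[-])$ and $\Hom_\Ycal(W,Y[-])$ relative to a window determined by $n$.

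Next I would exploit the hypothesis that $\Hom_\Ycal(Y,W[k])$ vanishes except for two consecutive values of $k$, say $k=d$ and $k=d+1$. Condition (a) for $X[n]$ reads $\Hom_\Ycal(Y,W[n+k])=0$ for all $k<-1$, i.e. $\Hom_\Ycal(Y,W[m])=0$ for all $m<n-1$; by the hypothesis this holds as soon as $n-1\le d$, i.e. $n\le d+1$. So (a) holds for all sufficiently small $n$. For condition (b), $\Hom_\Ycal(W,Y[k-n])=0$ for all $k<0$ means $\Hom_\Ycal(W,Y[m])=0$ for all $m<-n$; since $Y$ is tilting, $\Hom_\Ycal(W,Y[m])$ vanishes for $m$ sufficiently negative (as $\Dcal^b(B)$ is bounded and $W$ and $Y$ are honest bounded complexes — here I would invoke boundedness of the derived category of the finite-dimensional algebra $B$ of finite global dimension, so that $\Hom_\Ycal(W,Y[m])=0$ for $|m|\gg 0$), hence (b) holds for all sufficiently small $n$ as well. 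The main point is then that the ranges of $n$ for which (a) and (b) respectively hold are both downward-closed and non-empty, so their intersection is a non-empty set of sufficiently small integers; any $n$ in that intersection, together with the $n$-independent condition (c) — which I must still check, but which is exactly the surviving hypothesis "$\Hom_\Ycal(W,W[k])=0$ for $k<-1$"...

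Here is the subtle point I would need to handle carefully: Theorem~\ref{conditions tilting} presupposes condition (c), $\Hom_\Ycal(W,W[k])=0$ for all $k<-1$, but the statement of the proposition only assumes a two-consecutive-values hypothesis on $\Hom_\Ycal(Y,W[-])$, \emph{not} directly on $\Hom_\Ycal(W,W[-])$. The hard part will be deriving condition (c) from the hypothesis. I expect this to follow because $W=i^*j_*X$ and $Y$ together control each other through the recollement and the tilting hypotheses on $X$ and $Y$: concretely, $\Dcal^b(B)$ is generated by $Y$, so a two-consecutive-values concentration of $\Hom_\Ycal(Y,W[-])$ forces $W$ itself to be concentrated in (the real of) two consecutive degrees of the $t$-structure on $\Dcal^b(B)$ attached to the tilting object $Y$ — equivalently $W$ is (up to the equivalence $\Dcal^b(B)\simeq\Dcal^b(\operatorname{End} Y)$) a two-term complex — and a two-term complex $W$ automatically satisfies $\Hom(W,W[k])=0$ for $k<-1$. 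I would spell out this reduction via the realisation functor and Lemma~\ref{Serre functor}/Remark~\ref{tilting}, translating to the standard $t$-structure on $\Dcal^b(\operatorname{End} Y)$ where the claim about two-term complexes is immediate. Once (c) is in hand and an admissible $n$ is chosen as above, Theorem~\ref{conditions tilting} applied to $X[n]$ gives that $Z_n=i_*Y\oplus K_{X[n]}$ is tilting, completing the proof.
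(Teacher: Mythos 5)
There is a genuine gap, and it sits at the heart of the proposition. Your translation of condition (a) is correct: for $X[n]$ it reads $\Hom_\Ycal(Y,W[m])=0$ for all $m<n-1$, which holds precisely for $n\le d+1$, a downward-closed range. But your translation of condition (b) has the inequality backwards. Condition (b) for $X[n]$ is $\Hom_\Ycal(W,Y[m])=0$ for all $m<-n$; as $n$ decreases, $-n$ increases and the set $\{m<-n\}$ grows, so the condition gets \emph{harder}, not easier. Boundedness of the complexes gives vanishing of $\Hom_\Ycal(W,Y[m])$ for $m\ll 0$, hence (b) holds for all sufficiently \emph{large} $n$ — an upward-closed range. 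The intersection of a downward-closed set with an upward-closed set can perfectly well be empty, so your concluding step ("both ranges are downward-closed, hence their intersection is non-empty") fails, and with it the proof. Indeed, the entire content of the proposition is that these two ranges do meet, and this is exactly where the "two consecutive values" hypothesis is used: if $\Hom_\Ycal(Y,W[-])$ were supported on three consecutive degrees the two thresholds would miss each other by one.

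The missing ingredient is the paper's use of Lemma~\ref{Serre functor}. Writing the hypothesis as $W=i^*j_*X\in\Ycal^{\ge a}\cap\Ycal^{\le a+1}$ for the t-structure attached to $Y$, condition (a) for $X[n]$ becomes $n\le a+1$, while condition (b) becomes $S_\Ycal W\in\Ycal^{\le n}$ (by Serre duality). Since $Y$ is tilting, $S_\Ycal$ is right t-exact, so $S_\Ycal W\in\Ycal^{\le a+1}$ and (b) is guaranteed for $n\ge a+1$. The two windows meet exactly at $n=a+1$, which is the value you must take. Your treatment of condition (c) — that concentration of $W$ in two consecutive cohomological degrees forces $\Hom_\Ycal(W,W[k])=0$ for $k<-1$ — is correct and matches the paper, but without the Serre-functor step the argument does not close.
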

\begin{proof}
Let $(\Ycal^{\leq 0},\Ycal^{\geq 0})$ be the t-structure associated with $Y$. By definition of the t-structure associated to $Y$, $\Hom_\Ycal(Y,i^*j_*X[k])$ is nonzero exactly for two consecutive values of $k\in\Zbb$ if and only if there is $a$ in $\Zbb$ such that  $i^*j_*X\in\Ycal^{\geq a}\cap\Ycal^{\leq a+1}$.  Since $Y$ is tilting, by lemma \ref{Serre functor} we have that $S_\Ycal i^*j_*X$ lies in $\Ycal^{\leq a+1}$. In theorem \ref{conditions tilting}, condition (a) can be reformulated as $i^*j_*X\in\Ycal^{\geq -1}$ and condition (b) as $S_\Ycal i^*j_*X\in\Ycal^{\leq 0}$. Note that, once condition (c) is satisfied for some $X[n]$ it is satisfied for all its shifts. By theorem \ref{conditions tilting}, $Z_n$ is tilting if and only if 
\begin{itemize}
\item[(a)] $i^*j_*X[n]\in\Ycal^{\geq -1}$, which means that $a-n\geq -1$ or, equivalently, $n\leq a+1$;
\item[(b)] $S_\Ycal i^*j_*X[n]\in\Ycal^{\leq 0}$ which is guaranteed if $a+1-n\leq 0$ or, equivalently, $n\geq a+1$ 
\item[(c)] $\Hom_\Ycal(i^*j_*X, i^*j_*X[k])=0$ for all $k<-1$.
\end{itemize}
Therefore, take $n=a+1$ and we only need to check condition (c). This condition is, however, automatic from our assumption that $i^*j_*X$ has cohomologies with respect to $(\Ycal^{\leq 0},\Ycal^{\geq 0})$ concentrated in exactly two consecutive degrees, thus finishing the proof.
\end{proof}

\begin{remark}
In the family $(Z_n)_{n\in\mathbb{Z}}$ there may be more than one tilting object. Let $a$ be the maximal integer such that $i^*j_*X\in\Ycal^{\geq a}$ and $b$ be the minimal integer such that $S_\Ycal i^*j_*X\in\Ycal^{\leq b}$. Then $Z_n$ is tilting if and only if $b\leq n \leq a+1$.
\end{remark}

From the proof above, we see that, for $X$ and $Y$ tilting, whenever $i^*j_*X$ lies in $\Ycal^{\leq 0}
\cap \Ycal^{\geq -1}$ (i.e., the cohomologies of $i^*j_*X$ with respect to $Y$ lie in degrees $-1$ and $0$), the induced silting
$Z=i_*Y\oplus K_X$ is tilting. Since $i^*j_*=i^!j_![1]$, the
condition is equivalent to that $i^!j_!X\in\Ycal^{\leq 1}\cap
\Ycal^{\geq 0}$ which, in its turn, is equivalent to $\Hom_\Ycal(Y,i^!j_!X[k])\cong\Hom_\Dcal(i_*Y,j_!X[k])=0$ whenever $k\neq 0,1$. Under this assumption, we can apply the construction from
\cite{AngeleriKoenigLiu11a} (theorem 2.5) to the pair $T_1=j_!X$ and $T_2=i_*Y$.  A similar construction was studied in \cite{Ladkani11} in the setting of triangular matrix rings.

We briefly recall the construction in \cite{AngeleriKoenigLiu11a}. Since $(j_!,j^*)$ is an adjoint pair in the recollement (of the form (\ref{recollement})),
 $\Hom_\Dcal(j_!X,i_*Y[k]) \cong \Hom_\Xcal(X,j^*i_*Y[k])=0$ for all $k\in\Z$. Let $m$
be the dimension of $\Hom_\Dcal(i_*Y,j_!X[1])$ and take a basis $\alpha_1,\ldots,\alpha_m:i_*Y\ra j_!X[1]$.
Consider the universal maps
\begin{eqnarray*}
\alpha=(\alpha_1,\ldots,\alpha_m)^{tr}&:&i_*Y^{\oplus m}\ra j_!X[1],\\
\beta=(\alpha_1,\ldots,\alpha_m)&:&i_*Y\ra j_!X[1]^{\oplus m}.\end{eqnarray*} The map $\alpha$ is left-universal, i.e., the induced map $$\Hom(i_*Y,\alpha):\Hom_\Dcal(i_*Y,i_*Y^{\oplus m}) \ra 
\Hom_\Dcal(i_*Y,j_!X[1])$$
is surjective. Similarly, the map $\beta$ is right-universal, i.e., the induced map $$\Hom(\beta,j_!X[1]):\Hom_\Dcal(j_!X[1]^{\oplus m},j_!X[1])\ra 
\Hom_\Dcal(i_*Y,j_!X[1])$$
is surjective. Consider the triangles determined by $\alpha$ and $\beta$:
\begin{eqnarray*}T_\alpha &:&  j_!X \ra C_1 \ra i_*Y^{\oplus m} \xrightarrow{\alpha} j_!X[1],\\
T_\beta &:&  j_!X^{\oplus m} \ra C_2 \ra i_*Y \xrightarrow{\beta} j_!X[1]^{\oplus m}.\end{eqnarray*}
Theorem 2.5 in \cite{AngeleriKoenigLiu11a} asserts that $i_*Y\oplus C_1$ and $j_!X\oplus C_2$ are tilting object in $\Dcal$.

The next proposition proves that the tilting object $i_*Y\oplus C_1$ is precisely the silting (indeed tilting) glued from $X\in\Xcal$ and $Y\in\Ycal$ 
with respect to the recollement $\Rcal$, and $j_!X\oplus C_2$ is the silting (indeed tilting) glued from $X\in\Xcal$ and $Y\in\Ycal$
with respect to the upper reflection $\Rcal_U$. As a consequence, under the conditions of theorem \ref{conditions tilting}, we are not only
able to glue tilting objects in the original recollement $\Rcal$, but also in the upper reflection $\Rcal_U$.

\begin{proposition}
Let $\Rcal$ be a recollement of $\Dcal=\Dcal^b(R)$ of the form (\ref{recollement}), with $\Xcal=\Dcal^b(C)$ and $\Ycal=\Dcal^b(B)$. Let $X$ and $Y$ be tilting objects of $\Xcal$ and $\Ycal$, respectively, such that  
$\Hom_\Ycal(Y,i^*j_*X[k])=0$ for all $k\neq 0,-1$. Then, up to multiplicity,
the tilting object $i_*Y\oplus C_1$ 
coincides with $Z=i_*Y\oplus K_X$ of theorem \ref{glue silting}, and the tilting object
$j_!X\oplus C_2$ coincides with $Z_U=j_!X\oplus K_Y$ of corollary \ref{glue for t-str}.
\end{proposition}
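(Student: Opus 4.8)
The plan is to identify the two triangles defining $C_1$ and $K_X$ (respectively $C_2$ and $K_Y$) and show they are isomorphic up to summands in the relevant (co-)heart. I would start by analysing the triangle $T_\alpha: j_!X\to C_1\to i_*Y^{\oplus m}\xrightarrow{\alpha}j_!X[1]$. The key point is that, under the hypothesis $\Hom_\Ycal(Y,i^*j_*X[k])=0$ for $k\neq 0,-1$, equivalently $\Hom_\Dcal(i_*Y,j_!X[k])=0$ for $k\neq 0,1$, the object $i_*\beta_{\geq 1}i^!j_!X$ appearing in Theorem \ref{glue silting} can be taken to be $i_*Y^{\oplus m}[-1]$: indeed $i^!j_!X$ lies in $\Ycal^{\leq 1}\cap\Ycal^{\geq 0}$, so its truncation $\beta_{\geq 1}i^!j_!X$ lies in $\Ycal_{\geq 1}\cap\Ycal_{\leq 1}=\mathrm{add}(Y[-1])$, and counting morphisms via the adjunction $\Hom_\Ycal(Y[-1],i^!j_!X)\cong\Hom_\Dcal(i_*Y[-1],j_!X)\cong\Hom_\Dcal(i_*Y,j_!X[1])$ shows the multiplicity is exactly $m$. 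Thus the defining triangle of $K_X$ reads $i_*Y^{\oplus m}[-1]\to j_!X\to K_X\to i_*Y^{\oplus m}$, which is precisely the rotation of $T_\alpha$ once we check the connecting morphism agrees.

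The second step is to verify that the morphism $i_*Y^{\oplus m}[-1]\to j_!X$ used in Theorem \ref{glue silting} (which, by Remark \ref{r:the-morphism}, corresponds to the chosen truncation morphism $\beta_{\geq 1}i^!j_!X\to i^!j_!X$) is, up to isomorphism of triangles, the universal map $\alpha$ shifted by $[-1]$. Both are determined by the property that the induced map $\Hom_\Dcal(i_*Y,i_*Y^{\oplus m})\to\Hom_\Dcal(i_*Y,j_!X[1])$ is surjective (the truncation morphism is a left $\mathrm{add}(i_*Y[-1])$-approximation because $\beta_{\geq 1}$ is the truncation onto $\Ycal_{\geq 1}$ and $i^!j_!X[-1]$ has no negative $\Hom$ from $Y$ into higher pieces). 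Since left approximations with the same source are unique up to a (split) automorphism and adding summands with zero map, the cones $C_1$ and $K_X$ coincide up to a direct summand lying in $\mathrm{add}(i_*Y)$, i.e. up to multiplicity in the co-heart — exactly the claimed conclusion. Then $i_*Y\oplus C_1$ and $i_*Y\oplus K_X$ agree up to multiplicity.

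For the statement about $Z_U$, I would run the mirror argument through the upper reflected recollement $\Rcal_U$ of Theorem \ref{reflect}, in which the roles of the two sides and of $j_!,i_*$ are swapped into $i_\#, j_!$; Corollary \ref{glue for t-str} already packages $K_Y$ via the triangle $j_!\alpha_{\geq 1}j^*i_\#Y\to i_\#Y\to K_Y\to(j_!\alpha_{\geq 1}j^*i_\#Y)[1]$. The same dimension count — now using $\Hom_\Dcal(i_*Y,j_!X[1])$ and the adjunctions attached to $\Rcal_U$ — identifies $j_!\alpha_{\geq 1}j^*i_\#Y$ with $j_!X^{\oplus m}$ and the connecting morphism with the right-universal map $\beta$, yielding $T_\beta$ and hence $j_!X\oplus C_2 = j_!X\oplus K_Y$ up to multiplicity.

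The main obstacle I anticipate is the bookkeeping in the identification $i_*\beta_{\geq 1}i^!j_!X\cong i_*Y^{\oplus m}[-1]$ with the \emph{correct} morphism: one must make sure that the non-functorial truncation triangle can be \emph{chosen} so that its connecting map literally equals $\alpha$ (not merely an abstractly isomorphic triangle), and that "up to multiplicity" is the sharp comparison — this requires a short argument that a left $\mathrm{add}(i_*Y)[-1]$-approximation is unique up to isomorphism after stripping off split summands in $\mathrm{add}(i_*Y)[-1]$, together with the observation that passing to cones turns those into split summands in $\mathrm{add}(i_*Y)$. Everything else is formal manipulation of the recollement adjunctions and of Theorem \ref{reflect}.
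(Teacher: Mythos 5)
Your treatment of the first statement ($i_*Y\oplus C_1$ versus $i_*Y\oplus K_X$) is essentially correct, and it takes a genuinely different route from the paper: the paper checks that $C_1$ satisfies the three conditions of proposition \ref{characterising glued silting}(3) by applying $j^*$, $i^*$ and $i^!$ to $T_\alpha$ and using the left-universality of $\alpha$, whereas you match the two defining triangles directly. Your route rests on two observations which do hold: under the hypothesis $i^!j_!X\in\Ycal^{\leq 1}=\Ycal_{\leq 1}$, so $\beta_{\geq 1}i^!j_!X$ may be chosen in $\Ycal_{\geq 1}\cap\Ycal_{\leq 1}=\mathrm{add}(Y[-1])$; and every map $Y[-1]\to i^!j_!X$ factors through $\beta_{\geq 1}i^!j_!X\to i^!j_!X$ because $\Hom_\Ycal(Y[-1],\beta_{\leq 0}i^!j_!X)=\Hom_\Ycal(\Ycal_{\geq 0},\Ycal_{\leq -1})=0$. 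Two small points: the universality property you display is that of a \emph{right} $\mathrm{add}(i_*Y[-1])$-approximation of $j_!X$, not a left one; and the claim that the multiplicity is exactly $m$ is neither justified (a dimension count of a Hom-space does not count indecomposable summands unless $\End(Y)=\Kbb$) nor needed, since comparing two right approximations already gives agreement of the cones up to summands in $\mathrm{add}(i_*Y)$, which is all the statement requires.

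The second half, as sketched, does not work. The triangle $T_\beta\colon j_!X^{\oplus m}\to C_2\to i_*Y\xrightarrow{\beta} j_!X[1]^{\oplus m}$ and the defining triangle $j_!\alpha_{\geq 1}j^*i_\#Y\to i_\#Y\to K_Y\to(j_!\alpha_{\geq 1}j^*i_\#Y)[1]$ of corollary \ref{glue for t-str} have different shapes: in the first, the object to be identified with $K_Y$, namely $C_2$, is the \emph{middle} term and the approximating object lies in $\mathrm{add}(j_!X)$; in the second, $K_Y$ is the \emph{third} term and $j_!\alpha_{\geq 1}j^*i_\#Y$ lies (at best) in $\mathrm{add}(j_!X[-1])$, because $\alpha_{\geq 1}$ truncates into $\Xcal_{\geq 1}$ and $\Xcal_{\geq 1}\cap\Xcal_{\leq 1}=\mathrm{add}(X[-1])$. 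So the proposed identification of $j_!\alpha_{\geq 1}j^*i_\#Y$ with $j_!X^{\oplus m}$ is off by a shift, and matching the connecting morphisms would force $K_Y\cong i_*Y$ (equivalently $C_2\cong i_\#Y$), which is false. If you want to keep the triangle-matching strategy, you must use the \emph{other} triangle produced by the octahedron in the proof of theorem \ref{glue silting}, transported to $\Rcal_U$ (where $i_*$ plays the role of $j_*$), namely
\begin{equation}\nonumber
j_!\alpha_{\leq 0}j^*i_\#Y\longrightarrow K_Y\longrightarrow i_*Y\longrightarrow (j_!\alpha_{\leq 0}j^*i_\#Y)[1],
\end{equation}
show that $\alpha_{\leq 0}j^*i_\#Y$ may be chosen in $\mathrm{add}(X)$ (this needs $j^*i_\#Y\in\Xcal_{\geq 0}$, which follows from the hypothesis via Serre duality), and then compare $\beta$, which is a left $\mathrm{add}(j_!X[1])$-approximation of $i_*Y$, with the map $i_*Y\to (j_!\alpha_{\leq 0}j^*i_\#Y)[1]$. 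Alternatively, follow the paper's pattern and verify directly that $C_2$ satisfies the conditions (i)--(iii) of proposition \ref{characterising glued silting} for the recollement $\Rcal_U$, using the right-universality of $\beta$ for the condition involving $j^\#$.
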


\begin{proof} By hypothesis, $i^*j_*X$ lies in $\Ycal^{\leq 0}\cap\Ycal^{\geq -1}$, for the t-structure $(\Ycal^{\leq 0},\Ycal^{\geq 0})$ associated with $Y$.
We will prove that $C_1$ satisfies the conditions (i) -- (iii) for $K_X$ in proposition \ref{characterising glued silting} (3).
It follows then $i_*Y\oplus C_1$ lies in the glued co-heart of $\Dcal$ and it generates $i_*Y\oplus 
j_!X$, thus generating the whole of $\Dcal$.
Therefore, up to multiplicities, it coincides with the glued silting $Z=i_*Y\oplus K_X$. 

For condition (i), we apply $j^*$ to the defining triangle
$T_\alpha$ of $C_1$ and obtain that $j^*C_1 = j^*j_!X = X$, since $j^*i_*=0$. For condition (ii), we apply $i^*$ to $T_\alpha$
and obtain that $i^*C_1 = i^*i_*Y^{\oplus m} = Y^{\oplus m} \in \Ycal_{\geq 0}$, since $i^*j_!=0$. For condition (iii) we apply
$i^!$ to $T_\alpha$ and obtain a triangle
$$i^!j_!X \ra i^!C_1 \ra i^!i_*Y^{\oplus m} \xrightarrow{i^!\alpha} i^!j_!X[1].$$
By applying $\Hom_\Ycal(Y,-)$ to this triangle, we obtain a long exact sequence
$$... \ra \Hom_\Ycal(Y,i^!i_*Y^{\oplus m}) \xrightarrow{(i^!\alpha)_*} \Hom_\Ycal(Y,i^!j_!X[1]) \ra \Hom_\Ycal(Y,i^!C_1[1]) \ra
\Hom_\Ycal(Y,i^!i_*Y[1]^{\oplus m}) \ra ... $$
For $k>0$, we have $\Hom_\Ycal(Y,i^!i_*Y[k]^{\oplus m})\simeq \Hom_\Ycal(Y,Y[k]^{\oplus m}) = 0$ (since, by assumption, $Y$ is tilting) and
for $k>1$, $\Hom_\Ycal(Y,i^!j_!X[k])=0$ (by our assumption on $i^*j_*X=i^!j_!X[1]$). 
Hence, $\Hom_\Ycal(Y,i^!C_1[k])=0$, for all $k>1$. By adjunction there is a commutative diagram
\[\xymatrix{(i^!\alpha)_*: \hspace{-15pt}& \Hom_\Ycal(Y,i^!i_*Y^{\oplus m}) \ar[r]& \Hom_\Ycal(Y,i^!j_!X[1])\\
\alpha_*:\hspace{-27pt}&\Hom_\Dcal(i_*Y,i_*Y^{\oplus m})\ar[u]^\simeq \ar[r]& \Hom_\Dcal(i_*Y,j_!X[1]).\ar[u]^\simeq}\]  Since, by construction, $\alpha$ 
is left-universal, i.e., $\alpha_*$ is surjective, it follows that $(i^!\alpha)_*$ is surjective. Hence, we get $\Hom_\Ycal(Y,i^!C_1[1])=0$ and, thus,  
$i^!C_1$ lies in $\Ycal^{\leq 0} = \Ycal_{\leq 0}$. The proof for statement about $j_!X\oplus C_2$ is analogous.
\end{proof}

\begin{section}{The hereditary case}
In this section we assume that $R$ has finite global dimension and we apply theorem \ref{conditions tilting} when $B$ is a hereditary algebra. The conditions for the glued silting to be tilting are then easier to handle.
\begin{proposition}
Let $\Rcal$ be a recollement of  $\Dcal=\Dcal^b(R)$ of the form (\ref{recollement}), with  $\Xcal=\Dcal^b(C)$, $\Ycal=\Dcal^b(B)$ and $B$ hereditary.
Let $X$ be a tilting object in $\Xcal$ and $Y=B$. Then $Z=i_*Y\oplus K_X$ is tilting in $\Dcal$ if and only if there are finitely generated $B$-modules $X'_{-1}$, $X'_{0}$ and $X'_{1}$ such that
\begin{itemize}
\item $i^*j_*X$ is isomorphic to $X'_{-1}[1]\oplus X'_0\oplus X'_1[-1]$;
\item $X'_1$ is either zero or not projective; 
\item $\Hom_B(X'_{1},X'_{-1})=0$.
\end{itemize}
\end{proposition}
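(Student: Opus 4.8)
The plan is to reduce the statement to Theorem \ref{conditions tilting} and then translate the three conditions appearing there into module theory, using that $B$ is hereditary. Since $Y=B$ is a tilting object of $\Ycal=\Dcal^b(B)$ and $X$ is tilting by hypothesis, Theorem \ref{conditions tilting} applies: $Z=i_*Y\oplus K_X$ is tilting if and only if conditions (a), (b) and (c) of that theorem hold. As $B$ is hereditary, $\Dcal^b(B)$ is a hereditary triangulated category, so every object is formal; in particular $i^*j_*X\cong\bigoplus_{n\in\Zbb}X'_n[-n]$ where $X'_n:=H^n(i^*j_*X)$ is a finitely generated $B$-module, and for $B$-modules $U,V$ one has $\Hom_\Ycal(U,V[k])=\Ext^k_B(U,V)$, which vanishes for $k<0$ and for $k\geq 2$. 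I would fix this notation and then treat (a), (b) and (c) in turn.

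For (a): since $B$ is projective, $\Hom_\Ycal(Y,i^*j_*X[k])=H^k(i^*j_*X)=X'_k$, so (a) says exactly that $X'_n=0$ for all $n<-1$. For (b): expanding along the decomposition, $\Hom_\Ycal(i^*j_*X,Y[k])=\bigoplus_n\Ext^{k+n}_B(X'_n,B)$, and since this is a direct sum, (b) is equivalent to $\Ext^m_B(X'_n,B)=0$ for all $n$ and all $m<n$. Using that $\Ext^k_B(-,-)=0$ for $k\geq 2$, for $n\geq 2$ this forces $\Hom_B(X'_n,B)=\Ext^1_B(X'_n,B)=0$, i.e. $\RHom_B(X'_n,B)=0$; since $B$ has finite global dimension, $\RHom_B(-,B)$ is a duality on $\Dcal^b(B)$, hence $X'_n=0$ for $n\geq 2$. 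Together with (a) this yields the first bullet, $i^*j_*X\cong X'_{-1}[1]\oplus X'_0\oplus X'_1[-1]$, and the only remaining content of (b) is the case $n=1$, $m=0$, namely $\Hom_B(X'_1,B)=0$. Over the hereditary algebra $B$ this is equivalent to $X'_1$ having no nonzero projective direct summand --- if $f\colon X'_1\to B$ is nonzero then its image is a submodule of $B$, hence projective, and therefore splits off $X'_1$; the converse is clear --- which is the second bullet.

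Finally, for (c), I would use the decomposition just obtained: $\Hom_\Ycal(i^*j_*X,i^*j_*X[k])=\bigoplus_{n,m\in\{-1,0,1\}}\Ext^{k+n-m}_B(X'_n,X'_m)$. For $k<-1$ the exponent $k+n-m$ can lie in $\{0,1\}$ only when $k+n-m=0$ (the case $k+n-m=1$ would force $m-n\leq -3$, impossible for $m,n\in\{-1,0,1\}$), and then $k\leq -2$ forces $m-n=-2$, hence $(n,m)=(1,-1)$ and $k=-2$, leaving exactly $\Hom_B(X'_1,X'_{-1})$. So (c) is equivalent to $\Hom_B(X'_1,X'_{-1})=0$, the third bullet. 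Running the same computations backwards gives the converse: if the three bullets hold then (a), (b), (c) hold, so $Z$ is tilting. There is no serious obstacle here beyond the bookkeeping; the two points that deserve care are the small lemma identifying $\Hom_B(X'_1,B)=0$ with the absence of projective summands over a hereditary algebra, and keeping precise track of which $\Ext$-degrees fall inside the vanishing ranges prescribed by (a), (b) and (c).
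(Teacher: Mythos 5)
Your argument is correct and follows the same overall strategy as the paper: reduce to the three conditions of Theorem \ref{conditions tilting}, use formality of objects in $\Dcal^b(B)$ for $B$ hereditary to write $i^*j_*X\cong\bigoplus_n X'_n[-n]$, and translate each condition degree by degree. Your treatments of (a) and (c) coincide with the paper's. The one genuine difference is condition (b): the paper applies Serre duality, rewrites the condition as $\Hom_{\Dcal^b(B)}(B,S_B(i^*j_*X)[k])=0$ for $k>0$, and then uses $S_B=[1]\circ\tau_B$ together with the behaviour of the Auslander--Reiten translate on projectives to conclude $X'_n=0$ for $n>1$ and that $X'_1$ has no projective summands. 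You instead compute $\Hom_\Ycal(i^*j_*X,Y[k])=\bigoplus_n\Ext^{k+n}_B(X'_n,B)$ directly and kill the terms with $n\geq 2$ via the fact that $\RHom_B(-,B)$ reflects zero objects (finite global dimension), then identify the residual $n=1$ condition $\Hom_B(X'_1,B)=0$ with the absence of projective summands. Your route is more elementary in that it avoids the Serre functor and AR-theory entirely, at the small cost of invoking the duality $\RHom_B(-,B)$; both are standard and the bookkeeping agrees. One remark: what you actually derive is that $X'_1$ has \emph{no nonzero projective direct summand}, which is the correct translation of $\Hom_B(X'_1,B)=0$ and is what the paper's own computation also yields; the phrase ``not projective'' in the statement should be read in that stronger sense.
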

\begin{proof}
We analyse the conditions of theorem \ref{conditions tilting} when $B$ is  hereditary and $Y=B$.
Since $B$ is hereditary, we may assume that $i^*j_*X=\bigoplus_{n\in\mathbb{Z}} X'_n[-n]$, with $X'_n$ a finitely generated right $B$-module for all $n$ in $\Zbb$.
In this setting, $\Hom_{\Ycal}(Y,i^*j_*X[k])=0, \text{ for all } k<-1$, is equivalent to
$$\Hom_{\Dcal^b(B)}(B, \bigoplus_{n\in\mathbb{Z}} X'_n[-n+k])=0 \text{ for all } k<-1$$
which happens if and only if $X'_{k}=0$, for all $k<-1$.
On the other hand, the condition
\[\Hom_{\Ycal}(i^*j_*X,Y[k])=\Hom_{\Dcal^b(B)}(i^*j_*X,B[k])=0, \text{ for all } k<0\] 
can be reformulated, using the Serre functor $S_B$ in $\Dcal^b(B)$, by
\[\Hom_{\Dcal^b(B)}(B,S_B(i^*j_*X)[k])=0, \text{ for all } k>0.\]
It is well known that $S_B=[1]\circ\tau_B$, where $\tau_B$ is the Auslander-Reiten translation in $\Dcal^b(B)$ and, thus, the statement above is equivalent to
\[ \Hom_{\Dcal^b(B)}(B,\bigoplus_{n\in\mathbb{Z}}\tau_B(X_n')[-n+k+1])=0,\ \forall k>0,\]
i.e., $X'_{k}=0, \text{ for all } k>1, \text{ and } X'_1 \text{ is not a projective } B \text{-module.}$ So, (a) and (b) of theorem~\ref{conditions tilting} hold if and only if
$i^*j_*X=X'_{-1}[1]\oplus X'_0\oplus X'_1[-1], \text{ with } X'_1 \text{ not projective as a $B$-module.}$
Assuming this, we can easily unfold the last condition of theorem \ref{conditions tilting} as follows
\begin{eqnarray*}
\Hom_{\Ycal}(i^*j_*X,i^*j_*X[k])=0, \forall k<-1 &\Leftrightarrow & \Hom_{\Dcal^b(B)}(X'_1,X'_{-1})=0, \end{eqnarray*}
thus finishing the proof. 
\end{proof}

As a nice corollary, we obtain a necessary and sufficient condition for the case $\Ycal=\Dcal^b(\Kbb)$.

\begin{corollary}\label{c:left=k}
Let $\Rcal$ be a recollement of $\Dcal=\Dcal^b(R)$ of the form (\ref{recollement}), with $\Xcal=\Dcal^b(C)$ and $\Ycal=\Dcal^b(\Kbb)$. Let $X$ be a tilting object in $\Xcal$ and $Y=\Kbb$. 
Then $Z=i_*Y\oplus K_X$ is tilting if and only if there are finite dimensional $\Kbb$-vector spaces $X'_{-1},X'_{0}$
 such that $i^*j_*X\cong X'_{-1}[1]\oplus X'_0$.
\end{corollary}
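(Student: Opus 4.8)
The plan is to deduce this directly from the preceding proposition, exploiting that $\Kbb$, being a field, is a hereditary algebra, and that the standing hypothesis of this section — that $R$ has finite global dimension — is in force. First I would apply that proposition with $B=\Kbb$ and $Y=B=\Kbb$. It asserts that $Z=i_*Y\oplus K_X$ is tilting in $\Dcal$ if and only if there exist finitely generated $\Kbb$-modules $X'_{-1}, X'_0, X'_1$ — equivalently, finite dimensional $\Kbb$-vector spaces — with $i^*j_*X\cong X'_{-1}[1]\oplus X'_0\oplus X'_1[-1]$, subject to the two extra requirements that $X'_1$ is zero or non-projective and that $\Hom_{\Kbb}(X'_1,X'_{-1})=0$.

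The key observation is then that over the field $\Kbb$ every module is free, hence projective, so the clause ``$X'_1$ is zero or not projective'' forces $X'_1=0$. Once $X'_1=0$, the condition $\Hom_{\Kbb}(X'_1,X'_{-1})=0$ holds trivially, and the decomposition condition reduces to $i^*j_*X\cong X'_{-1}[1]\oplus X'_0$ with $X'_{-1},X'_0$ finite dimensional $\Kbb$-vector spaces. Conversely, any such decomposition plainly satisfies all three conditions of the proposition with the choice $X'_1=0$. This yields the claimed equivalence.

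There is essentially no obstacle here; the only point worth flagging is that the non-projectivity clause, which is the genuinely restrictive part of the general hereditary statement, simply disappears when $\Ycal=\Dcal^b(\Kbb)$, because a nonzero vector space can never fail to be projective. As an alternative route one could argue straight from theorem \ref{conditions tilting}: for $Y=\Kbb$ condition (c) is automatic (self-$\Hom$'s of a complex of vector spaces vanish in negative degrees), while conditions (a) and (b) force the cohomology of $i^*j_*X$ with respect to $Y$ to be concentrated in degrees $-1$ and $0$, exactly as in corollary \ref{c:left-nece}; but routing through the preceding proposition is the shortest path.
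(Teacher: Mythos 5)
Your proposal is correct and is exactly the argument the paper intends (the corollary is stated without proof, as an immediate specialization of the preceding proposition): with $B=\Kbb$ every nonzero module is projective, so the clause forcing $X'_1$ to be zero or non-projective yields $X'_1=0$, and the remaining conditions collapse to the stated decomposition. Nothing further is needed.
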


\begin{example}\label{ex:hereditary} Let $R$ be the path algebra over $\Kbb$ of the quiver $\xymatrix{1\ar[r] & 2,}$ of type $A_2$. Let $e=e_1$ be the trivial path at the vertex $1$. Consider the following standard recollement
\begin{equation}\nonumber
\xymatrix@C=3pc{\Dcal^b(R/ReR)\ar[r]^(0.55){i_*}&\Dcal^b(R)\ar@<3ex>[l]_(0.45){i^!}\ar@<-3ex>[l]_(0.45){i^*}\ar[r]^{j^*}&\Dcal^b(eRe)\ar@<3ex>_{j_*}[l]\ar@<-3ex>_{j_!}[l]},
\end{equation}
where the six functors are defined as in (\ref{eq:six-functors}).
Let $D$ denote the functor $\Hom_\Kbb(-,\Kbb)$. It is easy to observe that:
\begin{enumerate}
\item as algebras both $R/ReR$ and $eRe$ are isomorphic to $\Kbb$;
\item as an $R$-module $eR$ is the simple module $S_1$, supported at the vertex $1$, and has an injective resolution $(D(Re)\rightarrow D(R(1-e)))$;
\item as an $R$-module $R/ReR$ is the simple module $S_2$ supported at the vertex $2$.
\end{enumerate}
Fix $Y=R/ReR$ and $X=eRe$. Then, in $\Dcal^b(R)$, we have
\begin{eqnarray*}i^!j_!X &=& i^!(eR)\hspace{7pt}=\hspace{7pt}\RHom_R(R/ReR,eR)\\
&\cong&(\Hom_R(R/ReR,D(Re))\rightarrow\Hom_R(R/ReR,D(R(1-e))))\\
&\cong&R/ReR[-1].\end{eqnarray*}
By corollary~\ref{c:left=k}, there are exactly two tilting objects in $\{Z_n=i_*Y\oplus K_{X[n]}:n\in\mathbb{Z}\}$. We construct this family using theorem~\ref{glue silting}. For $n\in\mathbb{Z}$, we can choose the map $\beta_{\geq 1}(i^!j_!X[n])\rightarrow i^!j_!X[n]$ to be
\[\begin{cases} R/ReR[n-1]\stackrel{id}{\rightarrow} R/ReR[n-1] & \text{ if } n\leq 0,\\
0\rightarrow R/ReR[n-1] & \text{ if } n>0.\end{cases}\]
By adjunction we have a bijection
\[\Hom(i_*\beta_{\geq 1}(i^!j_!X[n]),j_!X[n])\stackrel{\simeq}{\longrightarrow}\Hom(\beta_{\geq 1}(i^!j_!X[n]),i^!j_!X[n]).\]
If $n\geq 0$, this adjunction morphism is
\[\Hom_{\Dcal^b(R)}(S_2[n-1],S_1[n])\stackrel{\simeq}{\longrightarrow}\Hom_{\Dcal^b(R)}(R/ReR[n-1],R/ReR[n-1]).\]
Both these vector spaces are $1$-dimensional over $\mathbb{K}$. Let $f$ be the preimage of $id_{R/ReR[n-1]}$. Then the cone of $f[-n]$ is $(1-e)R$, the projective cover $P_2$ of $S_2$. If $n<0$, the above adjunction morphism is
\[\Hom_{\Dcal^b(R)}(0,S_1[n])\stackrel{\simeq}{\longrightarrow}\Hom_{\Dcal^b(R)}(0,R/ReR[n-1]).\] Thus a morphism $i_*\beta_{\geq 1}(i^!j_!X[n])\rightarrow j_!X[n]$ as in theorem~\ref{glue silting} is (see remark~\ref{r:the-morphism})
\[\begin{cases} S_2[n-1]\stackrel{f}{\rightarrow} S_1[n] & \text{ if } n\leq 0,\\
0\rightarrow S_1[n] & \text{ if } n>0.\end{cases}\]
Hence, we have
\[K_{X[n]}=\begin{cases} P_2[n] & \text{ if } n\leq 0,\\
S_1[n] & \text{ if } n>0,\end{cases}\ \ \ \ \text{ and }\ \ \ \ \ Z_{n}=\begin{cases} S_2\oplus P_2[n] & \text{ if } n\leq 0,\\
S_2\oplus S_1[n] & \text{ if } n>0.\end{cases}\]
Among these silting objects, $Z_0=S_2\oplus P_2$ and $Z_1=S_2\oplus S_1[1]$ are tilting objects.
\end{example}

\end{section}

\bigskip

\section{HRS-tilts and recollements}

In this section we show that HRS-tilts of t-structures with respect to torsion theories (\cite{HappelReitenSmaloe96})
are \textit{compatible} with the glueing of t-structures via
recollements. The main results of this section are theorem \ref{t:glueing-vs-tilt} and proposition \ref{p:glueing-torsion-pair}.
Our notation is fixed as follows.
\begin{itemize}
\item $\Dcal$ is a triangulated category
admitting a recollement $\Rcal$ of the form (\ref{recollement});
\item 
$(\Ycal^{\leq 0},\Ycal^{\geq 0})$ and $(\Xcal^{\leq 0},\Xcal^{\geq
0})$ are bounded t-structures in $\Ycal$ and $\Xcal$ respectively; ,
\item $(\Dcal^{\leq 0},\Dcal^{\geq 0})$ is glued from
$(\Ycal^{\leq 0},\Ycal^{\geq 0})$ and $(\Xcal^{\leq 0},\Xcal^{\geq
0})$ (theorem \ref{glue}). 
\end{itemize}
In fact (see \cite{BeilinsonBernsteinDeligne82}, \cite{LiuVitoria12}), $(\Dcal^{\leq
0},\Dcal^{\geq 0})$ is glued with respect to $\Rcal$ if and only if  $j_!j^*(\Dcal^{\leq 0})
\subseteq \Dcal^{\leq 0}$ (or, equivalently, if and only if $j_*j^*(\Dcal^{\geq 0})
\subseteq \Dcal^{\geq 0}$)
in which case the \textbf{restrictions} to $\Ycal$ and $\Xcal$ satisfy
$$\Ycal^{\leq 0}=i^*(\Dcal^{\leq 0}),\quad \Ycal^{\geq 0}=i^!(\Dcal^{\geq 0}),$$
$$\Xcal^{\leq 0}=j^*(\Dcal^{\leq 0}),\quad \Xcal^{\geq 0}=j^*(\Dcal^{\geq 0}).$$

Let $\Acal_\Dcal$, $\Acal_\Ycal$ and $\Acal_\Xcal$ be the hearts of
these t-structures. In \cite[section 1.4]{BeilinsonBernsteinDeligne82}, it is shown that there is a \textbf{recollement of abelian categories} at the level of hearts
\begin{equation}\label{recollement ab}
\begin{xymatrix}{\Acal_\mathcal{Y}\ar[r]^{^p i_*}& \Acal_\mathcal{D}\ar@<3ex>[l]_{^pi^!}\ar@<-3ex>[l]_{^pi^*}\ar[r]^{^pj^*}&\Acal_\mathcal{\mathcal{X}}\ar@<3ex>_{^pj_*}[l]\ar@<-3ex>_{^pj_!}[l]}.
\end{xymatrix}
\end{equation}
We describe these functors explicitly. Let $\epsilon_\Dcal:\Acal_\Dcal\hookrightarrow\Dcal$ denote the
full embedding (similarly $\epsilon_\Ycal: \Acal_\Ycal\hookrightarrow \Ycal$ and $\epsilon_\Xcal:\Acal_\Xcal\hookrightarrow \Xcal$) and
let $H^i_\Dcal:\Dcal \ra \Acal_\Dcal$, $i\in\Zbb$, denote the  cohomological functors with respect to the fixed t-structure in $\Dcal$ 
(similarly $H^i_\Ycal$ and $H^i_\Xcal$ for the t-structures in $\Ycal$ and $\Xcal$). Then the
functors in the recollement (\ref{recollement ab}) are given by
$${^pi^*} = H^0_\Ycal\circ i^* \circ \epsilon_\Dcal,\ {^pi^!} = H^0_\Ycal\circ i^! \circ \epsilon_\Dcal, \
{^pi_*} = H^0_\Dcal\circ i_* \circ \epsilon_\Ycal,$$
$$ {^pj_!} = H^0_\Dcal\circ j_! \circ \epsilon_\Xcal,\ {^pj_*} = H^0_\Dcal\circ j_* \circ \epsilon_\Xcal, \
{^pj^*} = H^0_\Xcal\circ j^* \circ \epsilon_\Dcal.$$ See~\cite{FranjouPirashvili04} for more on recollements of abelian categories.

\begin{remark}\label{exact}
Since for our fixed t-structures, $i_*$ and $j^*$ are t-exact (see, for example, \cite{LiuVitoria12} for details), we have that ${^pi_*}=i_*\circ\epsilon_\Ycal$ and ${^pj^*}=j^*\circ\epsilon_\Dcal$. As a consequence, these two functors are exact.
\end{remark}
Moreover, for any object
$A$ in $\Acal_\Dcal$ there are exact sequences
\begin{equation}\label{ex seq 1}
0\ra {^pi_*}H^{-1}_\Ycal {^pi^*}A\ra {^pj_!}{^pj^*}A \ra A\ra
{^pi_*}{^pi^*}A \ra 0\,,
\end{equation}
\begin{equation}\label{ex seq 2}
0\ra {^pi_*}{^pi^!}A \ra A \ra
{^pj_*}{^pj^*}A \ra {^pi_*}H^{1}_\Ycal {^pi^*}A \ra 0\,.
\end{equation}
Torsion pairs   in $\Acal_\Dcal$ satisfying certain conditions can be
\textbf{restricted} to torsion pairs in $\Acal_\Ycal$ and $\Acal_\Xcal$.

\begin{lemma}\label{restriting torsion pair rhs} Let $(\Tcal,\Fcal)$ be a torsion pair in
$\Acal_\Dcal$. Then
\begin{enumerate}
\item $({^pi^*}(\Tcal),{^pi^!}(\Fcal))$ is a torsion
pair in $\Acal_\Ycal$;
\item the following are equivalent: 
\begin{itemize}
\item[(i)] $({^pj^*}(\Tcal),{^pj^*}(\Fcal))$ is a torsion pair in $\Acal_\Xcal$,
\item[(ii)] ${^pj_!}{^pj^*}(\Tcal) \subseteq \Tcal$,
\item[(iii)] ${^pj_*}{^pj^*}(\Fcal) \subseteq \Fcal$.
\end{itemize}
\end{enumerate}
\end{lemma}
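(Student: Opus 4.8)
The plan is to reduce everything to two elementary features of the recollement of abelian categories~(\ref{recollement ab}): the functor ${}^pi_*$ is exact and fully faithful (remark~\ref{exact}), and so is ${}^pj^*$; moreover ${}^pj^*$ is essentially surjective, since it has the fully faithful left adjoint ${}^pj_!$, so that $U\cong{}^pj^*{}^pj_!U$ for every $U\in\Acal_\Xcal$. Beyond this I will use the exact sequences~(\ref{ex seq 1}) and~(\ref{ex seq 2}), the four adjunctions among the $\,{}^p$-functors, the t-exactness of $i_*$ and $j^*$, and the identities $\Tcal={}^\perp\Fcal$, $\Fcal=\Tcal^\perp$ holding in $\Acal_\Dcal$.

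For (1), I first check orthogonality: for $A\in\Tcal$, $B\in\Fcal$, the adjunction $({}^pi^*,{}^pi_*)$ gives $\Hom_{\Acal_\Ycal}({}^pi^*A,{}^pi^!B)\cong\Hom_{\Acal_\Dcal}(A,{}^pi_*{}^pi^!B)$, and~(\ref{ex seq 2}) exhibits ${}^pi_*{}^pi^!B$ as a subobject of $B$, hence in $\Fcal$; so the group vanishes since $A\in\Tcal={}^\perp\Fcal$. For the decomposition, I take $N\in\Acal_\Ycal$ and a torsion sequence $0\to T\to{}^pi_*N\to F\to 0$ in $\Acal_\Dcal$, with $T\in\Tcal$, $F\in\Fcal$. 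Applying the exact functor ${}^pj^*$ and using ${}^pj^*{}^pi_*N=0$ forces ${}^pj^*T={}^pj^*F=0$, hence $j^*T=j^*F=0$ in $\Xcal$ by t-exactness of $j^*$; then the recollement triangle $i_*i^!T\to T\to j_*j^*T\to i_*i^!T[1]$ collapses to $T\cong i_*i^!T$, and, $i_*$ being t-exact and faithful, $i^!T$ lies in $\Acal_\Ycal$, so $T={}^pi_*T_0$ with $T_0={}^pi^!T={}^pi^*T\in{}^pi^*(\Tcal)$; likewise $F={}^pi_*F_0$ with $F_0={}^pi^!F\in{}^pi^!(\Fcal)$. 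Since ${}^pi_*$ is exact and faithful it reflects short exact sequences, so $0\to T_0\to N\to F_0\to 0$ is exact in $\Acal_\Ycal$; this and the orthogonality above make $({}^pi^*(\Tcal),{}^pi^!(\Fcal))$ a torsion pair.

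For (2), the key observation is that the decomposition axiom for $({}^pj^*(\Tcal),{}^pj^*(\Fcal))$ is automatic: any $U\in\Acal_\Xcal$ is ${}^pj^*A$ for some $A\in\Acal_\Dcal$ (e.g. $A={}^pj_!U$), and applying the exact functor ${}^pj^*$ to a torsion sequence of $A$ yields $0\to{}^pj^*T\to U\to{}^pj^*F\to 0$ with outer terms in ${}^pj^*(\Tcal)$ and ${}^pj^*(\Fcal)$. Hence (i) is equivalent to the orthogonality $\Hom_{\Acal_\Xcal}({}^pj^*A,{}^pj^*B)=0$ for all $A\in\Tcal$, $B\in\Fcal$ (the remaining closure properties of a torsion pair then follow formally). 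By the adjunction $({}^pj^*,{}^pj_*)$ this group is $\Hom_{\Acal_\Dcal}(A,{}^pj_*{}^pj^*B)$, which vanishes for all $A\in\Tcal$ exactly when ${}^pj_*{}^pj^*B\in\Tcal^\perp=\Fcal$; so (i)$\Leftrightarrow$(iii). Dually, by $({}^pj_!,{}^pj^*)$ the group is $\Hom_{\Acal_\Dcal}({}^pj_!{}^pj^*A,B)$, which vanishes for all $B\in\Fcal$ exactly when ${}^pj_!{}^pj^*A\in{}^\perp\Fcal=\Tcal$; so (i)$\Leftrightarrow$(ii).

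The step requiring genuine care is the construction of the torsion sequence in~(1): one must pass from $\Acal_\Dcal$ down to $\Acal_\Ycal$, and this forces use of the recollement triangles of $\Dcal$ itself (not just the heart-level recollement) to see that the torsion sub- and quotient objects of ${}^pi_*N$ lie in the essential image of ${}^pi_*$, followed by the remark that an exact faithful functor reflects exactness. Part~(2) is essentially formal once one notices that exactness and essential surjectivity of ${}^pj^*$ make the torsion decomposition free, so that only the orthogonality condition carries content, and that condition unwinds by adjunction.
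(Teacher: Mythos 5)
Your proof is correct and follows the same overall architecture as the paper's: orthogonality in (1) via adjunction and the exact sequences (\ref{ex seq 1})--(\ref{ex seq 2}), the decomposition in (1) by showing the torsion subobject and torsion-free quotient of ${}^p i_* N$ lie in the essential image of ${}^p i_*$, and part (2) by observing that exactness of ${}^p j^*$ makes the decomposition axiom automatic so that only orthogonality carries content, which then unwinds through the adjunctions $({}^p j_!,{}^p j^*)$ and $({}^p j^*,{}^p j_*)$. The one substantive divergence is in the decomposition step of (1): the paper simply cites \cite[remark 1.4.17.1]{BeilinsonBernsteinDeligne82} to the effect that ${}^p i_*(\Acal_\Ycal)$ is a Serre subcategory of $\Acal_\Dcal$, whereas you reprove the needed instance of that fact by hand --- applying the exact ${}^p j^*$ to the torsion sequence of ${}^p i_* N$ to kill $j^* T$ and $j^* F$, collapsing the recollement triangle $i_*i^! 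T\to T\to j_*j^* T$ to conclude $T\cong i_*i^! T$, and then invoking the fact that an exact faithful functor reflects exactness. This makes your argument more self-contained (and correctly identifies that one must descend to the triangulated recollement for this step), at the cost of a few extra lines; your orthogonality argument in (1), using a single adjunction together with closure of $\Fcal$ under subobjects, is likewise a harmless streamlining of the paper's use of both exact sequences. Your explicit remark that essential surjectivity of ${}^p j^*$ (via ${}^p j^* {}^p j_!\cong\mathrm{id}$) is what makes the decomposition in (2) free is a point the paper leaves implicit.
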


\begin{proof} (1): 
We first check the orthogonality condition for $({^pi^*}(\Tcal),{^pi^!}(\Fcal))$.
We have that
$$\Hom_{\Acal_\Dcal}({^pi^*}(\Tcal),{^pi^!}(\Fcal)) = \Hom_{\Acal_\Dcal}({^pi_*}{^pi^*}(\Tcal), {^pi_*}{^pi^!}(\Fcal))$$
since $^pi_*$ is fully faithful. Consider the exact sequence (\ref{ex seq 1}) with $A$ in $\Tcal$ and apply to it 
the functor $\Hom_{\Acal_\Dcal}(-,B)$ for some $B$ in $\Fcal$. Since $\Hom_{\Acal_\Dcal}(A,B)=0$, we conclude that  
$\Hom({^pi_*}{^pi^*}(A),B)=0$, i.e., ${^pi_*}{^pi^*}(\Tcal)
\subseteq \Tcal$. Similarly we have ${^pi_*}{^pi^!}(\Fcal) \subseteq \Fcal$ and, thus, $\Hom_{\Acal_\Dcal}({^pi_*}{^pi^*}(\Tcal),{^pi_*}{^pi^!}(\Fcal))=0$, as wanted.

Secondly, we produce a suitable short exact sequence for any object in $\Acal_\Dcal$. By \cite[remark 1.4.17.1]{BeilinsonBernsteinDeligne82}, ${^pi_*}(\Acal_\Ycal)$ is a Serre subcategory of $\Acal_\Dcal$, i.e., for any
short exact sequence in $\Acal_\Dcal$,
$$0\ra K\ra M\ra L\ra 0,$$ $M$ lies in the ${^pi_*}(\Acal_\Ycal)$
if and only if so do $K$ and $L$.  Let $M$ lie in $\Acal_\Ycal$ and consider a short exact
sequence $$0\ra \widetilde{M_1}\ra {^pi_*}(M) \ra \widetilde{M_2} \ra
0$$ in $\Acal_\Dcal$ with $\widetilde{M_1}\in\Tcal$ and
$\widetilde{M_2}\in\Fcal$ (which exists since $(\Tcal,\Fcal)$ is a torsion pair in $\Dcal$). Since ${^pi_*}(\Acal_\Ycal)$ is a Serre
subcategory of $\Acal_\Dcal$, there exist $M_1$ and $M_2$ in $\Acal_\Ycal$
such that $\widetilde{M_i}={^pi_*}(M_i)$ for $i=1,2$. Now,
${^pi_*}$ is an exact full embedding (remark \ref{exact}) and, thus, we get a short exact sequence in
$\Acal_\Ycal$ $$0\ra M_1\ra M\ra M_2\ra 0,$$ 
with $M_1\cong{^pi^*}{^pi_*}(M_1)={^pi^*}(\widetilde{M_1})$ lying in
${^pi^*}(\Tcal)$ and $M_2\cong{^pi^!}{^pi_*}(M_2) =
{^pi^!}(\widetilde{M_2})$ lying in ${^pi^!}(\Fcal)$.

(2):  By remark \ref{exact}, the functor ${^pj^*}:\Acal_\Dcal\ra\Acal_\Xcal$ is exact. 
Hence, $({^pj^*}(\Tcal),{^pj^*}(\Fcal))$ is a torsion pair in
 $\Acal_\Xcal$ if and only these classes are orthogonal,
 i.e. $\Hom_{\Acal_\Xcal}({^pj^*}(\Tcal),{^pj^*}(\Fcal))=0$. Using the
 adjunctions $({^pj_!},{^pj^*})$ and $({^pj^*},{^pj_*})$, this holds if and only if
 $\Hom_{\Acal_\Dcal}({^pj_!}{^pj^*}(\Tcal),\Fcal)=0$ (equivalent to (ii)) and if and only
 if $\Hom_{\Acal_\Dcal}(\Tcal,{^pj_*}{^pj^*}(\Fcal))=0$ (equivalent to (iii)).
\end{proof}

When the conditions of part (2) of the lemma are fulfilled, we say that 
$(\Tcal,\Fcal)$ is \textbf{compatible} with the recollement (\ref{recollement ab}). We will see that this compatibility condition is precisely the requirement on a torsion pair so that the corresponding HRS-tilt of a glued t-structure is also obtained by glueing.

\begin{proposition} A torsion pair $(\Tcal,\Fcal)$ in $\Acal_\Dcal$ is compatible with
respect to (\ref{recollement ab}) if and only if the corresponding HRS-tilt of $(\Dcal^{\leq 0},\Dcal^{\geq 0})$ is a t-structure glued with respect to the recollement (\ref{recollement}).
\label{torsion pair iff HRS-tilt}
\end{proposition}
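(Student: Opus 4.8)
The plan is to reduce everything to the criterion recalled just before the statement: a t-structure in $\Dcal$ is glued with respect to $\Rcal$ exactly when $j_!j^*$ maps its aisle into itself. Applying this to the HRS-tilt $(\Dcal^{\leq 0}_{(\Tcal,\Fcal)},\Dcal^{\geq 0}_{(\Tcal,\Fcal)})$ (which is a t-structure by Theorem \ref{HRS tilting}), the proposition becomes the assertion that
\begin{equation}\nonumber
j_!j^*\bigl(\Dcal^{\leq 0}_{(\Tcal,\Fcal)}\bigr)\subseteq\Dcal^{\leq 0}_{(\Tcal,\Fcal)}\qquad\Longleftrightarrow\qquad {^pj_!}\,{^pj^*}(\Tcal)\subseteq\Tcal,
\end{equation}
since the right-hand condition is, by Lemma \ref{restriting torsion pair rhs}(2), precisely compatibility of $(\Tcal,\Fcal)$ with the recollement (\ref{recollement ab}).

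The technical core is the identity
\begin{equation}\nonumber
H^0_\Dcal(j_!j^*E)\;\cong\;{^pj_!}\,{^pj^*}\bigl(H^0_\Dcal(E)\bigr),\qquad E\in\Dcal^{\leq 0}.
\end{equation}
First I would record that, because the fixed glued t-structure satisfies $j_!j^*(\Dcal^{\leq 0})=j_!(\Xcal^{\leq 0})\subseteq\Dcal^{\leq 0}$, the functor $j_!$ is right t-exact, while $j^*$ is t-exact (Remark \ref{exact}). Then, for $E\in\Dcal^{\leq 0}$, I would set $G=j^*E\in\Xcal^{\leq 0}$ and apply $j_!$ to the truncation triangle $\tau^{\leq -1}_\Xcal G\to G\to H^0_\Xcal(G)\to(\tau^{\leq -1}_\Xcal G)[1]$. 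Right t-exactness of $j_!$ gives $j_!(\tau^{\leq -1}_\Xcal G)\in\Dcal^{\leq -1}$ (as $\tau^{\leq -1}_\Xcal G\in\Xcal^{\leq 0}[1]$), so the long exact cohomology sequence of the resulting triangle forces $H^0_\Dcal(j_!G)\cong H^0_\Dcal\bigl(j_!H^0_\Xcal(G)\bigr)={^pj_!}\bigl(H^0_\Xcal(G)\bigr)$. Finally t-exactness of $j^*$ gives $H^0_\Xcal(G)=H^0_\Xcal(j^*E)=j^*H^0_\Dcal(E)={^pj^*}\bigl(H^0_\Dcal(E)\bigr)$, the last step because $H^0_\Dcal(E)$ already lies in $\Acal_\Dcal$; this is the claimed identity. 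Since $E\in\Dcal^{\leq 0}_{(\Tcal,\Fcal)}\subseteq\Dcal^{\leq 0}$ forces $H^i_\Dcal(j_!j^*E)=0$ for all $i>0$ automatically, the identity shows that $j_!j^*E\in\Dcal^{\leq 0}_{(\Tcal,\Fcal)}$ if and only if ${^pj_!}\,{^pj^*}\bigl(H^0_\Dcal(E)\bigr)\in\Tcal$.

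With this in hand both implications are formal. For the forward direction, if $(\Tcal,\Fcal)$ is compatible then ${^pj_!}\,{^pj^*}(\Tcal)\subseteq\Tcal$ by Lemma \ref{restriting torsion pair rhs}; given $E\in\Dcal^{\leq 0}_{(\Tcal,\Fcal)}$ we have $H^0_\Dcal(E)\in\Tcal$ by definition of the HRS-tilt, so ${^pj_!}\,{^pj^*}(H^0_\Dcal(E))\in\Tcal$ and hence $j_!j^*E\in\Dcal^{\leq 0}_{(\Tcal,\Fcal)}$; thus the HRS-tilt is glued. Conversely, if the HRS-tilt is glued, then for any $T\in\Tcal$, viewed as an object of $\Acal_\Dcal\subseteq\Dcal^{\leq 0}_{(\Tcal,\Fcal)}$ with $H^0_\Dcal(T)=T$, we get $j_!j^*T\in\Dcal^{\leq 0}_{(\Tcal,\Fcal)}$, whence ${^pj_!}\,{^pj^*}(T)=H^0_\Dcal(j_!j^*T)\in\Tcal$; therefore ${^pj_!}\,{^pj^*}(\Tcal)\subseteq\Tcal$ and $(\Tcal,\Fcal)$ is compatible.

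The step I expect to be the main obstacle is the cohomology identity of the second paragraph: one must keep straight the three t-structures in play and the (co)exactness of $j^*$ and $j_!$ with respect to them, and distinguish carefully between the triangulated functors $j^*,j_!$ and their abelian truncations ${^pj^*},{^pj_!}$. Once that bookkeeping is done, and the ``glued $\Leftrightarrow$ $j_!j^*$ preserves the aisle'' criterion together with Lemma \ref{restriting torsion pair rhs}(2) are invoked, the proof is short.
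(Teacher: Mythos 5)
Your proposal is correct and follows essentially the same route as the paper: both reduce to the criterion that a t-structure is glued iff $j_!j^*$ preserves its aisle, use the truncation triangle of the original t-structure to isolate the degree-zero part (your truncation in $\Xcal$ after applying the t-exact $j^*$ is the same triangle as the paper's truncation in $\Dcal$ followed by $j_!j^*$), and then match the condition on $H^0$ with condition (ii) of Lemma \ref{restriting torsion pair rhs}. Your explicit identity $H^0_\Dcal(j_!j^*E)\cong{^pj_!}\,{^pj^*}(H^0_\Dcal(E))$ just spells out a step the paper leaves implicit in its final sentence.
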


\begin{proof} Let $(\Tcal,\Fcal)$ be a torsion pair in
$\Acal_\Dcal$ and $(\widetilde{\Dcal}^{\leq
0},\widetilde{\Dcal}^{\geq 0})$ be the corresponding HRS-tilt, i.e., 
\begin{eqnarray*}
\widetilde{\Dcal}^{\leq 0} &=& \{X\in\Dcal\mid H^0(X)\in\Tcal,\  H^{i}(X)=0, \forall i>0\},\\
\widetilde{\Dcal}^{\geq 0} &= & \{X\in\Dcal\mid  H^{-1}(X)\in\Fcal,\ H^{i}(X)=0, \forall i<-1\},
\end{eqnarray*}
where $H^i, i\in\Zbb,$ denote the cohomological functors of the t-structure $(\Dcal^{\leq 0},\Dcal^{\geq 0})$.
By \cite{BeilinsonBernsteinDeligne82}, the HRS-tilt is glued if and only if
$j_!j^*(\widetilde{\Dcal}^{\leq 0}) \subseteq \widetilde{\Dcal}^{\leq
0}$. Denote by $\tau^{\leq -1}$ the truncation at $-1$ associated to 
$(\Dcal^{\leq 0},\Dcal^{\geq 0})$. For any $X\in\wt{\Dcal}^{\leq 0}$, there is a triangle in
$\Dcal$ of the form
$$\tau^{\leq -1}X\ra X\ra H^0 X \ra (\tau^{-1}X)[1],$$
since, by definition, $\wt{\Dcal}^{\leq
0}\subseteq \Dcal^{\leq 0}$.
Applying to it the functor $j_!j^*$, we get another triangle
$$j_!j^*(\tau^{\leq -1}X) \ra j_!j^*(X) \ra j_!j^*(H^0 X) \ra j_!j^*(\tau^{\leq -1}X)[1].$$
Since $(\Dcal^{\leq 0},\Dcal^{\geq 0})$ is glued,
$j_!j^*(\Dcal^{\leq -1})\subseteq\Dcal^{\leq -1}\subseteq\wt{\Dcal}^{\leq 0}$. Hence, $j_!j^*(\tau^{\leq -1}X)$ lies in $\wt{\Dcal}^{\leq 0}$ and, thus,
 $j_!j^*(X)$ lies in $\wt{\Dcal}^{\leq 0}$ if and only if
$j_!j^*(H^0X)$ lies in $\wt{\Dcal}^{\leq 0}$. Since $X$ is arbitrary, this means precisely that  $(\widetilde{\Dcal}^{\leq 0},\widetilde{\Dcal}^{\geq 0})$ can be restricted if and only if the torsion pair $(\Tcal,\Fcal)$ is compatible with (\ref{recollement ab}).
\end{proof}

The question that naturally follows is whether, under the compatibility condition on the torsion pair in $\Dcal$, 
the restrictions of the HRS-tilt of a glued t-strucutre in $\Dcal$
are precisely the HRS-tilts of the restricted t-structures on $\Ycal$ and
$\Xcal$ with respect to the restricted torsion pairs. 

\begin{theorem}\label{t:glueing-vs-tilt} Let $(\Tcal,\Fcal)$ be a torsion pair in $\Acal_\Dcal$, compatible with (\ref{recollement ab}) and with restrictions 
$(\Tcal_\Ycal,\Fcal_\Ycal)$ and $(\Tcal_\Xcal,\Fcal_\Xcal)$ in $\Acal_\Ycal$ and $\Acal_\Xcal$
respectively. Let $(\wt{\Ycal}^{\leq 0},\wt{\Ycal}^{\geq 0})$ and
$(\wt{\Xcal}^{\leq 0},\wt{\Xcal}^{\geq 0})$ be the corresponding
HRS-tilts of $(\Ycal^{\leq 0},\Ycal^{\geq 0})$ and $(\Xcal^{\leq
0},\Xcal^{\geq 0})$ respectively. Then the HRS-tilt
$(\wt{\Dcal}^{\leq 0},\wt{\Dcal}^{\geq 0})$ is obtained by glueing $(\wt{\Xcal}^{\leq 0},\wt{\Xcal}^{\geq 0})$ and $(\wt{\Ycal}^{\leq
0},\wt{\Ycal}^{\geq 0})$ with respect to the recollement (\ref{recollement}). \label{HRS-tilt at glued torsion pair}
\end{theorem}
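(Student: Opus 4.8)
The plan is to leverage Proposition~\ref{torsion pair iff HRS-tilt}, which already guarantees that the HRS-tilt $(\wt{\Dcal}^{\leq 0},\wt{\Dcal}^{\geq 0})$ is a t-structure glued with respect to $\Rcal$. Since a glued t-structure coincides with the gluing of its own restrictions to $\Xcal$ and $\Ycal$, it suffices to identify these restrictions: by the characterisation of glued t-structures recalled before Lemma~\ref{restriting torsion pair rhs}, the restriction of $(\wt{\Dcal}^{\leq 0},\wt{\Dcal}^{\geq 0})$ to $\Xcal$ is the t-structure $(j^*\wt{\Dcal}^{\leq 0},j^*\wt{\Dcal}^{\geq 0})$ and its restriction to $\Ycal$ is the t-structure $(i^*\wt{\Dcal}^{\leq 0},i^!\wt{\Dcal}^{\geq 0})$, and I would show these coincide with $(\wt{\Xcal}^{\leq 0},\wt{\Xcal}^{\geq 0})$ and $(\wt{\Ycal}^{\leq 0},\wt{\Ycal}^{\geq 0})$ respectively. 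In each case the mechanism is the same: prove the inclusion of aisles and the inclusion of coaisles into the corresponding HRS-tilt; since, for two t-structures on a fixed triangulated category, an inclusion of aisles is equivalent to the reverse inclusion of coaisles, the two inclusions together force equality.

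For the $\Xcal$-side this is quick, because $j^*$ is t-exact for the glued t-structures, so that $H^i_\Xcal\circ j^*\cong{^pj^*}\circ H^i_\Dcal$ and, by Remark~\ref{exact}, ${^pj^*}$ is simply the restriction of $j^*$ to hearts. Hence, for $E\in\wt{\Dcal}^{\leq 0}$, one has $H^i_\Xcal(j^*E)={^pj^*}(H^i_\Dcal(E))$, which vanishes for $i>0$ and for $i=0$ lies in ${^pj^*}(\Tcal)=\Tcal_\Xcal$; thus $j^*\wt{\Dcal}^{\leq 0}\subseteq\wt{\Xcal}^{\leq 0}$, and the dual computation gives $j^*\wt{\Dcal}^{\geq 0}\subseteq\wt{\Xcal}^{\geq 0}$. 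These two inclusions yield the desired equality on the $\Xcal$-side.

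For the $\Ycal$-side I would use that $i_*$ is t-exact, $i^*$ is right t-exact and $i^!$ is left t-exact for the glued t-structures. Let $E\in\wt{\Dcal}^{\leq 0}\subseteq\Dcal^{\leq 0}$. Right t-exactness of $i^*$ gives $i^*E\in\Ycal^{\leq 0}$, so $H^i_\Ycal(i^*E)=0$ for $i>0$. Applying $i^*$ to the truncation triangle $\tau^{\leq -1}E\ra E\ra H^0_\Dcal(E)\ra(\tau^{\leq -1}E)[1]$ and taking $\Ycal$-cohomology, the fact that $i^*(\tau^{\leq -1}E)\in\Ycal^{\leq -1}$ forces $H^0_\Ycal(i^*E)\cong H^0_\Ycal(i^*\epsilon_\Dcal H^0_\Dcal(E))={^pi^*}(H^0_\Dcal(E))$, which lies in ${^pi^*}(\Tcal)=\Tcal_\Ycal$ because $H^0_\Dcal(E)\in\Tcal$; hence $i^*E\in\wt{\Ycal}^{\leq 0}$. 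Dually, for $E\in\wt{\Dcal}^{\geq 0}\subseteq\Dcal^{\geq -1}$, left t-exactness of $i^!$ gives $i^!E\in\Ycal^{\geq -1}$, so $H^i_\Ycal(i^!E)=0$ for $i<-1$; applying $i^!$ to the triangle $H^{-1}_\Dcal(E)[1]\ra E\ra\tau^{\geq 0}E\ra H^{-1}_\Dcal(E)[2]$ and using $i^!(\tau^{\geq 0}E)\in\Ycal^{\geq 0}$ gives $H^{-1}_\Ycal(i^!E)\cong{^pi^!}(H^{-1}_\Dcal(E))\in{^pi^!}(\Fcal)=\Fcal_\Ycal$, so $i^!E\in\wt{\Ycal}^{\geq 0}$. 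Thus the aisle and coaisle of the t-structure $(i^*\wt{\Dcal}^{\leq 0},i^!\wt{\Dcal}^{\geq 0})$ are contained in those of $(\wt{\Ycal}^{\leq 0},\wt{\Ycal}^{\geq 0})$, and the aisle/coaisle duality again promotes these to equalities; combining the two sides, $(\wt{\Dcal}^{\leq 0},\wt{\Dcal}^{\geq 0})$ is glued from $(\wt{\Xcal}^{\leq 0},\wt{\Xcal}^{\geq 0})$ and $(\wt{\Ycal}^{\leq 0},\wt{\Ycal}^{\geq 0})$.

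The step I expect to require the most care is the extraction of the isomorphisms $H^0_\Ycal(i^*E)\cong{^pi^*}(H^0_\Dcal(E))$ and $H^{-1}_\Ycal(i^!E)\cong{^pi^!}(H^{-1}_\Dcal(E))$ for $E$ in the aisle, resp.\ coaisle, of the HRS-tilt: this forces one to combine the one-sided t-exactness of $i^*$ and $i^!$ with the precise form of the $\Dcal$-truncation triangles, and that precise form (namely $\tau^{\geq 0}E=H^0_\Dcal(E)$, resp.\ $\tau^{\leq -1}E=H^{-1}_\Dcal(E)[1]$) is available exactly because $\wt{\Dcal}^{\leq 0}\subseteq\Dcal^{\leq 0}$ and $\wt{\Dcal}^{\geq 0}\subseteq\Dcal^{\geq -1}$. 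Everything else — the long exact cohomology sequences, and the final passage from four inclusions to two equalities via the standard fact that an aisle determines its coaisle — is routine.
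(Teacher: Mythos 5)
Your proposal is correct and follows essentially the same route as the paper's proof: prove the inclusions $j^*\wt{\Dcal}^{\leq 0}\subseteq\wt{\Xcal}^{\leq 0}$, $j^*\wt{\Dcal}^{\geq 0}\subseteq\wt{\Xcal}^{\geq 0}$, $i^*\wt{\Dcal}^{\leq 0}\subseteq\wt{\Ycal}^{\leq 0}$, $i^!\wt{\Dcal}^{\geq 0}\subseteq\wt{\Ycal}^{\geq 0}$ using the (one-sided) t-exactness of $j^*$, $i^*$, $i^!$, then use Proposition \ref{torsion pair iff HRS-tilt} together with the fact that an aisle determines its coaisle to upgrade the inclusions to equalities. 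The only cosmetic differences are that you derive the identities $H^0_\Ycal(i^*E)\cong{^pi^*}H^0_\Dcal(E)$ and $H^{-1}_\Ycal(i^!E)\cong{^pi^!}H^{-1}_\Dcal(E)$ directly from the truncation triangles instead of citing \cite[proposition 1.3.17]{BeilinsonBernsteinDeligne82}, and that you consistently phrase the coaisle condition on the $\Ycal$-side via $i^!$, which is the correct functor for the glued coaisle.
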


\begin{proof} Let $H^i_\Dcal$, $H^i_\Xcal$ and $H^i_\Ycal$, $i\in\Zbb$, denote the cohomological functors associated with $(\Dcal^{\leq 0},\Dcal^{\geq 0})$, $(\Xcal^{\leq 0},\Xcal^{\geq 0})$ and $(\Ycal^{\leq 0},\Ycal^{\geq 0})$ respectively. We have to show that the following holds:
\begin{itemize}
\item[(i)] $j^*(\wt{\Dcal}^{\leq 0}) =\wt{\Xcal}^{\leq 0}$, $j^*(\wt{\Dcal}^{\geq 0}) =\wt{\Xcal}^{\geq
0}$;
\item[(ii)] $i^*(\wt{\Dcal}^{\leq 0}) = \wt{\Ycal}^{\leq 0}$, $i^!(\wt{\Dcal}^{\geq 0}) = \wt{\Ycal}^{\geq 0}$.
\end{itemize}
By definition, we have $$j^*(\wt{\Dcal}^{\leq 0}) =\{j^*X\mid X\in\Dcal,\ H^0_\Dcal(X)\in\Tcal,\ H^{i}_\Dcal(X)=0, \forall i>0\},$$  and, since $j^*$ is t-exact (\cite{BeilinsonBernsteinDeligne82}),
$H^i_\Xcal(j^*X)=j^*(H^i_\Dcal(X))$, thus showing that
$$j^*(\wt{\Dcal}^{\leq 0}) \subseteq \{j^*X\mid X\in\Dcal,\
H^0_\Xcal(j^*X)\in
j^*\Tcal,\ H^{i}_\Xcal(j^*X)=0,\forall i>0\}=\wt{\Xcal}^{\leq 0}.$$ Similarly, we get that
$$j^*(\wt{\Dcal}^{\geq 0}) \subseteq \{j^*X\mid X\in\Dcal,\
H^{-1}_\Xcal(j^*X)\in
j^*\Tcal,\ H^{i}_\Xcal(j^*X)=0,\forall i<-1\}=\wt{\Xcal}^{\geq 0}.$$ Since $(\Tcal,\Fcal)$ is compatible with the recollement, by proposition \ref{torsion pair
iff HRS-tilt}, the HRS-tilt $(\wt{\Dcal}^{\leq 0},\wt{\Dcal}^{\geq
0})$ can be restricted and, thus, $(j^*(\wt{\Dcal}^{\leq
0}),j^*(\wt{\Dcal}^{\geq 0}))$ is a t-structure in $\Xcal$. Since
$j^*(\wt{\Dcal}^{\leq 0})\subseteq \wt{\Xcal}^{\leq 0}$, we also have
that $j^*(\wt{\Dcal}^{\geq 0}) \supseteq \wt{\Xcal}^{\geq 0}$ and, thus,
$j^*(\wt{\Dcal}^{\geq 0})=\wt{\Xcal}^{\geq 0}$. This
proves (i).

To prove (ii), observe that $i^*$ is right t-exact (i.e., $i^*({\Dcal}^{\leq
0})\subseteq\Ycal^{\leq 0}$) and that
$$i^*(\wt{\Dcal}^{\leq 0})=\{i^*X\mid X\in\Dcal,\ H^0_\Dcal(X)\in\Tcal, \ H^{i}_\Dcal(X)=0,\forall i>0\}.$$

By \cite[proposition 1.3.17 (ii)]{BeilinsonBernsteinDeligne82}, for
$X$ in $\wt{\Dcal}^{\leq 0} \subseteq \Dcal^{\leq 0}$, we have
$H^0_\Ycal(i^*X)={^pi^*}H^0_\Dcal(X)$ which belongs to
${^pi^*}(\Tcal)$. Since $i^*(\wt{\Dcal}^{\leq 0})\subseteq
i^*({\Dcal}^{\leq 0}) \subseteq \Ycal^{\leq 0}$, we get that
$H^{i}_\Ycal(i^*\wt{\Dcal}^{\leq 0})=0$, for all $i>0$, and thus
$$i^*(\wt{\Dcal}^{\leq 0})\subseteq\{i^*X\mid X\in\Dcal,\ H^0_\Ycal(i^*X)\in {^pi^*}\Tcal,\ H^{i}_\Ycal(i^*X)=0,\forall i>0 \}=\wt{\Ycal}^{\leq 0}.$$
Analogously, we may conclude that 
$$i^*(\wt{\Dcal}^{\geq 0})\subseteq\{i^*X\mid X\in\Dcal,\ H^{-1}_\Ycal(i^*X)\in {^pi^*}\Fcal, H^{i}_\Ycal(i^*X)=0,\forall i<-1\}=\wt{\Ycal}^{\geq 0}.$$
The same argument as before shows that an equality holds, as wanted.
\end{proof}

The previous result showed that an HRS-tilt with respect to a compatible torsion pair is glued. We may now ask whether HRS-tilts on the sides of the recollement (\ref{recollement}) glue to an HRS-tilt in the middle. The following proposition answers this question positively.

\begin{proposition}\label{p:glueing-torsion-pair} Let $(\Tcal_\Ycal,\Fcal_\Ycal)$ and
$(\Tcal_\Xcal,\Fcal_\Xcal)$ be torsion pairs in $\Acal_\Ycal$ and
$\Acal_\Xcal$ respectively. Then the pair $(\Tcal,\Fcal)$ defined by
\begin{eqnarray*}
\Tcal &=& \{A\in\Acal_\Dcal\mid {^pi^*}A\in\Tcal_\Ycal,\
{^pj^*}A\in\Tcal_\Xcal\}\\
\Fcal &=& \{A\in\Acal_\Dcal\mid {^pi^!}A\in\Fcal_\Ycal,\
{^pj^*}A\in\Fcal_\Xcal\}.
\end{eqnarray*}
is a torsion pair in $\Acal_\Dcal$, compatible with (\ref{recollement ab}). Moreover, its restrictions are given by $(\Tcal_\Xcal,\Fcal_\Xcal)$ and $(\Tcal_\Ycal, \Fcal_\Ycal)$, and the HRS-tilt of $(\Dcal^{\leq
0},\Dcal^{\geq 0})$ with respect to $(\Tcal,\Fcal)$ is the glueing of the
HRS-tilts of $(\Ycal^{\leq 0},\Ycal^{\geq 0})$ and $(\Xcal^{\leq
0},\Xcal^{\geq 0})$ with respect to $(\Tcal_\Ycal,\Fcal_\Ycal)$ and
$(\Tcal_\Xcal,\Fcal_\Xcal)$ respectively.
\end{proposition}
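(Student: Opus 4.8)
The plan is to avoid checking the torsion-pair axioms for $(\Tcal,\Fcal)$ by hand, and instead to exhibit its HRS-tilt as a t-structure glued along~(\ref{recollement}), using the characterisation of HRS-tilts as intermediate t-structures (Remark~\ref{inter}). First I would form the HRS-tilts $(\wt{\Ycal}^{\leq 0},\wt{\Ycal}^{\geq 0})$ and $(\wt{\Xcal}^{\leq 0},\wt{\Xcal}^{\geq 0})$ of $(\Ycal^{\leq 0},\Ycal^{\geq 0})$ and $(\Xcal^{\leq 0},\Xcal^{\geq 0})$ with respect to $(\Tcal_\Ycal,\Fcal_\Ycal)$ and $(\Tcal_\Xcal,\Fcal_\Xcal)$ (these exist by Theorem~\ref{HRS tilting}), and glue them along~(\ref{recollement}) via Theorem~\ref{glue}(2) to obtain a bounded t-structure $(\wt{\Dcal}^{\leq 0},\wt{\Dcal}^{\geq 0})$ which is, by construction, glued with respect to the recollement. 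The next step is to observe that this t-structure is intermediate between $\Dcal^{\leq -1}$ and $\Dcal^{\leq 0}$: from the glueing formula $\wt{\Dcal}^{\leq 0}=\{Z\mid j^*Z\in\wt{\Xcal}^{\leq 0},\ i^*Z\in\wt{\Ycal}^{\leq 0}\}$, the t-exactness of $j^*$, the right t-exactness of $i^*$, and the fact that each of $\wt{\Xcal}^{\leq 0},\wt{\Ycal}^{\leq 0}$ is itself squeezed between the $(-1)$-aisle and the $0$-aisle (being an HRS-tilt), it follows that $\Dcal^{\leq -1}\subseteq\wt{\Dcal}^{\leq 0}\subseteq\Dcal^{\leq 0}$. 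Hence, by Remark~\ref{inter}, $(\wt{\Dcal}^{\leq 0},\wt{\Dcal}^{\geq 0})$ is the HRS-tilt of $(\Dcal^{\leq 0},\Dcal^{\geq 0})$ with respect to a unique torsion pair $(\Tcal',\Fcal')$ in $\Acal_\Dcal$, and from the definition of an HRS-tilt one reads off $\Tcal'=\Acal_\Dcal\cap\wt{\Dcal}^{\leq 0}$ and $\Fcal'=\Acal_\Dcal\cap\wt{\Dcal}^{\geq 1}$.

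I would then identify $(\Tcal',\Fcal')$ with the pair $(\Tcal,\Fcal)$ of the statement by a cohomological computation. For $A\in\Acal_\Dcal$, the object $i^*A$ lies in $\Ycal^{\leq 0}$ (right t-exactness of $i^*$), so the condition $i^*A\in\wt{\Ycal}^{\leq 0}$ reduces to $H^0_\Ycal(i^*A)\in\Tcal_\Ycal$; and $H^0_\Ycal(i^*A)={^pi^*}A$ by the defining formula for ${^pi^*}$. Similarly, by t-exactness of $j^*$ and Remark~\ref{exact}, the condition $j^*A\in\wt{\Xcal}^{\leq 0}$ reduces to $j^*A={^pj^*}A\in\Tcal_\Xcal$. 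Since $\wt{\Dcal}^{\leq 0}$ is glued, combining these gives $\Tcal'=\{A\in\Acal_\Dcal\mid {^pi^*}A\in\Tcal_\Ycal,\ {^pj^*}A\in\Tcal_\Xcal\}=\Tcal$. The dual computation, using $i^!A\in\Ycal^{\geq 0}$ for $A\in\Acal_\Dcal$ (left t-exactness of $i^!$) and $H^0_\Ycal(i^!A)={^pi^!}A$, yields $\Fcal'=\{A\in\Acal_\Dcal\mid {^pi^!}A\in\Fcal_\Ycal,\ {^pj^*}A\in\Fcal_\Xcal\}=\Fcal$. This proves simultaneously that $(\Tcal,\Fcal)$ is a torsion pair and that its HRS-tilt is $(\wt{\Dcal}^{\leq 0},\wt{\Dcal}^{\geq 0})$, the glueing of the two given HRS-tilts — which is the last assertion of the proposition. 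Compatibility of $(\Tcal,\Fcal)$ with~(\ref{recollement ab}) is then immediate from Proposition~\ref{torsion pair iff HRS-tilt}, since this HRS-tilt is glued.

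For the restrictions, I would argue directly rather than invoke Theorem~\ref{t:glueing-vs-tilt}. By Lemma~\ref{restriting torsion pair rhs}, the restrictions are $({^pi^*}\Tcal,{^pi^!}\Fcal)$ in $\Acal_\Ycal$ and $({^pj^*}\Tcal,{^pj^*}\Fcal)$ in $\Acal_\Xcal$. The inclusions ${^pi^*}\Tcal\subseteq\Tcal_\Ycal$, ${^pj^*}\Tcal\subseteq\Tcal_\Xcal$, ${^pi^!}\Fcal\subseteq\Fcal_\Ycal$, ${^pj^*}\Fcal\subseteq\Fcal_\Xcal$ hold by the very definition of $\Tcal$ and $\Fcal$. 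Conversely, for $M\in\Tcal_\Xcal$ the object ${^pj_!}M$ lies in $\Tcal$ because ${^pi^*}{^pj_!}M=0$, and ${^pj^*}{^pj_!}M=M$; the same device with ${^pi_*}M$ (respectively ${^pi_*}N$, ${^pj_*}N$) and the recollement identities ${^pj^*}{^pi_*}=0$, ${^pi^!}{^pj_*}=0$ shows that every object of $\Tcal_\Ycal$ (respectively $\Fcal_\Ycal$, $\Fcal_\Xcal$) is hit. Hence the restrictions are exactly $(\Tcal_\Ycal,\Fcal_\Ycal)$ and $(\Tcal_\Xcal,\Fcal_\Xcal)$, as claimed.

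The step I expect to demand the most care is the cohomological identification of $(\Tcal',\Fcal')$ in the second paragraph: one must keep track of which of the six functors are t-exact and which are only one-sidedly t-exact, apply the formulas ${^pi^*}=H^0_\Ycal\circ i^*\circ\epsilon_\Dcal$ and its siblings only to objects of $\Acal_\Dcal$, and verify that the ``extra'' cohomological vanishing built into the definition of an HRS-tilt aisle is automatically satisfied there. Everything else is formal manipulation with the adjunctions and the vanishing relations of the recollement~(\ref{recollement ab}).
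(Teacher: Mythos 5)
Your proposal is correct and follows essentially the same route as the paper: glue the side HRS-tilts, show the result is intermediate between $\Dcal^{\leq -1}$ and $\Dcal^{\leq 0}$, and use Remark \ref{inter} together with the cohomological identities $H^0_\Ycal\circ i^*={^pi^*}$, $j^*={^pj^*}$ on the heart to identify the associated torsion pair with $(\Tcal,\Fcal)$. Your only (harmless) deviations are organizational: you deduce compatibility from Proposition \ref{torsion pair iff HRS-tilt} instead of checking ${^pj_!}{^pj^*}\Tcal\subseteq\Tcal$ directly, and you identify the restrictions by exhibiting preimages under ${^pi_*}$, ${^pj_!}$, ${^pj_*}$ rather than by the rigidity of nested torsion pairs.
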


\begin{proof}  We will use remark \ref{inter} to show that $(\Tcal,\Fcal)$ is a torsion pair, i.e., we will show that the pair $(\Dcal_{(\Tcal,\Fcal)}^{\leq 0},\Dcal_{(\Tcal,\Fcal)}^{\geq 0})$ formally defined as the HRS-tilt of $(\Dcal^{\leq 0},\Dcal^{\geq 0})$ with respect to $(\Tcal,\Fcal)$ is a t-structure (satisfying $\Dcal^{\leq -1}\subseteq\Dcal^{\leq 0}_{(\Tcal,\Fcal)}\subseteq \Dcal^{\leq 0}$). Denote by $(\wt{\Ycal}^{\leq 0},\wt{\Ycal}^{\geq 0})$ and
$(\wt{\Xcal}^{\leq 0},\wt{\Xcal}^{\geq 0})$ the HRS-tilts of
$(\Ycal^{\leq 0},\Ycal^{\geq 0})$
and $(\Xcal^{\leq 0},\Xcal^{\geq 0})$ at 
$(\Tcal_\Ycal,\Fcal_\Ycal)$ and $(\Tcal_\Xcal,\Fcal_\Xcal)$
respectively and let $(\wt{\Dcal}^{\leq 0},\wt{\Dcal}^{\geq
0})$ be the glueing of $(\wt{\Ycal}^{\leq 0},\wt{\Ycal}^{\geq
0})$ and $(\wt{\Xcal}^{\leq 0}, \wt{\Xcal}^{\geq 0})$  with respect to the fixed recollement, i.e.,
\begin{eqnarray*}
\wt{\Dcal}^{\leq 0} & = & \{X\in\Dcal \mid i^*(X)\in\wt{\Ycal}^{\leq 0},\,j^*(X)\in\wt{\Xcal}^{\leq 0}\} \\
\wt{\Dcal}^{\geq 0} & = & \{X\in\Dcal \mid i^!(X)\in\wt{\Ycal}^{\geq 0},\,j^*(X)\in\wt{\Xcal}^{\geq 0}\}. 
\end{eqnarray*}
We will show that $(\wt{\Dcal}^{\leq 0},\wt{\Dcal}^{\geq
0})=(\Dcal_{(\Tcal,\Fcal)}^{\leq 0},\Dcal_{(\Tcal,\Fcal)}^{\geq 0})$. As usual, $H^i_\Dcal$, $H^i_\Xcal$ and $H^i_\Ycal$, $i\in\Zbb$, denote the cohomological functors associated with $(\Dcal^{\leq 0},\Dcal^{\geq 0})$, $(\Xcal^{\leq 0},\Xcal^{\geq 0})$ and $(\Ycal^{\leq 0},\Ycal^{\geq 0})$, respectively.

First we show that $\Dcal_{(\Tcal,\Fcal)}^{\leq 0}$ is a subcategory of $\wt{\Dcal}^{\leq
0}$. Take $X$ in $\Dcal_{(\Tcal,\Fcal)}^{\leq 0}(\subseteq\Dcal^{\leq 0})$.
Since $i^*$ is right t-exact, $i^*(X)$ lies in $\Ycal^{\leq 0}$, i.e., $H^{i}_\Ycal(i^*X)=0$, for all $i>0$. By \cite[proposition 1.3.17(ii)]{BeilinsonBernsteinDeligne82},
$H^0_\Ycal(i^*X)={^p}i^*H^0_\Dcal(X)$ lies in ${^p}i^*(\Tcal)\subseteq\Tcal_\Ycal$, showing that $i^*(X)\in\wt{\Ycal}^{\leq 0}$. Since $j^*$ is t-exact, we have
$H^{i}_\Xcal(j^*X)=j^*H^{i}_\Dcal(X)=0$, for all $i>0$, and
$H^0_\Xcal(j^*X)=j^*H^0_\Dcal(X)$ lies in $j^*(\Tcal)\subseteq\Tcal_\Xcal$, i.e., $j^*X$ lies in $\wt{\Xcal}^{\leq 0}$ and, hence, $X$ lies in $\wt{\Dcal}^{\leq
0}$. 

Now we prove that $\wt{\Dcal}^{\leq 0}$ is a subcategory of $\Dcal_{(\Tcal,\Fcal)}^{\leq
0}$. Take $X$ in $\wt{\Dcal}^{\leq 0}$. 
By definition of HRS-tilt, we have that $\wt{\Ycal}^{\leq 0}\subset
\Ycal^{\leq 0}$ and $\wt{\Xcal}^{\leq 0}\subseteq \Xcal^{\leq 0}$.
This implies that $\wt{\Dcal}^{\leq 0}\subseteq \Dcal^{\leq 0}$ and, hence, $H^{i}_\Dcal(X)=0$, for all $i>0$.
By \cite{BeilinsonBernsteinDeligne82} (proposition 1.3.17(ii)), we have ${^pi^*}H^0_\Dcal(X) =
H^0_\Ycal(i^*X)$ which belongs to $\Tcal_\Ycal$. By the t-exactness of $j^*$,
${^pj^*}H^0_\Dcal(X)=H^0_\Xcal(j^*X)$ which, thus, belongs to
$\Tcal_\Xcal$. This means that, by definition of $\Tcal$, $H^0_\Dcal(X)$ belongs to
$\Tcal$. Thus $X$ belongs to $\wt{\Dcal}^{\leq 0}$ and $\wt{\Dcal}^{\leq 0}=\Dcal_{(\Tcal,\Fcal)}^{\leq 0}$, as wanted.

It remains to show that $(\Tcal,\Fcal)$ is compatible and that the restrictions are as expected. By lemma
\ref{restriting torsion pair rhs}, 
we need to show
${^pj_!}{^pj^*}\Tcal\subseteq\Tcal$. This is true since, for any
$A$ in $\Tcal$, ${^pi^*}{^pj_!}{^pj^*}A=0$ and
${^pj^*}{^pj_!}{^pj^*}A={^pj^*}A\subseteq\Tcal_\Xcal$. 
It follows that
$({^pi^*}(\Tcal),{^pi^!}(\Fcal))$ and
$({^pj^*}(\Tcal),{^pj^*}(\Fcal))$ are torsion pairs in $\Acal_\Ycal$ and
$\Acal_\Xcal$ respectively. On the other hand, by definition of
$(\Tcal,\Fcal)$, we have
$${^pi^*}(\Tcal)\subseteq\Tcal_\Ycal,\ \
{^pi^!}(\Fcal)\subseteq\Fcal_\Ycal,$$
$${^pj^*}(\Tcal)\subseteq\Tcal_\Xcal,\ \
{^pj^*}(\Fcal)\subseteq\Fcal_\Xcal,$$ and,  thus, the inclusions are actually equalities, as wanted. 
\end{proof}

We illustrate proposition \ref{p:glueing-torsion-pair} in a simple example.
\begin{example}
We adopt the notation in example~\ref{ex:hereditary} and consider the standard recollement there. 
Let $(\Dcal^{\leq 0},\Dcal^{\geq 0})$ be the standard t-structure on $\Dcal^b(R)$, which is depicted in the Auslander-Reiten quiver of $\Dcal^b(R)$ as
\[
\begin{xy} 0;<0.65pt,0pt>:<0pt,-0.65pt>::
(20,25) *+{\cdots} ="0", 
(50,0) *+{\makebox[21pt]{\tiny$P_2[-2]$}} ="1", 
(100,50) *+{\makebox[21pt]{\tiny $S_2[-2]$}} ="2",
(150,0) *+{\makebox[21pt]{\tiny$S_1[-1]$}}="3",
(200,50) *+{\makebox[21pt]{\tiny $P_2[-1]$}}="4",
(250,0) *+{\makebox[21pt]{\tiny$S_2[-1]$}}="5",
(300,50) *+{\framebox(9,9){\parbox{5pt}{\tiny$S_1$}}}="6",
(350,0) *+{\framebox(9,9){\parbox{5pt}{\tiny$P_2$}}}="7",
(400,50) *+{\framebox(9,9){\parbox{5pt}{\tiny$S_2$}}}="8",
(450,0) *+{\framebox(18,9){\parbox{15pt}{\tiny$S_1[1]$}}}="9",
(500,50) *+{\framebox(18,9){\parbox{15pt}{\tiny$P_2[1]$}}}="10",
(550,0) *+{\framebox(18,9){\parbox{15pt}{\tiny$S_2[1]$}}}="11",
(600,50) *+{\framebox(18,9){\parbox{15pt}{\tiny$S_1[2]$}} }="12",
(650,0) *+{ \framebox(18,9){\parbox{15pt}{\tiny$P_2[2]$}} }="13",
(680,25) *+{\cdots}="14",
 "1", {\ar"2"}, 
 "2", {\ar"3"},
 "3", {\ar"4"},
 "4", {\ar"5"},
 "5", {\ar"6"},
 "6", {\ar"7"},
 "7", {\ar"8"},
 "8", {\ar"9"},
 "9", {\ar"10"},
 "10", {\ar"11"},
 "11", {\ar"12"},
 "12", {\ar"13"},
\end{xy}
\]
where the objects in the boxes belong to the aisle $\Dcal^{\leq 0}$. 
This t-structure restricts to standard t-structures $(\Xcal^{\leq 0},\Xcal^{\geq 0})$ and $(\Ycal^{\leq 0},\Ycal^{\geq 0})$ on $\Dcal^b(eRe)$ and $\Dcal^b(R/ReR)$, respectively. Recall that both $eRe$ and $R/ReR$ are isomorphic to $\K$. Consequently, all bounded t-structures on $\Dcal^b(eRe)$ and $\Dcal^b(R/ReR)$ are shifts of the standard ones, and all mutation torsion pairs of $\mathrm{mod-}eRe$ and $\mathrm{mod-}R/ReR$ are trivial torsion pairs.

The HRS-tilt of $(\Xcal^{\leq 0},\Xcal^{\geq 0})$ with respect to the torsion pair $(0,\mathrm{mod-}eRe)$ is $(\Xcal^{\leq 0},\Xcal^{\geq 0})$, and the HRS-tilt of $(\Ycal^{\leq 0},\Ycal^{\geq 0})$ with respect to the torsion pair $(\mathrm{mod-}R/ReR,0)$  is $(\Ycal^{\leq 0}[1],\Ycal^{\geq 0}[1])$. These two new t-structures are glued, via the standard recollement,   to a t-structure $(\Dcal'^{\leq 0},\Dcal'^{\geq 0})$ on $\Dcal^b(R)$, which is depicted as
\[
\begin{xy} 0;<0.65pt,0pt>:<0pt,-0.65pt>::
(20,25) *+{\cdots} ="0", 
(50,0) *+{\makebox[21pt]{\tiny$P_2[-2]$}} ="1", 
(100,50) *+{\makebox[21pt]{\tiny $S_2[-2]$}} ="2",
(150,0) *+{\makebox[21pt]{\tiny$S_1[-1]$}}="3",
(200,50) *+{\makebox[21pt]{\tiny $P_2[-1]$}}="4",
(250,0) *+{\makebox[21pt]{\tiny$S_2[-1]$}}="5",
(300,50) *+{\framebox(9,9){\parbox{5pt}{\tiny$S_1$}}}="6",
(350,0) *+{\makebox[9pt]{\tiny$P_2$}}="7",
(400,50) *+{\makebox[9pt]{\tiny$S_2$}}="8",
(450,0) *+{\framebox(18,9){\parbox{15pt}{\tiny$S_1[1]$}}}="9",
(500,50) *+{\framebox(18,9){\parbox{15pt}{\tiny$P_2[1]$}}}="10",
(550,0) *+{\framebox(18,9){\parbox{15pt}{\tiny$S_2[1]$}}}="11",
(600,50) *+{\framebox(18,9){\parbox{15pt}{\tiny$S_1[2]$}} }="12",
(650,0) *+{ \framebox(18,9){\parbox{15pt}{\tiny$P_2[2]$}} }="13",
(680,25) *+{\cdots}="14",
 "1", {\ar"2"}, 
 "2", {\ar"3"},
 "3", {\ar"4"},
 "4", {\ar"5"},
 "5", {\ar"6"},
 "6", {\ar"7"},
 "7", {\ar"8"},
 "8", {\ar"9"},
 "9", {\ar"10"},
 "10", {\ar"11"},
 "11", {\ar"12"},
 "12", {\ar"13"},
\end{xy}
\]
where the objects in the boxes belong to the aisle $\Dcal'^{\leq 0}$. The heart of $(\Dcal'^{\leq 0},\Dcal'^{\geq 0})$ is $add(S_1\oplus S_2[1])$. It is easy to check that $(\Dcal'^{\leq 0},\Dcal'^{\geq 0})$ is the HRS-tilt of $(\Dcal^{\leq 0},\Dcal^{\geq 0})$ at the torsion pair $(add(S_1),add(S_2))$, which is glued from the torsion pairs $(0,\mathrm{mod-}eRe)$  and $(\mathrm{mod-}R/ReR,0)$.
\end{example}

By subsection \ref{HRS-tilts and silting mutation}, we know that irreducible silting mutations correspond to HRS-tilts (see theorem \ref{mutations commute}) with respect to mutation torsion pairs in the heart (i.e., torsion pairs in which either the torsion class or the torsion-free class are given by the additive closure of a simple object). We analyse when such torsion pairs are compatible with (\ref{recollement ab}).

\begin{remark}\label{simples in the heart}
Recall that there is a functor $j_{!*}:\Acal_\Xcal\rightarrow \Acal_\Dcal$, called the intermediate image functor, such that every simple object in $\Acal_\Dcal$ is either of the form $i_*S$ for some simple object in $\Acal_\Ycal$ or of the form $j_{!*}S$ for some simple object in $\Acal_\Xcal$. For details, check \cite[section 1.4]{BeilinsonBernsteinDeligne82}.
\end{remark} 

\begin{proposition} Suppose that $\Acal_\Dcal$ is a  length category. 
Let $(\Tcal,\Fcal)$ be a mutation torsion pair in $\Acal_\Dcal$. Then $(\Tcal, \Fcal)$ is compatible with the recollement (\ref{recollement ab}) if and only if one of the following holds.
\begin{enumerate}
\item $\Tcal=$ add$(i_*S_Y)$ for some simple object $S_Y$ in $\Acal_\Ycal$;
\item $\Fcal=$ add$(i_*S_Y)$ for some simple object $S_Y$ in $\Acal_\Ycal$;
\item there is a simple object $S_X$ in $\Acal_\Xcal$ such that ${^pj_!}S_X$ is simple and $\Tcal=$ add$({^pj_!}S_X)$;
\item there is a simple object $S_X$ in $\Acal_\Xcal$ such that ${^pj_*}S_X$ is simple and $\Fcal=$ add$({^pj_*}S_X)$.
\end{enumerate}
Moreover, in each case, the restrictions are a trivial torsion pair on one side and a mutation torsion pair on the other.
\end{proposition}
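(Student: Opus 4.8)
The key observation is that a mutation torsion pair $(\Tcal,\Fcal)$ in a length category $\Acal_\Dcal$ is determined by a single simple object $S$ without self-extensions, with either $\Tcal = \mathrm{add}(S)$ (so $\Fcal = S^\perp$) or $\Fcal = \mathrm{add}(S)$ (so $\Tcal = {^\perp}S$). By remark \ref{simples in the heart}, every simple object of $\Acal_\Dcal$ is of the form $i_*S_Y$ for a simple $S_Y$ in $\Acal_\Ycal$, or of the form $j_{!*}S_X$ for a simple $S_X$ in $\Acal_\Xcal$. The four cases in the statement are exactly the four combinations of (torsion class versus torsion-free class) with (simple of the first type versus simple of the second type), where in the second-type cases the hypothesis ``${^pj_!}S_X$ simple'' (resp.\ ``${^pj_*}S_X$ simple'') is what forces $j_{!*}S_X$ to coincide with ${^pj_!}S_X$ (resp.\ ${^pj_*}S_X$); this is a standard fact from \cite[section 1.4]{BeilinsonBernsteinDeligne82} on the intermediate extension functor. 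So the first task is to record this dichotomy and then check the compatibility condition of lemma \ref{restriting torsion pair rhs}(2) in each of the four cases.

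\textbf{Checking compatibility, case by case.} I would use the criterion ${^pj_!}{^pj^*}\Tcal\subseteq\Tcal$ (equivalently ${^pj_*}{^pj^*}\Fcal\subseteq\Fcal$). In case (1), $\Tcal=\mathrm{add}(i_*S_Y)$, so ${^pj^*}\Tcal=0$ because $j^*i_*=0$ at the triangulated level and hence ${^pj^*}{^pi_*}=0$; thus ${^pj_!}{^pj^*}\Tcal=0\subseteq\Tcal$ and compatibility holds. In case (2), $\Fcal=\mathrm{add}(i_*S_Y)$, so symmetrically ${^pj_*}{^pj^*}\Fcal=0\subseteq\Fcal$. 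In case (3), $\Tcal=\mathrm{add}({^pj_!}S_X)$ with ${^pj_!}S_X$ simple; applying ${^pj^*}$ and using ${^pj^*}{^pj_!}\cong\mathrm{id}$ (which comes from $j^*j_!\cong\mathrm{id}$ and exactness of the adjunction unit on simples) gives ${^pj^*}\Tcal=\mathrm{add}(S_X)$, and then ${^pj_!}{^pj^*}\Tcal=\mathrm{add}({^pj_!}S_X)=\Tcal$. Case (4) is the dual, using ${^pj^*}{^pj_*}\cong\mathrm{id}$. Conversely, to show these are the \emph{only} compatible mutation torsion pairs, I would argue that if $\Tcal=\mathrm{add}(i_*S_Y)$ fails and $\Fcal=\mathrm{add}(i_*S_Y)$ fails, then the distinguished simple $S$ must be of the form $j_{!*}S_X$; if $S=j_{!*}S_X$ and $\Tcal=\mathrm{add}(S)$, compatibility ${^pj_!}{^pj^*}S\subseteq\mathrm{add}(S)$ together with ${^pj^*}j_{!*}S_X\cong S_X$ and the surjection ${^pj_!}S_X\twoheadrightarrow j_{!*}S_X$ forces ${^pj_!}S_X$ to be simple and hence equal to $j_{!*}S_X$ (otherwise ${^pj_!}{^pj^*}S={^pj_!}S_X$ is not in $\mathrm{add}(S)$); dually if $\Fcal=\mathrm{add}(S)$ one uses ${^pj_*}$ and the injection $j_{!*}S_X\hookrightarrow {^pj_*}S_X$.

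\textbf{Identifying the restrictions.} For the final sentence, I would invoke proposition \ref{p:glueing-torsion-pair} together with theorem \ref{t:glueing-vs-tilt}: once compatibility is established, the restricted torsion pairs are $({^pi^*}\Tcal,{^pi^!}\Fcal)$ in $\Acal_\Ycal$ and $({^pj^*}\Tcal,{^pj^*}\Fcal)$ in $\Acal_\Xcal$. In case (1), ${^pi^*}\Tcal={^pi^*}{^pi_*}\mathrm{add}(S_Y)=\mathrm{add}(S_Y)$, so the $\Ycal$-side restriction is the mutation torsion pair $(\mathrm{add}(S_Y),S_Y^\perp)$, while ${^pj^*}\Tcal=0$ forces the $\Xcal$-side to be the trivial torsion pair $(0,\Acal_\Xcal)$. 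Case (2) is dual, giving the mutation torsion pair $({^\perp}S_Y,\mathrm{add}(S_Y))$ on $\Ycal$ and the trivial pair $(\Acal_\Xcal,0)$ on $\Xcal$. In case (3), ${^pj^*}\Tcal=\mathrm{add}(S_X)$ and ${^pi^*}\Tcal={^pi^*}{^pj_!}S_X=0$ (since $i^*j_!=0$), so the $\Xcal$-restriction is the mutation pair $(\mathrm{add}(S_X),S_X^\perp)$ and the $\Ycal$-restriction is trivial; case (4) is dual. The only genuinely delicate point, and the one I would expect to take the most care, is the equivalence between compatibility and the simplicity of ${^pj_!}S_X$ (resp.\ ${^pj_*}S_X$) in cases (3) and (4) — i.e.\ correctly exploiting the universal properties of the intermediate extension functor $j_{!*}$ from \cite{BeilinsonBernsteinDeligne82} to see both directions; the rest reduces to bookkeeping with the vanishing relations $j^*i_*=0$, $i^*j_!=0$, $i^!j_*=0$ and the adjunction isomorphisms at the level of the abelian recollement (\ref{recollement ab}).
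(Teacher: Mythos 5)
Your proposal is correct and follows essentially the same route as the paper: the same reduction of a mutation torsion pair to a single simple object of $\Acal_\Dcal$, the same dichotomy $i_*S_Y$ versus $j_{!*}S_X$ from remark \ref{simples in the heart}, the same compatibility check via lemma \ref{restriting torsion pair rhs}(2) using $j^*i_*=0$ in cases (1)--(2), and the same use of $j^*j_{!*}\cong\mathrm{id}$, ${^pj^*}{^pj_!}\cong\mathrm{id}$ and the canonical epimorphism ${^pj_!}S_X\twoheadrightarrow j_{!*}S_X$ (resp.\ monomorphism $j_{!*}S_X\hookrightarrow{^pj_*}S_X$) to show that compatibility in cases (3)--(4) is equivalent to simplicity of ${^pj_!}S_X$ (resp.\ ${^pj_*}S_X$). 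The identification of the restrictions also matches the paper's computation.
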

\begin{proof}
This can be showed using lemma \ref{restriting torsion pair rhs} on a case by case analysis of the possible mutation torsion pairs in $\Acal_\Dcal$. Using remark \ref{simples in the heart} we get the following four cases, where $S_Y$ is a simple object in $\Acal_\Ycal$ and $S_X$ is a simple object in $\Acal_\Xcal$.
\begin{enumerate}
\item Suppose that $\Tcal=$ add$(i_*S_Y)$. Since $j^*i_*=0$, we have that ${^pj_!}j^*\Tcal=0$ and, thus, the torsion pair $(\Tcal,\Fcal)$ is compatible with the recollement. The restriction to $\Xcal$ is $(0, \Xcal)$ and the restriction to $\Ycal$ is the mutation torsion pair with torsion class $\Tcal_\Ycal=$ add$(S_Y)$.
\item Suppose that $\Fcal=$ add$(i_*S_Y)$. Since $j^*i_*=0$, we have that ${^pj_*}j^*\Tcal=0$ and, thus, the torsion pair $(\Tcal,\Fcal)$ is compatible with the recollement. The restriction to $\Xcal$ is  $(\Xcal, 0)$ and the restriction to $\Ycal$ is the mutation torsion pair with torsion-free class $\Fcal_\Ycal=$ add$(S_Y)$.
\item Suppose that $\Tcal=$ add$(j_{!*}S_X)$. By lemma \ref{restriting torsion pair rhs} the torsion pair $(\Tcal,\Fcal)$ is compatible with the recollement if and only if $^pj_! j^*(j_{!*}S_X)$ lies in $add(j_{!*}S_X)$. Since $j^*j_{!*}$ is naturally equivalent to the identity functor, this amounts to $^pj_! S_X$ lying in $add(j_{!*}S_X)$, which is equivalent to $^pj_! S_X\cong j_{!*}S_X$ because $j^* {}^pj_!$ is also naturally equivalent to the identity functor. The last condition holds if and only if $^pj_! S_X$ is simple. This follows from the fact that there is always a natural epimorphism ${^pj_!}S_X\ra j_{!*}S_X$. The restriction to $\Ycal$ is the pair $(0, \Ycal)$ and the restriction to $\Xcal$ is the mutation torsion pair given by the torsion class $\Tcal_\Xcal=$ add$(S_X)$.
\item Suppose that $\Fcal=$ add$(j_{!*}S_X)$. By lemma \ref{restriting torsion pair rhs} the torsion pair $(\Tcal,\Fcal)$ is compatible with the recollement if and only if $^pj_* j^*(j_{!*}S_X)$ lies in $add(j_{!*}S_X)$. Since $j^*j_{!*}$ is naturally equivalent to the identity functor, this amounts to $^pj_* S_X$ lying in $add(j_{!*}S_X)$, which is equivalent to $^pj_* S_X\cong j_{!*}S_X$ because $j^* {}^pj_*$ is also naturally equivalent to the identity functor. The last condition holds if and only if $^pj_* S_X$ is simple. This follows from the fact that there is always a natural monomorphism $j_{!*}S_X\ra {^pj_*}S_X$. The restriction to $\Ycal$ is the pair $(\Ycal,0)$ and the restriction to $\Xcal$ is the mutation torsion pair given by the torsion-free class $\Fcal_\Xcal=$ add$(S_X)$.
\end{enumerate}
The proof is complete.
\end{proof}

Suppose now that $\Dcal$ has a Serre functor. We finish this section with an observation on the compatibility of irreducible silting mutation with glueing (via the recollement $\Rcal_U$, since our focus is on t-structures rather than co-t-structures). 
Note that the items (3) and (4) of the proposition above rarely occur, because, in general,  neither $^pj_* S_X$ nor $^pj_!S_Y$ are simple in $\Acal_\Dcal$. Translating items (1) and (2) of the proposition in terms of irreducible silting mutation, we get the following corollary, also appearing in \cite{AiharaIyama12} (lemma 2.40).

\begin{corollary}
Let $Y$ and $X$ be the silting objects associated to the fixed bounded t-structures in $\Ycal$ and $\Xcal$, respectively. Suppose $Y'$ is an indecomposable direct summand of $Y$. Let $Z$ be the glued silting object with respect to the recollement $\Rcal_U$ (i.e., compatible with the glueing of t-structures via $\Rcal$) of $Y$ and $X$. Then there is an indecomposable direct summand $Z'$ of $K_Y$ such that the glued silting (with respect to $\Rcal_U$) of $\mu^+_{Y'}Y$ (respectively, $\mu^-_{Y'}Y$) and $X$ is precisely $\mu^+_{Z'}Z$ (respectively, $\mu^-_{Z'}Z$).
\end{corollary}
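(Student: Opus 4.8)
The plan is to reduce the statement to a compatibility check at the level of torsion pairs in the glued heart, invoking the machinery already built in this section. Recall that by Theorem \ref{mutations commute} the irreducible silting mutation $\mu^+_{Y'}Y$ corresponds to the HRS-tilt of the fixed t-structure $(\Ycal^{\leq 0},\Ycal^{\geq 0})$ at the mutation torsion pair $(\mathrm{add}(S_{Y'}),S_{Y'}^\perp)$ in $\Acal_\Ycal$, where $S_{Y'}$ is the simple object of $\Acal_\Ycal$ with $\Hom(Y',S_{Y'})\neq 0$; and similarly $\mu^-_{Y'}Y$ corresponds to the HRS-tilt at $({}^\perp S_{Y'},\mathrm{add}(S_{Y'}))$. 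The first step is to glue these HRS-tilted t-structures on $\Ycal$ (keeping $X$, i.e. the trivial torsion pair $(0,\Acal_\Xcal)$ for the right mutation case, or $(\Acal_\Xcal,0)$ — one checks which — on $\Xcal$) along $\Rcal_U$. By Proposition \ref{p:glueing-torsion-pair}, this glued t-structure is itself an HRS-tilt of the glued t-structure $(\Dcal^{\leq 0},\Dcal^{\geq 0})$ (for $\Rcal_U$) at the torsion pair $(\Tcal,\Fcal)$ whose restrictions are the chosen torsion pairs on the two sides; and by the previous proposition (its case (2), since we are mutating a summand of $Y$, which lands on the $i_*$-side), $(\Tcal,\Fcal)$ is a mutation torsion pair in $\Acal_\Dcal$ with $\Fcal = \mathrm{add}({}^p i_* S_{Y'})$ — so $S:={}^p i_* S_{Y'}$ is simple in $\Acal_\Dcal$.

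The second step is to translate back to silting language via Theorem \ref{mutations commute}, now applied in $\Dcal$: the HRS-tilt of $(\Dcal^{\leq 0},\Dcal^{\geq 0})$ (for $\Rcal_U$) at the mutation torsion pair with torsion-free class $\mathrm{add}(S)$ is exactly $\mu^+_{Z'}Z$, where $Z'$ is the indecomposable Ext-projective summand of the silting object $Z$ associated to $(\Dcal^{\leq 0},\Dcal^{\geq 0})$ satisfying $\Hom(Z',S)\neq 0$. The remaining task is then the bookkeeping identification that this $Z'$ is a summand of $K_Y$ rather than of $j_!X$. For this I would argue that $\Hom_\Dcal(j_!X, S)=\Hom_\Dcal(j_!X,\,{}^p i_* S_{Y'})$ vanishes — because $j^*$ kills $i_*$-objects and $j_!X$ is, up to the cohomological truncation, controlled by $j^*$ — whereas $\Hom(K_Y,S)\neq 0$; concretely $i^*$ (or $i^!$, via $\Rcal_U$) applied to $K_Y$ recovers $Y$ (this is exactly the content of the analogue of Proposition \ref{characterising glued silting}(3)(i) for Corollary \ref{glue for t-str}), so some indecomposable summand $Z'$ of $K_Y$ maps onto $S$. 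Since $Z = j_!X\oplus K_Y$ and $Z'$ is the unique (up to the equivalence $\mathrm{add}$) Ext-projective summand with $\Hom(Z',S)\neq 0$, it must lie in $K_Y$.

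Putting the two steps together: the glued silting of $\mu^+_{Y'}Y$ and $X$ along $\Rcal_U$ corresponds to the glued HRS-tilt, which is the HRS-tilt of the glued t-structure at $(\Tcal,\Fcal)$, which corresponds to $\mu^+_{Z'}Z$; and the left-mutation case is identical with $(\mathrm{add}(S_{Y'}),S_{Y'}^\perp)$ replaced by $({}^\perp S_{Y'},\mathrm{add}(S_{Y'}))$ throughout, using case (1) of the previous proposition and the $\mu^-$ half of Theorem \ref{mutations commute}. The main obstacle I anticipate is not any single hard lemma but rather keeping all the reflections straight: one must verify carefully that the side on which $Y'$ sits really is the $i_*$-side for $\Rcal_U$ (so that case (1)/(2) of the previous proposition applies, not case (3)/(4)), and that the identification "$Z'$ is a summand of $K_Y$" survives the passage through $\Rcal_U$; once the diagram chase pinning down $i^* K_Y$ (resp. $i^! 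K_Y$) is in hand, the rest is a direct concatenation of the quoted results.
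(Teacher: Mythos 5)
Your argument is correct and is essentially the route the paper intends: the corollary is presented as a direct translation of items (1) and (2) of the preceding proposition on mutation torsion pairs, combined with Theorem \ref{mutations commute}, Proposition \ref{p:glueing-torsion-pair} and the identification $Z=j_!X\oplus K_Y$ from Corollary \ref{glue for t-str}, and your observation that $\Hom_\Dcal(j_!X,i_*S_{Y'})\cong\Hom_\Xcal(X,j^*i_*S_{Y'})=0$ correctly pins $Z'$ down as a summand of $K_Y$. The only slip is a harmless one you already flag yourself: you momentarily pair case (2) of the proposition (torsion-free class $\mathrm{add}({}^pi_*S_{Y'})$) with $\mu^+$, whereas the correct matching is $\mu^+$ with case (1) (torsion class $\mathrm{add}({}^pi_*S_{Y'})$, trivial pair $(0,\Acal_\Xcal)$ on the $\Xcal$-side) and $\mu^-$ with case (2).
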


\def\cprime{$'$}
\providecommand{\bysame}{\leavevmode\hbox to3em{\hrulefill}\thinspace}
\providecommand{\MR}{\relax\ifhmode\unskip\space\fi MR }
\providecommand{\MRhref}[2]{
  \href{http://www.ams.org/mathscinet-getitem?mr=#1}{#2}
}
\providecommand{\href}[2]{#2}

\end{document}